\numberwithin{equation}{section}
\newtheorem{lemma}{Lemma}[section]
\newtheorem{thm}[lemma]{Theorem}
\newtheorem{cor}[lemma]{Corollary}
\theoremstyle{definition}
\newtheorem{rmk}[lemma]{\bf Remark}
\newtheorem{defn}[lemma]{\bf Definition}
\newcommand{\N}{\mathbb{N}}
\newcommand{\Z}{\mathbb{Z}}
\newcommand{\R}{\mathbb{R}}
\newcommand{\ra}{\rightarrow}
\newcommand{\da}{\downarrow}
\newcommand{\wra}{\rightharpoonup}
\newcommand{\HD}{\mathcal{H}}
\newcommand{\norm}[1]{\left \lVert #1 \right \rVert}
\newcommand{\abs}[1]{\left \lvert #1 \right \rvert}
\renewcommand{\epsilon}{\varepsilon}
\newcommand{\HDE}{\HD^d|_E}
\newcommand{\dist}{\mathrm{dist \, }}
\newcommand{\diam}{\mathrm{diam \,  }}
\newcommand{\family}{\mathcal{F}}
\newcommand{\spt}{\mathrm{spt \, }}
\newcommand{\pushm}[2]{{#1}_\# {#2}}
\newcommand{\cmu}{C_\mu}
\newcommand{\cd}{C_D}
\newcommand{\osc}{\mathcal{O}}
\newcommand{\mubar}{{\tilde{\mu}}}
\DeclareMathOperator{\esssup}{ess \, sup}
\DeclareMathOperator{\essinf}{ess \, inf}
\DeclareMathOperator{\osct}{osc }
\DeclareMathOperator{\gen}{gen }
\DeclareMathOperator{\BMO}{BMO}
\title{Small-Constant Uniform Rectifiability}
\author{Cole Jeznach}
\thanks{C. Jeznach was partially supported by the Simons Collaborations in MPS grant 563916, and NSF DMS grant 2000288. The author would like to thank his advisors Max Engelstein and Svitlana Mayboroda, as well as Guy David for many helpful conversations regarding the main result. The author also thanks the referees for many helpful comments which improved the paper.
}
\subjclass[2020]{Primary: 28A75}
\keywords{uniform rectifiability, square function estimates, chord-arc surfaces} 
\address{School of Mathematics, University of Minnesota, Minneapolis, MN, 55455, USA.}
\email{jezna001@umn.edu} 
\date{\today}
\begin{document}

\begin{abstract}
We provide several equivalent characterizations of locally flat, $d$-Ahlfors regular, uniformly rectifiable sets $E$ in $\R^n$ with density close to $1$ for any dimension $d \in \N$, $1 \le d < n$.
In particular, we show that when $E$ is Reifenberg flat with small constant and has Ahlfors regularity constant close to $1$, then the Tolsa $\alpha$ coefficients associated to $E$ satisfy a small-constant Carleson measure estimate.
This estimate is new, even when $d= n-1$, and gives a new characterization of chord-arc domains with small constant.
\end{abstract}

\maketitle
\tableofcontents

\section{Introduction}
The connection between quantitative properties of elliptic PDEs, harmonic analysis, and geometric measure in the past thirty years has significantly been influenced by the introduction of uniformly rectifiable sets by David and Semmes in the early 90s. At its core, uniform rectifiability of a $d$-dimensional set $E \subset \R^n$ is \textit{the} precise condition on $E$ which guarantees all (sufficiently nice) Calder\'{o}n-Zygmund operators are $L^2$ bounded \cite{DSSIO}. In terms of elliptic boundary value problems, it turns out that uniform rectifiability of the boundary $\partial \Omega$ of a domain $\Omega$ arises naturally as one of the sharp geometric conditions under which one can solve the Laplace-Dirichlet problem on $\Omega$ with singular (i.e., $L^p$) boundary data (see \cite{AHMMT20} for a recent result, but also the series of works \cite{HM14, HMUT14, HMMM14}). At their core, though, uniformly rectifiable sets have many equivalent geometric characterizations, all of which quantify (in some sense) the $d$-rectifiability of $E$ at different points $x \in E$ and scales $r >0$. 

Just to list two such examples, a $d$-Ahlfors regular set $E \subset \R^n$ is $d$-uniformly rectifiable if and only if the Tolsa $\alpha_{\HD^d|_E}$ numbers satisfy the Carleson measure estimate \cite{TOLSAALPHA}
\begin{align}\label{eqn:tolsa_alpha}
\sup_{x \in E, \, r > 0} r^{-d} \int_0^r \int_{E \cap B(x,r)} \alpha_{\HD^d|_E}(y,t)^2 \; \dfrac{d\HD^d(y) dt}{t} \le M_1,
\end{align}
for some uniform $M_1 > 0$. Here the $\alpha_{\HD^d|_E}(x,r)$ are bounded coefficients which measure the distance from $E$ to the space of $d$-planes in the ball $B(x, r)$ (see Definition \ref{defn:alpha}), and so the estimate \eqref{eqn:tolsa_alpha} says that for most balls $B(x,r)$ centered on $E$, this distance is quantitatively small in a precise sense.
Of course, in the estimate above, one could take different coefficients (e.g., the so-called $L^1$ beta coefficients, $\beta_1$) and still obtain a characterization \cite{DSSIO}. In terms of a slightly more concrete definition, it turns out that \eqref{eqn:tolsa_alpha} is equivalent to $E$ having ``big pieces of Lipschitz images of subsets of $\R^d$'', which is to say the following: there is some uniform $M_2> 0$ so that for each $x \in E$ and every $r >0$, one can find a Lipschitz mapping $\rho: B_d(0, r) \subset \R^d \ra \R^n$ with Lipschitz norm $\le 1 + M_2$ so that 
\begin{align}\label{eqn:bpbi}
\HD^d(E \cap B(x,r) \cap \rho(B_d(0,r))) \ge (1+M_2)^{-1} \HD^d(B_d(0,r)).
\end{align}
There are many other interesting geometric and analytic characterizations of uniformly rectifiable sets, and we refer the reader to \cite{DSSIO, DSUR} where this is pursued. The goal of the current paper is to take on a systematic study of the quantitative relationship between such constants $M_1$ and $M_2$ in the \textit{small-constant} regime: if $M_1$ is sufficiently small, does it mean that $M_2$ also is? If so, can such a relationship be made quantitative? 

In this paper, we show that this is indeed the case.
In fact, we show that the estimate \eqref{eqn:tolsa_alpha} with small constant $M_1$ (along with good Ahlfors regularity control) characterizes a certain class of Ahlfors regular sets $E \subset \R^n$ of any dimension $1 \le d \le n-1$ that have very good approximations by very flat Lipschitz graphs (Theorem \ref{thm:main}). This approximation property is even stronger than the ``big pieces of Lipschitz images of subsets of $\R^d$'' property mentioned above. We call such sets uniformly rectifiable of dimension $d$ with small constant $\delta >0$ (see Definition \ref{defn:delta_ur}).
Moreover our result is quantitative in that \eqref{eqn:tolsa_alpha} holds with $M_1 = \delta^\theta$ for some dimensional constant $\theta \in (0,1)$ depending only on $n$ and $d$ whenever $E \subset \R^n$ is uniformly rectifiable of dimension $d$ with small constant $\delta$, and a converse holds as well. 
This quantitative Carleson measure estimate serves as an important tool in the upcoming work in \cite{EJLM}, where the authors study the regularity of the Poisson kernel associated to a degenerate elliptic operator outside of Ahlfors regular sets of high co-dimension in $\R^n$.
In addition, our method of proof brings with it several other characterizations. 
In particular, we relate the constant $M_1$ to the control of the oscillation of the tangent planes to $E$ and the Reifenberg flatness of $E$.

These two other characterizations are largely motivated by the of work Semmes \cite{CASI, CASII} (and later Blatt \cite{BLATT}) on chord-arc surfaces with small constant as well as Kenig and Toro \cite{KT97, KT99} in their study of the Poisson kernel regularity for chord-arc domains with small (and vanishing) constant.
In particular, the work of Kenig and Toro showed that chord-arc domains with small constant in many ways serve as an appropriate substitute for $C^1$ domains in the study of boundary value problems for elliptic PDE below the Lipschitz thresh-hold.
It turns out that under a global assumption of Reifenberg flatness of a domain $\Omega$, the Poisson kernel $k$ associated to the Laplace operator on $\Omega$ satisfies $\log k \in \mathrm{VMO}(\partial \Omega)$ if and only if the domain $\Omega$ is a chord-arc domain with vanishing constant \cite{KT03}.
This result is the proper analogue (and converse) of the earlier result of Jerison and Kenig, which says that $\log k \in \mathrm{VMO}(\partial \Omega)$ for $C^1$ domains (though, in general $\log k$ need not be continuous or even bounded for such domains) \cite{JK82}.
Since then, chord-arc domains with small constant have continued to be an important geometric object in the study of quantitative properties of solutions to elliptic PDE on rough domains \cite{MT10, HMT10, MPT13, BO17, DLMAinfty, BTZ23}, free boundary problems for elliptic measure \cite{BH16, AMT17, BET20, PT20, BEGTZ22}, and even have corresponding analogues and importance in other PDE settings \cite{LN12, E17, MP21}, and we can only scratch the surface here on the plethora of theory devoted to the study of PDE on such domains.

Since chord-arc domains with small constant $\Omega$ have rich PDE properties, there has been much interest in understanding and providing alternative geometric characterizations of such domains.
Roughly speaking, these are domains whose boundaries locally separate space in two and whose boundaries are Ahlfors regular and bilaterally well-approximated by hyperplanes.
In addition these domains have unit normal with small BMO-norm (see Definition \ref{defn:cad} for a more precise statement) \cite{KT99}.
It is known that such domains also have good Lipschitz graph approximations, and thus their boundaries are closely related to uniformly rectifiable sets of dimension $(n-1)$ with small constant. In fact, when $\Omega$ is a domain satisfying some underlying topological assumptions, we shall use our results to give an alternative characterization of $\Omega$ being a chord-arc domain with small constant using the Carleson measure estimate \eqref{eqn:tolsa_alpha} on $\partial \Omega$ (see Theorem \ref{thm:cad}).

Before rigorously stating the main result, we remark that the relationship between some of the defining characteristics of chord-arc domains with small constant (such as Reifenberg flatness, oscillation of the unit normal, and Lipschitz graph approximations) has been studied and exists in the literature in varying contexts (in the co-dimension one case for chord-arc surfaces and chord-arc domains in \cite{CASI, KT03, HMT10}, and in any co-dimension for smooth embedded hypersurfaces \cite{BLATT}, for example). Still in our main result for uniformly rectifiable sets $E \subset \R^n$ of dimension $d$ and small constant $\delta >0$, we provide proofs that hold for general Ahlfors regular sets of any co-dimension, and we do not impose any topological assumptions on the set $\R^n \setminus E$ apriori. In any case, the characterization in terms of the small constant Carleson measure estimate \eqref{eqn:tolsa_alpha} is new in any dimension and co-dimension. In addition, our techniques provide a systematic way to obtain small-constant Carleson measure estimates such as \eqref{eqn:tolsa_alpha} for coefficients besides the Tolsa $\alpha$ numbers for small constant uniformly rectifiable sets, which we hope to prove useful for small-constant PDE results in the future. Let us now provide enough background to state the main result, Theorem \ref{thm:main}.

\subsection{Main result and outline of the paper} 

In this paper, we always denote the ambient space by $\R^n$, for $n \in \N$, and $d \in \N$ will always be so that $0 < d < n$.
We reserve the notation $A(n,d)$ to denote the collection of all $d$-planes $P \subset \R^n$, and $G(n,d)$ for the Grassmannian of $d$-dimensional subspaces of $\R^n$.
Also, we denote by $\HD^d$ the $d$-dimensional Hausdorff measure on $\R^n$, normalized for notational convenience so that if $P \in A(n,d)$, $x \in P$, and $r >0$, then $\HD^d(B(x,r) \cap P) = r^d$.
Lastly, whenever $P$ is a plane, we denote by $\pi_P: \R^n \ra P$ the orthogonal projection onto the plane $P$.
Let us begin by introducing several related notions of $d$-dimensional sets in $\R^n$ and their geometric regularity that are needed to state our main result.

\begin{defn}\label{defn:ahl_reg}
A Borel measure $\mu$ on $\R^n$ is said to be $d$-Ahlfors regular with constant $\cmu >0$ provided that for each $x \in \spt \mu$ and each $r >0$, one has 
\begin{align} \label{eqn:ahl_reg}
\cmu^{-1} r^d \le \mu (B(x,r)) \le \cmu r^d.
\end{align}
If $E \subset \R^n$ is closed, we say that $E$ is $d$-Ahlfors regular with constant $C_E >0$ if $\HD^d|_E$ is $d$-Ahlfors regular with constant $C_E >0$. Finally, if only the upper (lower) bound holds as above, then we say $\mu$ is upper (lower) $d$-Ahlfors regular with constant $\cmu$. 
\end{defn}

\begin{rmk}\label{rmk:adr_m}
The choice to normalize $\HD^d$ as above, and the role of the constant $\cmu$ in Definition \ref{defn:ahl_reg} is important, since we shall often want to measure how close a $d$-Ahlfors regular measure $\mu$ is to $d$-dimensional surface measure on $\spt \mu$. In particular, we shall often use the phrase ``$d$-Ahlfors regular with small constant'' when the constant $C_\mu > 1$ is very close to $1$, even though the phrase is misleading.
\end{rmk}

Next we introduce Jones' $\beta$ numbers (see \cite{J90, DSSIO}) and Tolsa's $\alpha$ numbers (see \cite{TOLSAALPHA}), which have been studied extensively in relation to rectifiable and uniformly rectifiable measures on $\R^n$ and singular integral operators.
We also introduce the notion of Reifenberg flat sets, which were introduced by Reifenberg in his solution of the Plateu problem \cite{REIF}.

\begin{defn}\label{defn:l1_beta}
If $E$ is a $d$-Ahlfors regular set, then define for $x \in \R^n$ and $r >0$,
\begin{align*}
b\beta_{1, E}(x,r) & : = r^{-d-1} \inf_{P \in A(n,d)}\left( \int_{B(x,r)} \dist(y, P) \; d\HD^d|_E(y) + \int_{B(x,r)} \dist(y,E) \; d \HD^d|_P(y) \right).
\end{align*}
\end{defn}

\begin{defn}\label{defn:wasserstein}
For $\Omega \subset \R^n$ open, denote by $\Lambda(\Omega)$ the space of $1$-Lipschitz functions $f: \R^n \ra \R$ that are compactly supported in $\Omega$. If $\mu$ and $\nu$ are measures on $\R^n$, then we define the localized Wasserstein distance between $\mu$ and $\nu$ in $B(x,r) \subset \R^n$ by 
\begin{align*}
\mathcal{D}_{x,r}(\mu, \nu) \coloneqq \sup_{f \in \Lambda(B(x,r))} \abs{ \int f (d \mu - d\nu) }. 
\end{align*} 
\end{defn}
	
\begin{defn}\label{defn:alpha}
Denote by $\mathrm{Flat}(n,d)$ the set of measures of the form $c \HD^d|_P$ where $c > 0$ and $P \in A(n,d)$. 
If $\mu$ is a $d$-Ahlfors regular measure, then define for $x \in \R^n$ and $r >0$, 
\begin{align*}
\alpha_{\mu}(x,r) & : = r^{-d-1} \inf_{\nu \in \mathrm{Flat}(n,d)}  \mathcal{D}_{x,r}(\mu, \nu).
\end{align*}
In our notation $\alpha_{\mu}$, we omit the dependence on the dimension $d$ of the measure $\mu$, since it shall be clear from context.
\end{defn}

\begin{defn} \label{defn:hd_dist}
We define the normalized local Hausdorff distance for closed sets $E, F \subset \R^n$ that meet $\overline{B(x,r)}$ by 
\begin{align*}
d_{x,r}(E, F) & : = r^{-1} \left( \sup_{y \in E \cap \overline{B(x,r)}} \dist(y, F) + \sup_{y \in F  \cap \overline{B(x,r)}} \dist(y, E) \right).
\end{align*}
With this distance, we define the bilateral beta (infinity) numbers by 
\begin{align*}
b\beta_{\infty, E}(x,r) & : = \inf_P d_{x,r}(E, P),
\end{align*}
where the infimum is taken over all $d$-planes $P \in A(n,d)$ that meet $\overline{B(x,r)}$. Moreover, we say that a closed set $E \subset \R^n$ is $\delta$-Reifenberg flat if $b \beta_{\infty, E}(x,r) \le \delta$ for every $x \in E$ and $r >0$. We warn the reader that this definition of $\delta$-Reifenberg flatness is different than that in \cite{KT99}.
\end{defn}

Finally, we come to the notion of small-constant uniformly rectifiable sets.
\begin{defn} \label{defn:delta_ur}
A closed set $E \subset \R^n$ is $\delta$-uniformly rectifiable of dimension $d$ ($\delta$-UR for short) if $0 < \delta < 1/10$ and the following holds:
\begingroup
\addtolength{\jot}{0.5em}
\begin{align}
& \text{ \parbox[t]{0.8 \textwidth} { for every $x \in E$ and $r >0$, there is a $d$-dimensional Lipschitz graph $\Gamma$ with constant $\le \delta$ so that $\HD^d|_{B(x,r)}(E \Delta \Gamma) \le \delta r^d$ and $\Gamma \cap B(x, r/2) \ne \emptyset$.}}  
\end{align}
\endgroup
Again we usually omit the dimension $d$ since it will be clear from context.
\end{defn}

\begin{rmk} 
In the definition of $\delta$-UR we \textit{impose} that $\delta < 1/10$. This is because if $\delta$ were allowed to be large, the definition would be satisfied for any $d$-Ahlfors regular set $E$, whereas we want the $\delta$-UR condition to be some small-constant quantification uniform rectifiability.
In particular, since $\delta < 1/10$, it is straight-forward to verify that $\delta$-UR sets are $d$-Ahlfors regular with constant close to $1$ (see Lemma \ref{lemma:delta_ur_adr}). Moreover, they satisfy the ``big pieces of Lipschitz graphs condition'' and so $\delta$-UR sets are $d$-uniformly rectifiable as in the sense of David and Semmes \cite{DSUR} with bounded constant (see Definition \ref{defn:ur}).
From the previous discussion it follows that $\delta$-UR sets are $d$-rectifiable, so they have approximate tangent planes for $\HD^d$-almost all $x \in E$ (see Theorem \ref{thm:maggi_blowup}), which we denote by $T(x) \in G(n,d)$.

Notice also that the $\delta$-UR condition is strictly stronger than the ``big pieces of Lipschitz images of subsets of $\R^d$'' condition mentioned in \eqref{eqn:bpbi} with small constant $M_2$. Indeed, if $E = V_1 \cup V_2$ where $V_1$ and $V_2$ are distinct $d$-planes in $\R^n$, then one can check that $E$ is $d$-Ahlfors regular and satisfies \eqref{eqn:bpbi} with $M_2 = 0$ but is not $\delta$-UR of dimension $d$ for $\delta$ small.
\end{rmk}

In the language of the above, our main result is that a set $E \subset \R^n$ is $\delta$-UR of dimension $d$ if and only if one of various other quantities is sufficiently small (with quantitative control).
 We refer the reader to Definition \ref{defn:corona} for the precise definition of a $\delta$-Corona decomposition, which is somewhat cumbersome to place here without first discussing the Christ-David dyadic lattice in Section \ref{subsec:lattice}.

\begin{thm} \label{thm:main}
Fix $n, d \in \N$ with $0 < d <n$ and $C_E >0$. Then there are constants $\delta_0 >0$ and $\theta_0 \in (0,1)$ depending only on $n$, $d$, and $C_E >0$, so that the following holds.
Whenever $ 0 < \delta < \delta_0$, $E \subset \R^n$ is $d$-Ahlfors regular with constant $C_E$, and one of the following conditions hold
\smallskip
\begin{enumerate}[(A)]
\itemsep 0.5em
\item $E$ is $\delta$-uniformly rectifiable, \label{cond:delta_ur}
\item $E$ admits $\delta$-Corona decompositions, \label{cond:delta_cor}
\item $E$ is upper $d$-Ahlfors regular with constant $(1+\delta)$, and for any Borel $g$ satisfying $(1 + \delta)^{-1} \le g \le (1 + \delta)$, if $d\mu(x) = g(x) d\HD^d|_E(x)$, then for all $x \in E$ and $r >0$,\[ \mu(B(x,r))^{-1} \int_{B(x,r)} \int_0^r \alpha_\mu(y,t)^2 \;  \dfrac{d\mu(y)dt}{t} \le \delta, \] \label{cond:alpha_c}
\item $E$ is upper $d$-Ahlfors regular with constant $(1+\delta)$, and for all $x \in E$ and $r >0$, $b\beta_{1, E}(x,r) \le \delta$,
\label{cond:beta_1}
\item $E$ is upper $d$-Ahlfors regular with constant $(1+\delta)$ and $\delta$-Reifenberg flat, \label{cond:reif}
\item $E$ is $d$-rectifiable, lower $d$-Ahlfors regular with constant $(1+\delta)$, and for every $x \in E$ and $r >0$, there is a $V \in G(n,d)$ so that \[ \fint_{B(x,r)} \norm{ \pi_{T(x)} - \pi_V } \; d\HD^d|_E(x)  + \sup_{y \in B(x,r) \cap E} \dfrac{\abs{\pi_{V^\perp}(y-x)}}{r} \le \delta , \] \label{cond:bmo}
\end{enumerate}
then all of the others also hold with constant $\delta^{\theta_0}$ in place of $\delta$.
\end{thm}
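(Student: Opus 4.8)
Below is the strategy I would follow; it is a plan rather than a completed argument.

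The plan is to deduce the equivalence from a web of \emph{quantitative} implications, each of the form ``if one condition holds with parameter $\delta$ then another holds with parameter $C\delta^{\theta}$'' for constants $C=C(n,d,C_E)$ and $\theta=\theta(n,d,C_E)\in(0,1)$. Since the web connects all six conditions through boundedly many steps, composing the estimates yields a single $\theta_0=\theta_0(n,d,C_E)\in(0,1)$, and after shrinking $\delta_0$ so that in addition $C\delta_0^{\theta_0}<1/10$ (so that the output is a legitimate $\delta^{\theta_0}$-UR condition) and all the ``$\delta$ small'' provisos hold, the theorem follows. Concretely, I would prove: \ref{cond:delta_ur}$\Leftrightarrow$\ref{cond:delta_cor}; \ref{cond:delta_cor}$\Rightarrow$\ref{cond:alpha_c}; \ref{cond:alpha_c}$\Rightarrow$\ref{cond:reif}; \ref{cond:reif}$\Rightarrow$\ref{cond:delta_cor}; \ref{cond:reif}$\Leftrightarrow$\ref{cond:beta_1}; and \ref{cond:reif}$\Leftrightarrow$\ref{cond:bmo}. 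Thus Reifenberg flatness \ref{cond:reif} is the hub on the ``flat'' side, the $\delta$-Corona decomposition (Definition~\ref{defn:corona}) is the bridge, and $\delta$-UR anchors the ``rectifiable'' side; the main obstacle is \ref{cond:reif}$\Rightarrow$\ref{cond:delta_cor}.

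The implications among the flat conditions are elementary given that $E$ is a priori $C_E$-Ahlfors regular. For \ref{cond:reif}$\Leftrightarrow$\ref{cond:beta_1}: a near-optimal plane for $b\beta_{\infty,E}(x,r)$ is, after pairing the pointwise distance bound with the mass bounds on $B(x,r)$, near-optimal for $b\beta_{1,E}(x,r)$; conversely a point of $E$ (resp.\ of a competitor plane $P$) at distance $\ge\epsilon r$ from $P$ (resp.\ from $E$) forces, via lower $C_E$-regularity (resp.\ the normalization of $\HD^d|_P$), a contribution $\gtrsim_{C_E}\epsilon^{d+1}r^{d+1}$ to the $b\beta_1$-functional, so $b\beta_{1,E}(x,r)\le\delta$ gives $b\beta_{\infty,E}(x,r)\lesssim\delta^{1/(d+1)}$; the upper-regularity clause is common to both. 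For \ref{cond:alpha_c}$\Rightarrow$\ref{cond:reif}, fix $x,r$, use the Carleson bound with $g\equiv1$ on $B(x,8r)$, and apply Chebyshev successively in $t$ and in $y$ to produce a scale $t_\ast\in(4r,8r)$ and a point $y_0\in B(x,r)$ with $\alpha_{\HD^d|_E}(y_0,t_\ast)\lesssim\delta^{1/4}$; testing the localized Wasserstein distance against Lipschitz bumps centered at hypothetical far-away points of $E$ or of the near-optimal plane $P_\ast$ (again using $C_E$-regularity to produce definite mass) upgrades smallness of $\alpha$ to smallness of $d_{y_0,t_\ast/2}(E,P_\ast)$, hence of $b\beta_{\infty,E}(x,r)$ (since $B(x,r)\subset B(y_0,t_\ast/2)$), the upper regularity being hypothesized. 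For \ref{cond:bmo}$\Rightarrow$\ref{cond:reif}: the thin-slab containment at \emph{every} scale is one half of Reifenberg flatness and, applied at the scale of any putative second sheet, rules it out; the complementary half (slab points close to $E$) and the upgrade of lower to upper regularity both follow from a projection argument, since a $d$-rectifiable $E$ whose tangent planes are $\HD^d$-a.e.\ close on average to a fixed $V$ and which lies in a thin slab over $V$ projects to $V$ with Jacobian near $1$ and small multiplicity, so a hole in $E$ would leave a hole in $\pi_V(E)$ incompatible with the lower mass bound. Finally \ref{cond:reif}$\Rightarrow$\ref{cond:beta_1} and \ref{cond:reif}$\Rightarrow$\ref{cond:bmo} are immediate once \ref{cond:reif}$\Rightarrow$\ref{cond:delta_cor}$\Rightarrow$\ref{cond:delta_ur} is known, since a $\delta'$-UR set is $d$-rectifiable, two-sided Ahlfors regular with constant close to $1$ (Lemma~\ref{lemma:delta_ur_adr}, Theorem~\ref{thm:maggi_blowup}), and off a set of measure $\delta'r^d$ a $\delta'$-Lipschitz graph, whose approximate tangent planes lie within $\delta'$ of its base plane. (In \ref{cond:alpha_c} it suffices to treat $g\equiv1$: one has $\alpha_\mu(y,t)\le\alpha_{\HD^d|_E}(y,t)+C\norm{g-1}_{L^\infty}$, and the square function of the $\HD^d|_E$-averages of $g$ is a Carleson measure of norm $\lesssim\norm{g}_{\BMO}^2\lesssim\delta^2$, so the weighted estimate follows from the unweighted one up to a comparable constant.)

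On the rectifiable side, \ref{cond:delta_ur}$\Rightarrow$\ref{cond:delta_cor} is a soft gluing argument: the $\delta$-Lipschitz graphs furnished by $\delta$-UR at a point and at nearby scales each agree with $E$ on most of the relevant ball, hence agree with one another to within $\lesssim\delta^{\theta}$, so they organize into coherent stopping-time regions of the Christ--David lattice, the cubes where one must switch graphs forming a family of small Carleson norm --- a $\delta^{\theta}$-Corona decomposition. The implication \ref{cond:delta_cor}$\Rightarrow$\ref{cond:alpha_c} is the small-constant version of the corona-to-Carleson step of \cite{DSUR, TOLSAALPHA}: within each stopping-time region $E$ is approximated by a fixed Lipschitz graph of constant $\le\delta$, whose Tolsa $\alpha$-numbers are $\lesssim\delta$ and square-summable with constant $\lesssim\delta^2\ell(Q(S))^d$ by the standard square-function (Dorronsoro-type) estimate for Lipschitz graphs with small constant; summing over stopping regions --- whose top cubes, together with the bad cubes, pack with small norm in a $\delta$-Corona decomposition --- and adding the bad-cube contribution (bounded by $\lesssim_{C_E}\delta\,\ell(Q_0)^d$ since $\alpha$ is bounded) yields the unweighted Carleson estimate with constant $\lesssim\delta$, and then the weighted one as above. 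Conversely, the classical ``big Lipschitz graph from a stopping-time region'' construction turns a $\delta$-Corona decomposition into $\delta^{\theta}$-UR, all Lipschitz constants and graph ``speeds of change'' being $\le\delta$; this gives \ref{cond:delta_cor}$\Rightarrow$\ref{cond:delta_ur} and closes the cycle \ref{cond:delta_ur}$\Leftrightarrow$\ref{cond:delta_cor}$\Leftrightarrow$\ref{cond:alpha_c}$\Leftrightarrow$\ref{cond:reif}.

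It remains to prove the crux, \ref{cond:reif}$\Rightarrow$\ref{cond:delta_cor}, and this is where I expect the real work to lie. The key point is that $\delta$-Reifenberg flatness \emph{together with} upper Ahlfors regularity with constant $1+\delta$ upgrades the pointwise flatness to a \emph{small Carleson estimate for the bilateral beta numbers}, $\sum_{Q\subseteq R}b\beta_{\infty,E}(Q)^2\,\HD^d|_E(Q)\lesssim\delta^{\theta}\,\HD^d|_E(R)$. Morally this is because ``bending costs area'': where $E$ is, off a set of small measure, the graph of $\phi$ over a plane, one has $\HD^d|_E(B(x,r))-r^d\gtrsim\int\abs{\nabla\phi}^2-(\text{higher order})\approx\sum_{Q\subseteq B(x,r)}b\beta_{\infty,E}(Q)^2\ell(Q)^d$ by a Dorronsoro-type estimate, while the left-hand side is $\le\bigl((1+\delta)-1\bigr)r^d=\delta r^d$ by the upper regularity. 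The honest argument is a bootstrap/stopping-time iteration in the spirit of the Reifenberg-parametrization arguments for chord-arc surfaces with small constant \cite{CASI, CASII}: the area defect at scale $r$ is repeatedly spent to control the rotation of the approximating planes as one descends through scales, so that $E\cap B(x,r)$ is, off a set of measure $\lesssim\delta^{\theta}r^d$, a genuine Lipschitz graph of constant $\lesssim\delta^{\theta}$, the cubes where this fails being Carleson with norm $\lesssim\delta^{\theta}$; from this small bilateral Carleson norm together with the $\delta$-flatness, the David--Semmes corona construction, run with all errors small, produces a $\delta^{\theta}$-Corona decomposition. Reconciling a purely metric flatness hypothesis with a genuinely rectifiable, small-Lipschitz-constant conclusion in a quantitative way and in arbitrary codimension is the heart of the matter and the main obstacle.
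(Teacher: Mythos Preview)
Your overall scheme --- a web of quantitative implications, each costing a power $\theta$, composed into a single $\theta_0$ --- is exactly the paper's approach, and most of your individual links (\ref{cond:delta_ur}$\Leftrightarrow$\ref{cond:delta_cor}, \ref{cond:delta_cor}$\Rightarrow$\ref{cond:alpha_c}, \ref{cond:alpha_c}$\Rightarrow$\ref{cond:beta_1}$\Rightarrow$\ref{cond:reif}) match the paper's Sections~\ref{sec:ur_cor}--\ref{sec:cor_alpha} and the easy pointwise estimates. The genuine divergence is in how you close the loop. You make \ref{cond:reif}$\Rightarrow$\ref{cond:delta_cor} the crux and propose to obtain a small Carleson estimate on $b\beta_\infty^2$ from the ``area defect'' $(1+\delta)r^d-\HD^d(E\cap B(x,r))$ via a Dorronsoro/Reifenberg-parametrization bootstrap. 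The paper avoids this step entirely: it runs the cycle linearly as \ref{cond:reif}$\Rightarrow$\ref{cond:bmo}$\Rightarrow$\ref{cond:delta_ur}$\Rightarrow$\ref{cond:delta_cor}. The link \ref{cond:reif}$\Rightarrow$\ref{cond:bmo} (Section~\ref{sec:reif_gamma}) is a self-contained projection/density argument: one pushes $\HD^d|_E$ forward by $\pi_P$, shows that at points with $\norm{\pi_{T(x)}-\pi_P}>\epsilon$ the Radon--Nikodym lower density is $\ge(1-\epsilon)^{-1}$, and plays this against the $(1+\delta)$-upper regularity to bound the bad set. Then \ref{cond:bmo}$\Rightarrow$\ref{cond:delta_ur} (Section~\ref{sec:gamma_ur}) is the Semmes/Blatt Hardy--Littlewood maximal-function stopping time on $\norm{\pi_T-\pi_V}$. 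Both are short and elementary compared to a Reifenberg parametrization.

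Your proposed \ref{cond:reif}$\Rightarrow$\ref{cond:delta_cor} sketch has a gap as written: the Dorronsoro-type bound $\HD^d|_E(B)-r^d\gtrsim\sum b\beta^2\ell(Q)^d$ is a statement about Lipschitz graphs (or at least sets already known to be single-sheeted graphs over $V$), whereas at this stage $E$ is only Reifenberg flat and $(1+\delta)$-upper regular. Turning the area-defect heuristic into a rigorous Carleson estimate in arbitrary codimension without first producing the Lipschitz graph is essentially rebuilding the David--Toro parametrization with small-constant bookkeeping --- far more than is needed here. The paper's detour through \ref{cond:bmo} extracts exactly the tangent-plane oscillation needed to feed the maximal-function argument, and sidesteps the parametrization altogether.
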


\begin{rmk}[Sharpness of the Ahlfors regularity assumption]\label{rmk:adr_nec}
Let us discuss the sharpness of the small-constant Ahlfors regularity assumptions appearing in the conditions \ref{cond:delta_ur}-\ref{cond:bmo} in Theorem \ref{thm:main}. We reminder the reader that in the statement of Theorem \ref{thm:main} and the discussion that follows below, our sets $E \subset \R^n$ are always assumed to be $d$-Ahlfors regular with (large) constant $C_E> 1$.

First, in the places they appear in \ref{cond:beta_1}, \ref{cond:reif}, and \ref{cond:bmo}, they are necessary. We shall see shortly that \ref{cond:beta_1} and \ref{cond:reif} are easily seen to be equivalent. In \ref{cond:reif}, the upper Ahlfors regularity assumption can be seen to be necessary by example of a very flat snowflake, as in \cite{DTREIF}. The key point is that there are $\delta$-Reifenberg flat snowflakes for arbitrarily small $\delta$ that have infinite $\HD^d$ measure. Finite truncations of such constructions yield very flat $d$-Ahlfors regular sets $E$ with large constant $C_E \gg 1$, but for which small-constant Ahlfors regularity fails. By Lemma \ref{lemma:delta_ur_adr}, such sets are not $\delta^{\theta_0}$-UR. In \ref{cond:bmo}, one can see the lower $d$-Ahlfors regularity assumption is necessary by taking $E = V^+$ for some half $d$-plane $V^+$. Again, such an $E$ is $d$-Ahlfors regular with large constant and satisfies the other condition in \ref{cond:bmo} trivially with $\delta=0$, but is not $\delta^\theta$-UR.

This brings us to \ref{cond:alpha_c} which is more delicate. If one instead considers the measure $d\mu(x) = g(x) d\HD^d|_E(x)$ where $1/2\le g \le 2$, $g$ attains the values $1/2$ and $2$ somewhere, yet $\norm{g}_{\BMO} = \delta$, then in fact our arguments will show that still the Carleson condition 
\begin{align*}
\mu(B(x,r))^{-1} \int_{B(x,r)} \int_0^r \alpha_\mu(y,t)^2 \;  \dfrac{d\mu(y)dt}{t} \le \delta^{\theta_0},
\end{align*}
holds whenever $E$ is $\delta$-UR (see the proofs of Lemmas \ref{lemma:osc} and \ref{lemma:alpha_density}). On the other hand, $\mu$ is not $d$-Ahlfors regular with small-constant, and thus a small-constant Carleson condition on the coefficients $\alpha_\mu$ alone cannot imply small-constant Ahlfors regularity of the \textit{measure} $\mu$. This is not to say that the implication cannot hold for the measure $\HD^d|_E$, and indeed there is a subtle but important difference between $\mu$ and $\HD^d|_E$ in the $\alpha$ coefficients. At this stage we do not know whether the small-constant $d$-Ahlfors regularity assumption in \ref{cond:alpha_c} is necessary.
\end{rmk}

We prove Theorem \ref{thm:main} one step at a time, proving (in alphabetical order) each of the conditions \ref{cond:delta_ur}-\ref{cond:bmo} with constant $\delta$ implies the subsequent condition with constant $C_0 \delta^{\theta_0}$, where $C_0, \theta_0 > 0$ depend only on $n$, $d$, and $C_E$.
Instead of repeating this phrase over and over, we shall instead write ``\ref{cond:delta_ur} gives \ref{cond:delta_cor}'', when really we mean that \ref{cond:delta_ur} implies \ref{cond:delta_cor} with constant $C_0 \delta^{\theta_0}$ in place of $\delta$. 
By taking $\delta_0$ and $\theta_0$ even smaller, this is enough to prove the Theorem.
We do not explicitly compute $\theta_0$ in the proof of each implication, except for where there is a clear optimal power; instead, we care only that each condition is quantitatively controlled by the previous one. 

The bulk of our work (and our main contribution) is in showing \ref{cond:delta_ur} gives \ref{cond:delta_cor}, and \ref{cond:delta_cor} gives \ref{cond:alpha_c}, which are done in Sections \ref{sec:ur_cor} and \ref{sec:cor_alpha} respectively.
Here we should emphasize as stated previously that for (large-constant) Ahlfors regular, uniformly rectifiable measures $\mu$, one has the large-constant Carleson measure estimate
\begin{align*}
\sup_{x \in \spt \mu, r >0} \mu(B(x,r))^{-1} \int_{B(x,r)}\int_0^r \alpha_\mu(y,t)^2 \; \frac{d\mu(y)dt}{t} < \infty,
\end{align*}
(see \cite[Theorem 1.2]{TOLSAALPHA}). However, it takes delicate analysis to show that this quantity is small for $\delta$-UR sets (and in fact, necessitates the appropriate notion of a ``small-constant''-Corona decomposition, which we introduce here).

That \ref{cond:alpha_c} gives \ref{cond:beta_1} is immediate once one recalls the fact that for Ahlfors regular sets $E$, the Carleson measure estimate in \ref{cond:alpha_c} in fact implies that $\alpha_\mu(x,r)^2 \le C \delta$, and that the $\alpha_\mu$ dominate the $b\beta_{1, E}$ \cite[Lemma 3.2]{TOLSAALPHA}. Similarly that \ref{cond:beta_1} gives \ref{cond:reif} is a straight-forward estimate that uses the Lipschitz nature of the distance function and $d$-Ahlfors regularity of $E$ to show $b\beta_{\infty,E}(x,r) \le C b\beta_{1,E}(x,r)^{1/(d+1)}$. As such we omit the proofs.
We prove that \ref{cond:reif} gives \ref{cond:bmo} in Section  \ref{sec:reif_gamma} from an argument that estimates the portion of $E$ whose tangent planes make a large angle with a good approximation to $E$ in a ball $B(x,r)$. This argument is different than the proof using the Gauss-Green Theorem by Kenig and Toro in co-dimension $1$ (see Theorem 2.1 in \cite{KT97} and also the proof following (2.18) in \cite{BEGTZ22}), and in particular also works in any co-dimension.
Finally, the proof of \ref{cond:bmo} gives \ref{cond:delta_ur} exists in several forms in the literature. When $d =n-1$ and $E$ is a smooth enough hypersurface, the argument is due to Semmes \cite[Proposition 5.1]{CASI} (and the resulting approximating Lipschitz graphs are referred to as Semmes decompositions) and later used in \cite{KT97}. It is also proved under different topological assumptions of a domain $\Omega$ in \cite[Theorem 4.16]{HMT10}, where the hypothesis on the quantity $\abs{\pi_{V^\perp}(y-x)}/r$ is removed, and proved by other means. When $d < n -1$, this implication is essentially proved in \cite[Lemma 3.2]{BLATT} again when $E \subset \R^n$ is a $C^1$ manifold, but for the sake of completeness, we outline the proof of Blatt in Section \ref{sec:gamma_ur} to make clear the fact that in our setting (and with the Ahlfors regularity assumptions), the argument does not require $E$ to be a $C^1$ manifold.

\subsection{An application to chord-arc domains}\label{subsec:intro_cad}

Let us end the introduction with a discussion relating $\delta$-UR sets of dimension $(n-1)$ in $\R^n$, and $\delta$-chord-arc domains (as defined in \cite{KT99}), as promised earlier. All of the arguments involved in the proof of Theorem \ref{thm:main} are local, and thus there are corresponding local and ``vanishing'' results that follow from these arguments, though they are slightly technical to write down. In fact, these local results, which we leave to Section \ref{sec:local}, are in more direct analogy to the so-called $\delta$-chord-arc domains introduced by Kenig in Toro in \cite{KT97} and \cite{KT99}. Let us define these rigorously now. 

\begin{defn}\label{defn:sep}
A domain $\Omega \subset \R^n$ is said to satisfy the separation property if for each $K \subset \R^n$ compact, there is an $R_K >0$ so that for each $x \in \partial \Omega \cap K$ and each $r \in (0, R_K)$, there is a choice of $V \in A(n,n-1)$ and choice of normal vector $\vec{n}_V$ to $V$ so that $x \in V$, and
\begin{align*}
\mathcal{T}^+(r, x) =&  \{  y + t \vec{n}_V \in B(x,r) \; : \; y \in V, \; t > r/4\} \subset \Omega, \\
\mathcal{T}^-(r, x) = & \{  y + t \vec{n}_V \in B(x,r) \; : \; y \in V, \; t < r/4\} \subset \Omega^c.
\end{align*}
If $\Omega$ is unbounded, we assume also that $\partial \Omega$ divides $\R^n$ into two distinct, nonempty connected components.
\end{defn}

\begin{defn}\label{defn:rf_domain}
Let $\delta \in (0, \delta_n)$ for some small dimensional constant $\delta_n >0$. A domain $\Omega \subset \R^n$ is said to be a $\delta$-Reifenberg flat domain if for each $K \subset \R^n$ compact, there is an $R_K > 0$ so that for each $x \in \partial \Omega \cap K$ and each $r \in (0, R_K)$, $b\beta_{\infty, \partial \Omega}(x,r) \le \delta$. If $\Omega$ is unbounded, we assume also that 
\begin{align*}
\sup_{x \in \partial \Omega, \; r > 0 } b\beta_{\infty, \partial \Omega}(x,r) \le \delta_n.
\end{align*}
\end{defn}

\begin{defn}\label{defn:cad}
Let $\delta \in (0 ,\delta_n)$. A set of locally finite perimeter $\Omega \subset \R^n$ is called a $\delta$-chord-arc domain if $\Omega$ is a $\delta$-Reifenberg flat domain satisfying the separation property, $\partial \Omega$ is $(n-1)$-Ahlfors regular, and in addition the following holds. For each $K \subset \R^n$ compact, there is an $R_K >0$ so that for each $x \in \partial \Omega \cap K$,
\begin{align*}
\norm{\vec{n}}_*(B(x,R_K)) \le \delta.
\end{align*}
Here $\vec{n}(y)$ is the unit outer normal to $\partial \Omega$, $\vec{n}_{y,s} = \fint_{\partial \Omega \cap B(y,s) } \vec{n}(z) \; d\HD^{n-1}(z)$, and  
\begin{align*}
\norm{\vec{n}}_*(B(y,r)) = \sup_{0 < s \le r} \left( \fint_{\partial \Omega \cap B(y,r)} \abs{\vec{n}(z) - \vec{n}_{y,s}}^2 \; d\HD^{n-1}(z) \right)^{1/2}.
\end{align*} 
\end{defn}

In the terminology we have introduced thus far, there is no immediate containment between $\delta$-UR sets of dimension $(n-1)$ and boundaries of $\delta$-chord arc domains. This is because chord-arc domains satisfy topological separation conditions and are sets of locally finite perimeter, and because $\delta$-UR sets satisfy global flatness conditions, while $\delta$-chord-arc domains satisfy local ones.  However, these differences are minor, and the two notions are very closely related. In particular, equation (2.18) in \cite{BEGTZ22} says that for $\delta$-chord arc domains, one has $\abs{\langle \vec{n}_{x,r}, y-x \rangle } \le C \delta^{1/2} r$ for $y \in B(x,r)$ whenever $\norm{\vec{n}}_* (B(x,r)) \le \delta$. This implies that the second condition in \ref{cond:bmo} holds locally for $\delta$-chord arc domains. In addition one can prove local lower $(n-1)$-Ahlfors regularity of $\partial \Omega$ (with small constant) from the local Reifenberg flatness condition (see the proof of Theorem \ref{thm:reif_bmo}). Combining these with the fact that the proof of Theorem \ref{thm:main} is local, we see that when $\Omega$ is a $\delta$-chord-arc domain, then on compact sets for small enough scales, $\partial \Omega$ satisfies the $\delta^{\theta_0}$-UR conditions. This is made precise by the following Theorem, and in fact, as long as we assume some underlying conditions on a domain $\Omega$, we obtain a new characterization of $\delta$-chord-arc domains. For simplicity, we choose just one such condition coming from \ref{cond:delta_ur}-\ref{cond:bmo} to give the characterization, which we make as the following local definition.
\begin{defn}\label{defn:local_carl}
Let $\mu$ be $d$-Ahlfors regular. We say that $\mu$ satisfies the local $\delta$-UR condition of dimension $d$ if for each $K \subset \R^n$ compact, there is an $R_K >0$ so that for each $x \in \spt \mu \, \cap K$ and $r \in (0,R_K)$ one has $\mu(B(x,r)) \le (1 + \delta)r^d$ and 
\begin{align*}
\mu(B(x,r))^{-1} \int_{B(x,r)}\int_0^r \alpha_{\mu}(y,t)^2 \; \dfrac{ d\mu(y) dt}{t} \le \delta.
\end{align*}
\end{defn}

\begin{thm}\label{thm:cad}
Fix $n \in \N$ and $C_E > 0$. Then there are constants $\delta_0, \theta_0 \in (0,1)$ depending only on $n$ and $C_E$ so that the following holds. 

Let $\Omega \subset \R^n$ be a set of locally finite perimeter such that $\Omega$ satisfies the separation property and $\partial \Omega$ is $(n-1)$-Ahlfors regular with constant $C_E$. If $\Omega$ is unbounded, assume in addition that $\sup_{x \in \partial \Omega, \; r > 0 } b\beta_{\infty, \partial \Omega}(x,r) \le \delta_0$. 
Then for any $\delta \in (0, \delta_0)$ each of the conditions 
\smallskip
\begin{enumerate}[(I)]
\itemsep 0.5em
\item $\Omega$ is a $\delta$-chord arc domain, \label{cond:cad}
\item for any measure $d\mu(x) = g(x) \HD^{n-1}|_{\partial \Omega}(x)$ with $(1 +\delta)^{-1} \le g \le 1 + \delta$, $\mu$ satisfies the local $\delta$-UR condition of dimension $(n-1)$, \label{cond:loc_carl}
\end{enumerate}
implies the other with constant $\delta^{\theta_0}$ in place of $\delta$. 
\end{thm}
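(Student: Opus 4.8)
The plan is to reduce this statement entirely to the machinery of Theorem \ref{thm:main}, applied locally, together with the known equivalences between the defining conditions of a $\delta$-chord-arc domain and the conditions \ref{cond:reif}, \ref{cond:bmo} of that theorem. Throughout, $E := \partial\Omega$, which is $(n-1)$-Ahlfors regular with constant $C_E$ by hypothesis. Since all the arguments in the proof of Theorem \ref{thm:main} are local (they only ever compare $E$ to planes inside a fixed ball $B(x,r)$ and produce Carleson-type estimates over Carleson boxes sitting inside that ball), they carry over verbatim to the ``localized on compact sets, at small scales'' formulation: if on each compact $K$ there is $R_K>0$ such that some condition among \ref{cond:delta_ur}--\ref{cond:bmo} holds for all $x\in E\cap K$, $r\in(0,R_K)$, then each of the others holds on $K$ at scales $(0, R_K')$ for a possibly smaller $R_K'$, with constant $\delta^{\theta_0}$. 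I would state this once as a lemma (the ``local version of Theorem \ref{thm:main}'') and then only invoke it.

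For the implication \ref{cond:cad} $\Rightarrow$ \ref{cond:loc_carl}: assume $\Omega$ is a $\delta$-chord-arc domain. First I would verify that $E=\partial\Omega$ satisfies the local version of condition \ref{cond:bmo} with a constant $\lesssim\delta^{1/2}$. The Reifenberg-flatness part of the chord-arc hypothesis directly gives the local $b\beta_{\infty,E}$ bound, hence (by the elementary implication that $\delta$-Reifenberg flatness plus the separation property yields local lower $(n-1)$-Ahlfors regularity with constant $1+C\delta$, as referenced in the proof of Theorem \ref{thm:reif_bmo}) the lower Ahlfors-regularity requirement in \ref{cond:bmo}. Next, the BMO-smallness of the unit normal, $\norm{\vec n}_*(B(x,R_K))\le\delta$, controls the averaged oscillation term $\fint_{B(x,r)}\norm{\pi_{T(x)}-\pi_V}\,d\HD^{n-1}$ with $V=\vec n_{x,r}^{\perp}$, since for a.e. $y$ the approximate tangent plane $T(y)$ is $\vec n(y)^{\perp}$; and equation (2.18) of \cite{BEGTZ22}, namely $\abs{\langle \vec n_{x,r}, y-x\rangle}\le C\delta^{1/2}r$ for $y\in B(x,r)$, controls the second term $\sup_{y\in B(x,r)\cap E}\abs{\pi_{V^\perp}(y-x)}/r$. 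Thus \ref{cond:bmo} holds locally with constant $C\delta^{1/2}$, and the local version of Theorem \ref{thm:main} then yields that $\HD^{n-1}|_E$ — and, by the remarks following the theorem (see Lemmas \ref{lemma:osc} and \ref{lemma:alpha_density}), also any $\mu=g\,\HD^{n-1}|_E$ with $(1+\delta)^{-1}\le g\le 1+\delta$ — satisfies the local $\delta^{\theta_0}$-UR condition, after adjusting $\theta_0$.

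For the converse \ref{cond:loc_carl} $\Rightarrow$ \ref{cond:cad}: apply the hypothesis with $g\equiv 1$, so $\HD^{n-1}|_E$ satisfies the local $\delta$-UR condition, i.e. the local version of \ref{cond:alpha_c}. Running the local chain of implications from Theorem \ref{thm:main} produces the local version of \ref{cond:reif} ($E$ is locally $\delta^{\theta_0}$-Reifenberg flat with upper Ahlfors regularity $1+\delta^{\theta_0}$), which together with the separation property that $\Omega$ is assumed to have gives exactly the $\delta^{\theta_0}$-Reifenberg flat domain condition (Definition \ref{defn:rf_domain}); the unbounded case is covered by the global $b\beta_\infty\le\delta_0$ assumption. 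It remains to produce the BMO bound on $\vec n$. For this I would use the local version of \ref{cond:bmo}, which the chain also delivers: it gives, on each compact $K$ and small scales, a plane $V=V_{x,r}$ with $\fint_{B(x,r)}\norm{\pi_{T(x)}-\pi_V}\,d\HD^{n-1}\le C\delta^{\theta_0}$ and, since $T(x)=\vec n(x)^\perp$ for a.e. $x$, this translates to $\fint_{B(x,r)}\abs{\vec n(x)-\nu_{x,r}}^2\,d\HD^{n-1}\lesssim\delta^{\theta_0}$ for the unit normal $\nu_{x,r}$ to $V_{x,r}$; a standard argument comparing $\nu_{x,r}$ to the actual average $\vec n_{x,s}$ over $s\le r$ (and absorbing the error, which is itself $\lesssim\delta^{\theta_0}$) yields $\norm{\vec n}_*(B(x,R_K))\lesssim\delta^{\theta_0/2}$. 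Finally, $(n-1)$-Ahlfors regularity of $\partial\Omega$ with constant $C_E$ is a standing assumption, and local finiteness of perimeter is assumed, so all clauses of Definition \ref{defn:cad} are met.

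The main obstacle I expect is not any single estimate but the bookkeeping of \emph{localization}: Theorem \ref{thm:main} is stated globally (all $x\in E$, all $r>0$) whereas the chord-arc conditions are ``on compact sets, below a scale $R_K$'', so I must be careful that each implication in the proof of Theorem \ref{thm:main} truly respects this — i.e. that to conclude a condition at $(x,r)$ one only uses hypotheses at points and scales controlled by $B(x,Cr)$ — and that passing through the full chain only shrinks $R_K$ by controlled factors. The cleanest way to handle this is to isolate, once and for all, a ``local Theorem \ref{thm:main}'' (which I would place in Section \ref{sec:local}) and then let the proof of Theorem \ref{thm:cad} be the short sequence of reductions above.
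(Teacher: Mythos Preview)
Your proposal is correct and follows essentially the same approach as the paper: isolate a local version of Theorem \ref{thm:main}, then for \ref{cond:cad} $\Rightarrow$ \ref{cond:loc_carl} feed the chord-arc hypotheses into the local form of condition \ref{cond:bmo} via \cite[(2.18)]{BEGTZ22} and the lower-regularity argument from Theorem \ref{thm:reif_bmo}, and for the converse run the local chain back through \ref{cond:reif} and \ref{cond:bmo}. The one detail to watch in your localization bookkeeping is that the implication \ref{cond:delta_ur} $\Rightarrow$ \ref{cond:delta_cor} uses information at scales up to $C\delta^{-\theta'}r$ rather than just $Cr$, so the admissible scale $R_K$ shrinks by a $\delta$-dependent (not merely constant) factor when passing through the full chain---the paper's local statement records this as $0<r<\delta\tau_0 r_0$.
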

For a discussion of the proof of Theorem \ref{thm:cad}, see Section \ref{sec:local}.

\section{Preliminary definitions} \label{sec:prelim}
We introduce the system of ``dyadic cubes'' for Ahlfors regular sets, which is an integral part of the definition of a $\delta$-Corona decomposition. They also play an important role in the square function estimates we prove on the Tolsa $\alpha$ coefficients in Theorem \ref{thm:corona_alpha}, since we opt to prove a dyadic version instead of the continuous one.
\subsection{The Christ-David dyadic lattice} \label{subsec:lattice} 
Recall that as in \cite{DWAVELETS}, if $E$ is a $d$-Ahlfors regular set in $\R^n$ with constant $C_E$, then one can construct a family of subsets of $E$ that plays an analogous role to the family of dyadic cubes in $\R^n$.
In particular, for each $j \in \Z$, there is a partition $\Delta_j$ of $E$ into ``dyadic cubes" of $E$ that satisfy the following:
\begingroup
\addtolength{\jot}{0.5em}
\begin{align}
& \text{ \parbox[t]{0.8 \textwidth}{ if $j \le k, Q \in \Delta_j,$ and $Q' \in \Delta_k$, then either $Q \cap Q' = \emptyset$ or $Q \subset Q',$} }  \label{cond:dyadic1} \\
& \text{ \parbox[t]{0.8 \textwidth} {if $Q \in \Delta_j$, then $\cd^{-1} 2^j \le \diam Q \le \cd 2^j$ and $\cd^{-1} 2^{jd} \le \HD^d(Q) \le \cd 2^{jd},$}}  \label{cond:dyadic2} \\
& \text{ \parbox[t]{0.8 \textwidth}{if $Q \in \Delta_j$ and $\tau > 0$ then $\HD^d \left( \left \{ x \in Q \; : \; \dist(x, E \setminus Q) \le \tau 2^j \right \} \right ) \le \cd \tau^{1/\cd} 2^{jd}.$}} \label{cond:dyadic3}
\end{align}
\endgroup
Remark that in (\ref{cond:dyadic1})-(\ref{cond:dyadic3}) above, the constant $\cd$ only depends on the dimensions $n, d$ and $C_E$.
Also, condition (\ref{cond:dyadic3}) for $\tau$ sufficiently small furnishes the existence of a ``center" of each cube $c_Q \in Q$, which satisfies 
\begin{align*}
\dist(c_Q, E \setminus Q) \ge \cd^{-1} \diam Q,
\end{align*}
so that 
\begin{align}
B(c_Q, \cd^{-1} \diam Q) \cap E \subset Q. \label{cond:dyadic4}
\end{align}

By convention, we define for $\lambda >1$,
\begin{align*}
\lambda Q = \{x \in E \; : \; \dist(x, Q) \le (\lambda - 1) \diam Q\}.
\end{align*}
In a similar manner, for any $Q$, we define $B_Q = B(c_Q, \diam Q)$ so that $B_Q$ is a ball centered on $E$ satisfying 
\begin{align*}
Q \subset B_Q \cap E \subset 2Q. 
\end{align*}
If $Q \subset Q'$, and $Q \in \Delta_j, Q' \in \Delta_{j+1}$ then $Q$ is said to be a child of $Q'$, and $Q'$ is said to be the parent of $Q$.
Similarly, if $R$ and $R'$ share a parent, then they are said to be siblings.
The set of all dyadic cubes of $E$ is $\Delta = \cup_j \Delta_j$, and for $R \in \Delta$, we denote all dyadic cubes contained in $R$ by $\Delta(R)$.
Finally, if $Q \in \Delta$ belongs to $\Delta_j$, we write $\gen Q = j$.

\subsection{Small-constant Corona Decompositions}
Let us define precisely $\delta$-Corona decompositions for $d$-Ahlfors regular sets.
We opt to make the definition as strong as possible, since we anticipate this will be the most useful property of small-constant UR sets from which one can obtain precise, quantitative, small constant square function estimates. Fix a constant $C_{n,d} > 1$ large, and to be determined below in the discussion of Remark \ref{rmk:corona}.

\begin{defn} \label{defn:corona}
Suppose that $E \subset \R^n$ is $d$-Ahlfors regular, and $E$ has a system of dyadic cubes $\Delta$ with constant $\cd \le C_{n,d}$. Then we say that $E$ admits a $\delta$-Corona decomposition in $R_0 \in \Delta$ if for each $R \in \Delta$ such that $\gen R = \gen R_0$ and $R \subset \delta^{-1} B_{R_0}$, there is a partition $\family(R)$ of $\Delta(R)$ which satisfies the following:
\begingroup
\addtolength{\jot}{0.5em}
\begin{align}
& \text{ \parbox[t]{0.8 \textwidth}{ each $S \in \family(R)$ has a maximal cube $Q(S)$ so that if $Q \in S$ and some $Q' \in \Delta(R)$ satisfies $Q \subset Q' \subset Q(S)$, then $Q' \in S$. Moreover, if $Q \in S$, then either all of its children, or none of its children are in $S$.} } \label{cond:corona_coherent}\\
& \text{ \parbox[t]{0.8 \textwidth} { for each $S \in \family(R)$ there is a $d$-dimensional Lipschitz graph $\Gamma= \Gamma(S)$ with Lipschitz constant $\le \delta$ so that for every $Q \in S$, $$ \HD^d(\delta^{-1} B_{Q(S)} \cap (E \Delta \Gamma)) \le \delta \HD^d(Q(S)).$$ Moreover,  for each $Q \in S$ and $x \in \delta^{-1} B_Q \cap ( E \cup \Gamma)$, we have $\dist(x, E) + \dist(x, \Gamma) \le \delta \diam Q$. (Here $E\Delta \Gamma = (E \setminus \Gamma) \cup (\Gamma \setminus E)$ is not to be confused with the symbol for dyadic lattices $\Delta = \cup_{j \in \Z} \Delta_j$).  }}   \label{cond:corona_graph} \\
& \text{ \parbox[t]{0.8 \textwidth} { the maximal cubes $Q(S)$ satisfy a small-constant Carleson packing condition in that for each $R' \in \Delta(R)$, one has $$\sum\limits_{\substack{S \in \family(R) \\ Q(S) \subset R'}}  \HD^d(Q(S)) \le (1 + \delta) \HD^d(R').$$ }}  \label{cond:corona_carleson} 
\end{align}
Moreover, we have the following condition on the ``top'' Lipschitz graphs of the Corona decomposition:
\begin{align}
& \text{ \parbox[t]{0.8 \textwidth}{ If $S_R, S_{R_0}$ are so that $R \in S_R \in \family(R)$ and $R_0 \in S_{R_0} \in \family(R_0)$, then $\Gamma(S_R) = \Gamma(S_{R_0})$. That is to say, each of the chosen Lipschitz graphs for the collections containing the top cubes $R$ are identical.} } \label{cond:corona_topgr}
\end{align}
\endgroup
Finally we say that $E$ admits $\delta$-Corona decompositions if it admits a $\delta$-Corona  decomposition in each $R_0 \in \Delta$. 
\end{defn}

\begin{rmk} \label{rmk:corona}
Some remarks about this definition (and how it is different from the usual Corona decomposition as in \cite{DSSIO}) are in order.
In general, a Corona decomposition for a uniformly rectifiable set $E$ includes a partition of dyadic cubes into ``good cubes" and ``bad cubes," where the bad cubes do not have a good approximating Lipschitz graph as in (\ref{cond:corona_graph}).
In the small-constant setting, it turns out that all cubes are ``good," and thus the main condition satisfied is that there are not too many families $S \in \family(R)$ as quantified by (\ref{cond:corona_carleson}).
Also, since we are interested in bi-lateral approximations of Ahlfors regular sets by planes, we include in (\ref{cond:corona_graph}) that the approximating graph $\Gamma$ be sufficiently close to $E$ as well.
The facts that there are measure estimates on $E \Delta \Gamma$ inside $\delta^{-1} B(Q(S))$ and that (\ref{cond:corona_topgr}) holds are perks we obtain for free when showing \ref{cond:delta_ur} gives \ref{cond:delta_cor}, which shall be useful to us in estimating the Tolsa $\alpha$ coefficients for $\delta$-UR sets. However, it should be noted that these measure estimates in condition \eqref{cond:corona_graph} are the strongest; for $\delta$ sufficiently small, the condition \eqref{cond:corona_graph} implies that $E$ is $C\delta$-UR (recall Definition \ref{defn:delta_ur}). Since we shall use this fact later, we provide a quick proof in Lemma \ref{lem:cor_ur} below.

One other main difference is that in a $\delta$-Corona decomposition, as opposed to a general one, we require that the Carleson packing constant appearing in (\ref{cond:corona_carleson}) be controlled as $\delta \da 0$.
This plays a crucial role in the arguments that follow, since this implies that if $R' \in \Delta(R)$ is a maximal cube in some family, $R' = Q(S)$ for some $S \in \family(R)$, then necessarily one has 
\begin{align}
\sum\limits_{\substack{S \in \family(R) \\ Q(S) \subsetneq R'}} \HD^d(Q(S)) \le \delta \HD^d(R'). \label{cond:corona_carleson_cor}
\end{align}

Also it is important to remark that the so-called ``coherent" condition on the families $\family(R)$ from (\ref{cond:corona_coherent}) includes two pieces.
The second part, which asserts that if $Q \in S$ then either all of its children are or none of its children are, has as an important consequence that 
\begin{align}
& \text{ \parbox[t]{0.8 \textwidth}{  if $x \in Q(S)$ then either $x$ is in arbitrarily small cubes of $S$, or $x$ is contained in a minimal cube of $S$. }} 
\end{align}

Finally this brings us to the appearance (and definition) of the constant $C_{n,d}$. Notice that by definition, a $\delta$-Corona decomposition of $E$ is assumed to hold over a dyadic system $\Delta$ with bounded constant $\cd \le C_{n,d}$ where $C_{n,d}$ is chosen as follows. We shall soon see (Lemma \ref{lemma:delta_ur_adr}) that $\delta$-UR sets of dimension $d$ in $\R^n$ are $d$-Ahlfors regular with uniformly bounded constant. In particular, they admit a system of dyadic cubes $\Delta$ as in Section \ref{subsec:lattice} with constant $\cd$ depending only $n, d$, which we define to be $C_{n,d}$. Forcing this condition on the system $\Delta$ is rather minor, but it allows us to rule out pathological examples of $\delta$-Corona decompositions for $\delta$-UR sets associated to a system of dyadic cubes with very large constant.
\end{rmk}

\begin{lemma}\label{lem:cor_ur}
Suppose that $E$ is $d$-Ahlfors regular and admits a system of dyadic cubes with constant $C_D >1$. Then if $E$ admits $\delta$-Corona decompositions for $0 < \delta < (2 C_D +1)^{-1}$ and $\delta$ sufficiently small, then $E$ is $(C_D \delta)$-UR.
\end{lemma}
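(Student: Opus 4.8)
The plan is to fix $R_0 \in \Delta$ and a point $x \in E$ with a scale $r > 0$, and to produce a single Lipschitz graph $\Gamma$ of constant $\le C_D \delta$ with $\HD^d|_{B(x,r)}(E \Delta \Gamma) \le C_D \delta \, r^d$ and $\Gamma \cap B(x, r/2) \ne \emptyset$. The idea is that the $\delta$-UR property is almost literally a special case of condition \eqref{cond:corona_graph}, provided one chooses the right cube and the right Corona decomposition. Concretely, I would first reduce to the case where $r$ is comparable to $\diam Q$ for some dyadic cube $Q$: choose $R_0 \in \Delta$ with $\gen R_0$ the integer $j$ for which $2^{j} \sim r$ (using \eqref{cond:dyadic2}), and then pick $Q \in \Delta_j$ with $x \in Q$, so that $Q \subset B_Q \cap E \subset 2Q$ and $\diam Q \sim r$ with constants depending only on $C_D$. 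Shrinking or enlarging $r$ by a bounded factor only changes the final constant, so it suffices to find the graph on a ball comparable to $B_Q$.

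Next I would invoke the $\delta$-Corona decomposition of $E$ in this $R_0$. Since $Q$ has the same generation as $R_0$ and $Q \subset \delta^{-1} B_{R_0}$ (because $\delta < (2C_D+1)^{-1}$ forces $\delta^{-1} > 2C_D + 1$, enough room for a cube at the same scale sitting near $R_0$ — here I would be slightly careful and if necessary take $R_0$ to be the cube containing $x$ itself so that $Q = R_0$, which trivially satisfies $R_0 \subset \delta^{-1} B_{R_0}$), the decomposition gives a partition $\family(R_0)$ of $\Delta(R_0)$. Let $S \in \family(R_0)$ be the family with $R_0 \in S$, so $Q(S) = R_0 = Q$, and let $\Gamma = \Gamma(S)$ be its Lipschitz graph, which has constant $\le \delta$. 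By \eqref{cond:corona_graph} applied to the cube $Q = Q(S) \in S$ itself, we get
\[
\HD^d(\delta^{-1} B_Q \cap (E \Delta \Gamma)) \le \delta \HD^d(Q) \le \delta C_D 2^{jd} \le C_D \delta \, r^d,
\]
after absorbing the comparability constants. Since $B(x, r) \subset B_Q$ (up to adjusting $r$ by a bounded factor, as $x \in Q \subset B_Q$ and $\diam Q \sim r$), and certainly $B(x,r) \subset \delta^{-1} B_Q$, this gives the measure estimate $\HD^d|_{B(x,r)}(E \Delta \Gamma) \le C_D \delta \, r^d$ after renaming the constant.

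Finally, the nonemptiness $\Gamma \cap B(x, r/2) \ne \emptyset$: the center $c_Q \in Q \subset E$ and the second part of \eqref{cond:corona_graph} (applied with $x = c_Q \in \delta^{-1} B_Q \cap E$) gives a point of $\Gamma$ within $\delta \diam Q$ of $c_Q$; since $\delta \diam Q \ll r/2$ for $\delta$ small and $\diam Q \lesssim r$, that point lies in $B(x, r/2)$. Collecting everything and taking $\delta$ small enough that $C_D \delta < 1/10$ and all comparability factors are swallowed, $E$ is $(C_D\delta)$-UR. The only real subtlety — and the step I'd watch most carefully — is the bookkeeping of which $R_0$ and which cube $Q$ to use so that the hypotheses "$\gen R = \gen R_0$" and "$R \subset \delta^{-1} B_{R_0}$" in Definition \ref{defn:corona} are genuinely met at every scale $r$; the cleanest route is to always take $R_0$ to be a dyadic cube containing $x$ at the scale $\sim r$, so that $Q = R_0$ and the containment is automatic, and then handle the mismatch between dyadic scales and the continuum of radii $r$ by the standard bounded-overlap adjustment. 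No genuinely hard estimate is needed beyond the Ahlfors-regularity comparisons already packaged in \eqref{cond:dyadic1}--\eqref{cond:dyadic4}.
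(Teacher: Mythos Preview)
Your approach is essentially the same as the paper's: choose the dyadic cube $R_0 \ni x$ at scale $2^j \le r \le 2^{j+1}$, take the top family $S$ with $Q(S) = R_0$, extract $\Gamma = \Gamma(S)$ from \eqref{cond:corona_graph}, and use $\delta^{-1} > 2C_D + 1$ to get the containment $B(x,r) \subset \delta^{-1} B_{R_0}$. One minor slip in your nonemptiness check: apply the second clause of \eqref{cond:corona_graph} at the point $x$ itself (giving $\dist(x,\Gamma) \le \delta \diam R_0 < r/2$) rather than at $c_Q$, since $|x - c_Q|$ can be as large as $\diam R_0 \sim r$ and so a point of $\Gamma$ within $\delta \diam Q$ of $c_Q$ need not lie in $B(x, r/2)$.
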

\begin{proof}
Fix $x \in E$ and $r >0$. Choose some integer $j \in \Z$ so that $2^j \le r \le 2^{j+1}$. Since $\Delta_j$ is a partition of $E$, we may choose a dyadic cube $R_0 \in \Delta_j$ so that $x \in R_0$. 

Now since $E$ admits $\delta$-Corona decompositions, then it admits a $\delta$-Corona decomposition in $R_0$, and thus there is a partition of $\Delta(R_0)$ into coherent subfamilies $S \in \mathcal{F}(R_0)$, $S \subset \Delta(R_0)$ satisfying conditions  \eqref{cond:corona_coherent}-\eqref{cond:corona_topgr}. Since the subfamilies $S \in \mathcal{F}(R_0)$ partition $\Delta(R_0)$ and $R_0 \in \Delta(R_0)$, there is some $S \in \mathcal{F}(R_0)$ so that $R_0 \in S$. Notice that the maximal cube $Q(S)$ of $S$ is a subset of $R_0$, but since $R_0 \in S$ then $R_0 \equiv Q(S)$.

Applying condition \eqref{cond:corona_graph} to $R_0 \equiv Q(S)$, we see that there is Lipschitz graph $\Gamma$ with Lipschitz constant $\le \delta$ so that 
\begin{align}\label{eqn:goal}
\HD^d(\delta^{-1} B_{R_0} \cap (E \Delta \Gamma)) \le \delta \HD^d(R_0) \le \delta C_D 2^{jd} \le \delta C_D r^d.
\end{align}
By \eqref{eqn:goal}, the proof will be finished as long as we show that $\delta^{-1}B_{R_0} \supset B_r(x)$. However, if $\abs{y-x} < r$, then
\begin{align*}
\abs{y-c_{R_0}} \le \abs{y- x} + \abs{x-c_{R_0}} < r + \mathrm{diam}(R_0) \le 2^{j+1} + \mathrm{diam} (R_0) \le (2 C_D + 1) \mathrm{diam}(R_0),
\end{align*}
where $c_{R_0}$ is the center of $R_0$. This proves the inclusion, since $B_{R_0} \coloneqq B_{\mathrm{diam}(R_0)}(c_{R_0})$.
\end{proof}

\subsection{Conventions for constants}
In general, we denote by $C$ a constant which is allowed to change line per line, depending on the parameters explicitly stated in the statement of a Lemma, Theorem, or Corollary. We avoid using the symbols $\lesssim, \gtrsim$ but very infrequently will use the notation $A \simeq_D B$ to mean that there is some constant $C >0$ depending only on $D$ so that $C^{-1}A \le B \le C A$.

\section{\texorpdfstring{$\delta$}{delta}-UR measures admit \texorpdfstring{$\delta^{\theta_0}$}{small}-Corona decompositions} \label{sec:ur_cor}
In this section, we show that \ref{cond:delta_ur} gives \ref{cond:delta_cor}, i.e., we prove Theorem \ref{thm:ur_implies_corona}.
Let us begin within two useful lemmas.
The first shall be used repeatedly in future Sections.

To motivate the first result, remark that if $\mu$ is $d$-Ahlfors regular with constant $\cmu> 0$ and support $E$, then in general we may only conclude that $\mu$ and $\HD^d|_E$ are mutually absolutely continuous with density $g = d \mu/d\HD^d|_E$ satisfying $\cmu^{-1} \le g \le 2^d \cmu$, and moreover, $E$ is $d$-Ahlfors regular with constant $2^d \cmu^2$ (see for example, \cite[Theorem 6.9]{Mattila}).
This crude estimate is problematic if we want precise control of the Ahlfors regularity constant of $\HD^d|_E$ when $\mu$ is $d$-Ahlfors regular with constant $\cmu$ that is close to $1$.
When more geometric regularity is assumed, though, this can be strengthened as in the following.

\begin{lemma}\label{lemma:adr_m_adr_set}
Suppose that $\mu$ is a $d$-Ahlfors regular measure in $\R^n$ with constant $\cmu>0$, and that $E = \spt \mu$ is $d$-rectifiable.
Then the density $d\HD^d|_E/d\mu$ exists and satisfies $$C_\mu^{-1} \le d\HD^d|_E/d\mu \le C_\mu,$$ $\mu$-almost everywhere.
In particular, for any subset $A \subset \R^n$ Borel, we have 
\begin{align}
\cmu^{-1} \le \dfrac{\HD^d|_E(A)}{ \mu(A)} \le \cmu, \label{eqn:adr_m_adr_set}
\end{align}
and $\HD^d|_E$ is $d$-Ahlfors regular with constant $\cmu^2 >0$. 
\end{lemma}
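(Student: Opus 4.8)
The plan is to identify the Radon--Nikodym derivative $g \coloneqq d\HDE/d\mu$ as an explicit pointwise limit and to read off its two-sided bound from the Ahlfors regularity of $\mu$ together with the rectifiability of $E$. Observe first that the upper bound $g \le \cmu$ is already immediate from the crude estimate recalled just above (it gives $d\mu/d\HDE \ge \cmu^{-1}$, i.e.\ $g \le \cmu$); the content of the lemma is the matching lower bound $g \ge \cmu^{-1}$. It is here that rectifiability enters, through the fact that it forces the density of $\HDE$ to equal $1$ at $\HD^d$-almost every point of $E$ -- this is exactly what upgrades the crude comparability constant $2^d \cmu$ to the sharp value $\cmu$. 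I would nonetheless prove both bounds uniformly via a single differentiation argument.

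Concretely, I would proceed as follows. Since $\mu$ is $d$-Ahlfors regular with $\spt\mu = E$, the crude estimate (\cite[Theorem 6.9]{Mattila}) shows that $E$ is $d$-Ahlfors regular with some finite constant, hence $\HDE$ is locally finite and thus a Radon measure, and that $\mu$ and $\HDE$ are mutually absolutely continuous; in particular $g = d\HDE/d\mu$ exists and is positive and finite $\mu$-a.e. Next, apply the Lebesgue--Besicovitch differentiation theorem to the Radon measures $\HDE$ and $\mu$ on $\R^n$ to obtain $g(x) = \lim_{r \to 0} \HDE(B(x,r))/\mu(B(x,r))$ for $\mu$-a.e.\ $x$. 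Since $\mu$ is concentrated on $E = \spt\mu$ and $E$ is $d$-rectifiable with $\HDE$ locally finite, the density theorem for rectifiable sets (see \cite{Mattila}, and Theorem \ref{thm:maggi_blowup}; with our normalization of $\HD^d$ the density is $1$) gives $r^{-d}\HD^d(E \cap B(x,r)) \to 1$ for $\HD^d$-a.e.\ $x \in E$, hence for $\mu$-a.e.\ $x$ by mutual absolute continuity. Writing $g(x) = \lim_{r\to0} \bigl( r^{-d}\HD^d(E\cap B(x,r)) \bigr) / \bigl( r^{-d}\mu(B(x,r)) \bigr)$, the numerator tends to $1$ while the denominator lies in $[\cmu^{-1},\cmu]$ for every $r>0$ by \eqref{eqn:ahl_reg}, so the existence of the limit forces $\cmu^{-1} \le g \le \cmu$ $\mu$-a.e. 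Then $\HDE = g\,\mu$ gives \eqref{eqn:adr_m_adr_set} for every Borel $A$ by integration, and since $\spt\HDE = \spt\mu = E$ by mutual absolute continuity, combining \eqref{eqn:adr_m_adr_set} with \eqref{eqn:ahl_reg} for $\mu$ shows $\HDE$ is $d$-Ahlfors regular with constant $\cmu^2$.

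The only real subtlety is matching conventions: one must make sure that the normalization $\HD^d(B(x,r)\cap P) = r^d$ is precisely the one under which a $d$-rectifiable set has density $1$ at $\HD^d$-a.e.\ point, and that this density limit and the Lebesgue--Besicovitch limit are taken over the same family of balls $B(x,r)$. Everything else -- the transfer of statements valid $\HD^d$-almost everywhere to statements valid $\mu$-almost everywhere via mutual absolute continuity, and the passage from the pointwise bounds on $g$ to the displayed set-function inequalities -- is routine.
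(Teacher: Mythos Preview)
Your proposal is correct and follows essentially the same approach as the paper: both arguments use mutual absolute continuity from the crude estimate, invoke the density theorem for rectifiable sets to get $r^{-d}\HD^d(E\cap B(x,r)) \to 1$, factor the ratio $\HDE(B(x,r))/\mu(B(x,r))$ accordingly, and read off the bounds from the Ahlfors regularity of $\mu$. The only cosmetic difference is that the paper estimates the $\limsup$ and $\liminf$ of the ratio separately rather than first invoking Lebesgue--Besicovitch to assert the limit exists and equals $g(x)$.
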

\begin{proof}
It is straight-forward to see from the Ahlfors regularity of $\mu$ that $E$ is also $d$-Ahlfors regular, and $\HD^d|_E$ and $\mu$ are mutually absolutely continuous with density $d\HD^d|_E/d\mu$ bounded above and below.
Since $E$ is rectifiable, we know that the density \[\theta^d(x) = \lim_{r \downarrow 0}  \HD^d|_E(B(x,r))/ r^d \] exists and equals $1$ for $\HD^d$ almost all $x \in E$ (see, for example, Theorem 16.2 in \cite{Mattila}).
It follows then that for $\HD^d|_E$ (and thus $\mu$) almost all $x$, 
\begin{align*}
\dfrac{d \HD^d|_E}{d \mu} (x) & \le \limsup_{r \da 0} \dfrac{\HD^d|_E(B(x,r))}{ \mu(B(x,r))} \\
& = \limsup_{r \da 0} \dfrac{\HD^d|_E(B(x,r))} {r^d} \dfrac{r^d}{\mu(B(x,r))} \\
& \le \cmu,
\end{align*}
by Ahlfors regularity of $\mu$.
A similar computation shows $d\HD^d|_E/d\mu(x) \ge \cmu^{-1}$ for $\mu$ almost all $x$, and thus whenever $A \subset \R^n$ is Borel,
\begin{align*}
\cmu^{-1} \mu(A)  = \int_{A} \cmu^{-1} \; d\mu  \le \int_A \dfrac{d\HD^d|_E}{d\mu} \; d\mu  \le \cmu  \int_A \; d\mu  = \cmu \, \mu(A). 
\end{align*}
Since $\HD^d|_E(A) = \int_A (d \HD^d|_E /d \mu) \; d\mu$ (see for example, Theorem 2.12 in \cite{Mattila}), this shows \eqref{eqn:adr_m_adr_set}. The last claim of the Lemma follows by taking $A = B(x,r)$ for $x \in E$ in \eqref{eqn:adr_m_adr_set} and using Ahlfors regularity of $\mu$. 
\end{proof}

The proof of the following lemmas are omitted, since they are proved (when $d = n-1$) in \cite[Lemma 5.4]{DLMAinfty}. The arguments in higher codimension are the same.
\begin{lemma}[see Lemma 5.4, \cite{DLMAinfty}]\label{lemma:delta_ur_adr}
There is a constant $C_0 \ge 1$ depending only on $n$ and $d$ such that if $E \subset \R^n$ is $\delta$-UR of dimension $d$, then $E$ is $d$-Ahlfors regular with constant at most $1 + C_0 \delta^{1/d}$ and $C_0 \delta^{1/d}$-Reifenberg flat.
\end{lemma}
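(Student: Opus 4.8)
The plan is to bootstrap, in four stages, from the single structural input the $\delta$-UR hypothesis provides: at every $(x,r)$ there is a $\delta$-Lipschitz graph $\Gamma$ that is $L^1$-close to $E$ in $B(x,r)$ (i.e.\ $\HD^d((E\Delta\Gamma)\cap B(x,r))\le\delta r^d$) and meets $B(x,r/2)$. I would first isolate an auxiliary fact purely about such graphs: if $\Gamma=\{w+A(w):w\in P\}$ with $P\in G(n,d)$ and $\mathrm{Lip}(A)\le\delta<1/10$, then the graph map $\Phi(w)=w+A(w)$ expands lengths by a factor in $[1,\sqrt{1+\delta^2}]$ and has Jacobian in $[1,(1+\delta^2)^{d/2}]$; consequently $\Gamma$ is $d$-Ahlfors regular for balls centered on $\Gamma$ with constant $1+C\delta^2$, $\HD^d(\Gamma\cap B(x,r))\le(1+C\delta^2)r^d$ for any ball $B(x,r)$, and $\Gamma$ stays within $C\delta r$ of the affine $d$-plane through any point of $\Gamma\cap B(x,r)$ parallel to $P$, inside $B(x,Cr)$. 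This is routine and I would record it as a preliminary lemma.

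With this in hand, fix $y\in E$ and $\rho>0$ and apply the $\delta$-UR condition at $(y,\rho)$ to get a graph $\Gamma$ and a point $z_1\in\Gamma\cap B(y,\rho/2)$. For the \emph{upper bound}, $\Gamma\cap B(y,\rho)$ projects into a radius-$\rho$ ball of $P$, so $\HD^d(E\cap B(y,\rho))\le\HD^d(\Gamma\cap B(y,\rho))+\delta\rho^d\le(1+C\delta)\rho^d$; since $\delta\le\delta^{1/d}$ for $d\ge1$ this is already better than claimed. For a \emph{crude lower bound}, $B(z_1,\rho/2)\subset B(y,\rho)$ and the graph's lower regularity gives $\HD^d(\Gamma\cap B(y,\rho))\ge(1-C\delta^2)(\rho/2)^d$, so $\HD^d(E\cap B(y,\rho))\ge(1-C\delta^2)(\rho/2)^d-\delta\rho^d\ge c_d\rho^d$ for $\delta$ small, with $c_d$ purely dimensional. (This crude bound is all that is available so far; the sharp lower bound comes in the last stage.)

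The crux is \emph{Reifenberg flatness}. Fix $y\in E$, $\rho>0$, apply $\delta$-UR at $(y,2\rho)$ to get $\Gamma$ with $\HD^d((E\Delta\Gamma)\cap B(y,2\rho))\le2^d\delta\rho^d$ and a point $z_0\in\Gamma\cap B(y,\rho)$, and let $\tilde P$ be the affine $d$-plane through $z_0$ parallel to the base plane of $\Gamma$. The preliminary lemma makes $\Gamma$ and $\tilde P$ Hausdorff-$C\delta\rho$-close in $B(y,2\rho)$, so everything reduces to showing $E$ and $\Gamma$ are Hausdorff-$C\delta^{1/d}\rho$-close in $B(y,\rho)$ — and this is where the $L^1$ hypothesis must be upgraded to an $L^\infty$ one. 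If $z\in E\cap\overline{B(y,\rho)}$ and $t=\dist(z,\Gamma)$, then $B(z,\min(t,\rho/10))$ is disjoint from $\Gamma$ and lies in $B(y,2\rho)$, so $\HD^d(E\cap B(z,\min(t,\rho/10)))\le2^d\delta\rho^d$; comparing with the crude lower bound $c_d\min(t,\rho/10)^d$ forces $t\le C\delta^{1/d}\rho$. The reverse inclusion is symmetric: a point $w\in\tilde P\cap\overline{B(y,\rho)}$ has a nearby graph point $p$ with $|w-p|\le C\delta\rho$, and if $s=\dist(p,E)$ then $B(p,\min(s,\rho/10))$ is disjoint from $E$ and lies in $B(y,2\rho)$, so $\HD^d(\Gamma\cap B(p,\min(s,\rho/10)))\le2^d\delta\rho^d$, which contradicts the graph's lower regularity unless $s\le C\delta^{1/d}\rho$. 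Combining, $d_{y,\rho}(E,\tilde P)\le C_0\delta^{1/d}$, and $\tilde P\ni z_0$ meets $\overline{B(y,\rho)}$, so $b\beta_{\infty,E}(y,\rho)\le C_0\delta^{1/d}$.

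Finally, the \emph{sharp lower bound}: taking $z=y$ in the previous paragraph produces $\hat y\in\Gamma$ with $|\hat y-y|\le\eta\rho$ where $\eta:=C\delta^{1/d}$, so $B(\hat y,(1-\eta)\rho)\subset B(y,\rho)$ and
\[
\HD^d(E\cap B(y,\rho))\ge\HD^d(\Gamma\cap B(\hat y,(1-\eta)\rho))-2^d\delta\rho^d\ge(1-C\delta^2)(1-\eta)^d\rho^d-2^d\delta\rho^d\ge(1-C_0\delta^{1/d})\rho^d.
\]
Together with the upper bound this shows $\HD^d|_E$ is $d$-Ahlfors regular with constant $\le1+C_0\delta^{1/d}$, finishing the proof. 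The step I expect to be the genuine obstacle is the Reifenberg-flatness stage: the $\delta$-UR definition only controls $E$ versus $\Gamma$ in measure, with no Hausdorff-distance information beyond the non-degeneracy $\Gamma\cap B(x,r/2)\ne\emptyset$, so that control has to be manufactured by pitting the measure estimate against lower Ahlfors regularity — which is precisely why the crude lower bound must be established first and why the Reifenberg constant degrades from order $\delta$ to order $\delta^{1/d}$.
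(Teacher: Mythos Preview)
Your argument is correct and complete. The paper itself omits the proof entirely, deferring to \cite[Lemma 5.4]{DLMAinfty} (stated there for $d=n-1$) and asserting that the higher-codimension case is identical; your proposal supplies precisely the natural argument one expects behind that citation --- upgrade the $L^1$ (measure) closeness of $E$ and $\Gamma$ to $L^\infty$ (Hausdorff) closeness by playing the $\delta r^d$ symmetric-difference bound against a crude lower Ahlfors bound, which is exactly the mechanism responsible for the $\delta \to \delta^{1/d}$ degradation, and then bootstrap to the sharp lower regularity. One small remark: when you invoke the ``crude lower bound $c_d \min(t,\rho/10)^d$'' for the ball $B(z,\min(t,\rho/10))$ in the Reifenberg step, you are implicitly using that this crude bound holds at \emph{every} center $z\in E$ and \emph{every} scale, not just at the fixed $(y,\rho)$ --- this is of course immediate since the $\delta$-UR hypothesis is scale-invariant, but it is worth saying explicitly since the crude bound was only stated once at a single scale.
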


We may now state the main Theorem of this section, which is that \ref{cond:delta_ur} gives \ref{cond:delta_cor}. 
Of course, in our setting of $\delta$-UR sets, life becomes easier in that we need not go through the effort of constructing the Lipschitz graphs \emph{by hand} in a small-constant Corona decomposition, as the authors do in \cite{DSSIO}.
Instead, the following result simply says that with our approximating Lipschitz graphs coming from the definition of a $\delta$-UR set, we obtain a Corona decomposition with a loss in a constant, and an exponent in $\delta$. 

\begin{thm}\label{thm:ur_implies_corona} There are $C_0 \ge 1$, $\delta_0 >0$ and $\theta_0 \in (0,1)$ depending only on the underlying dimensions $n$ and $d$ so that if $E \subset \R^n$ is $\delta$-UR of dimension $d$ in $\R^n$ with $\delta \in (0 ,\delta_0)$, then $E$ admits $C_0 \delta^{\theta_0}$-Corona decompositions.
\end{thm}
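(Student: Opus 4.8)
The plan is to build, for each top cube $R_0 \in \Delta$, a $\delta$-Corona decomposition by a stopping-time argument driven by the Lipschitz graphs furnished by the $\delta$-UR hypothesis. Fix $R_0$, and for each $R$ with $\gen R = \gen R_0$ and $R \subset \delta^{-1}B_{R_0}$ apply Definition \ref{defn:delta_ur} at $x = c_R$ and an appropriate radius $\sim \delta^{-1}\diam R$ (and again at scales $\sim \diam R$) to obtain an initial Lipschitz graph $\Gamma_0 = \Gamma_0(R)$ with constant $\le \delta$ such that $\HD^d|_{B}(E\Delta\Gamma_0)$ is small and $E$ and $\Gamma_0$ are bilaterally close in the relevant balls. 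To meet the ``top graph'' condition \eqref{cond:corona_topgr}, I would first handle the special family containing $R$ itself (and the one containing $R_0$) by a single global choice of graph, exploiting that $R, R_0$ live in a bounded multiple of $B_{R_0}$ and choosing one graph that works for the largest relevant ball $\sim \delta^{-1}B_{R_0}$; Lemma \ref{lemma:delta_ur_adr} guarantees Ahlfors regularity with constant $1 + C_0\delta^{1/d}$, which is what lets all the $(1+\delta)$-type estimates in \eqref{cond:corona_graph}–\eqref{cond:corona_carleson} close with a loss of a power of $\delta$.

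Next I would run the stopping time. Starting from a cube $Q(S)$ with an assigned good graph $\Gamma(S)$, declare a descendant $Q \subsetneq Q(S)$ to be a stopping cube (hence the maximal cube of a new family) as soon as one of two things happens: (i) $E$ is no longer $\delta^{1/2}$-close to $\Gamma(S)$ in $\delta^{-1}B_Q$ in the bilateral sense, or (ii) the graph constant / measure control of $\Gamma(S)$ degrades past a $\delta^{1/2}$ threshold in $\delta^{-1}B_Q$; otherwise put $Q$ in $S$. The coherence condition \eqref{cond:corona_coherent} is immediate from this construction (stopping is monotone down the tree, and we stop at a cube together with all its children, or keep all children). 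The key point is to bound the number of stopping cubes: I would show that whenever $Q = Q(S')$ is a stopping child inside $Q(S)$, the bilateral $\beta_\infty$ of $E$ at $Q$ is $\gtrsim \delta^{1/2}$, \emph{or} two distinct good Lipschitz-graph approximations of $E$ disagree at scale $\diam Q$; in either case, re-applying the $\delta$-UR property at $Q$ (whose own graph $\Gamma(S')$ is $\delta$-flat and $\delta$-close to $E$) forces a definite amount of $\HD^d$-measure of $E\Delta(\text{something flat})$ to be localized near $Q$. Comparing the two flat pieces and using the triangle inequality for $d_{x,r}$, plus the Ahlfors upper regularity from Lemma \ref{lemma:delta_ur_adr}, should yield $\HD^d(Q(S')) \le C\delta^{1/2}\,(\text{energy charged to }Q(S'))$ with the energies almost-disjoint, giving a Carleson packing bound $\sum_{Q(S)\subset R'} \HD^d(Q(S)) \le (1 + C\delta^{\theta_0})\HD^d(R')$, i.e.\ \eqref{cond:corona_carleson} after relabeling $\theta_0$.

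For \eqref{cond:corona_graph} on a non-top family $S$ with maximal cube $Q(S)$: the graph $\Gamma(S)$ is the one inherited at the moment $Q(S)$ was created, so by construction $E$ is $\delta^{1/2}$-bilaterally close to it in $\delta^{-1}B_{Q(S)}$; upgrading this to the measure bound $\HD^d(\delta^{-1}B_{Q(S)}\cap(E\Delta\Gamma(S)))\le C\delta^{\theta_0}\HD^d(Q(S))$ requires combining bilateral closeness with the $\delta$-UR measure estimate at the scale of $Q(S)$ (which gives $\HD^d|_B(E\Delta\Gamma')\le\delta|B|$ for \emph{some} $\delta$-flat $\Gamma'$) and then noting that two $\delta^{1/2}$-flat graphs that are both $\delta^{1/2}$-close to the same $d$-Ahlfors regular set, with one of them carrying almost all of $E$'s mass, must themselves be $C\delta^{\theta}$-close in measure — a routine Lipschitz-graph comparison using the Ahlfors regularity constant $1+C_0\delta^{1/d}$. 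Finally, rescaling $\delta\mapsto\delta^{\theta_0}$ once more absorbs all the $C_0$'s, and shrinking $\delta_0$ ensures $C_0\delta^{\theta_0}<1/10$ and $\cd\le C_{n,d}$, so the output is a genuine $C_0\delta^{\theta_0}$-Corona decomposition. The main obstacle I anticipate is the bookkeeping in step two: making the ``a stopping cube charges a definite amount of localized $E\Delta(\text{flat})$ measure, almost disjointly'' argument precise enough to produce the \emph{small-constant} packing bound $(1+\delta^{\theta_0})$ rather than just a bounded one — this is exactly the place where ordinary Corona constructions are wasteful and where the quantitative $\delta$-UR hypothesis must be used sharply, presumably via an $L^2$ / Chebyshev argument on the defect of flatness rather than a crude counting argument.
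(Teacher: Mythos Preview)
Your overall architecture—a stopping-time construction on the dyadic tree, coherent families built generation by generation, and a Carleson packing bound obtained by iterating—matches the paper. The gap is in your stopping criterion and, consequently, in the packing mechanism you sketch.

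You propose to stop when $E$ fails to be $\delta^{1/2}$-bilaterally close to $\Gamma(S)$ in $\delta^{-1}B_Q$, and then to charge each stopping cube a portion of $E\Delta(\text{flat})$ located ``near $Q$.'' There are two problems. First, the witness point for the distance failure lives in the inflated ball $\delta^{-1}B_Q$, and these balls for different minimal cubes overlap uncontrollably, so the charges are not almost disjoint as you hope. Second, even when the witness lies in $Q$ itself, the mass of $E\setminus\Gamma$ you can charge is only of order $(\delta^{1/2}\diam Q)^d = \delta^{d/2}\HD^d(Q)$, which after dividing gives $\sum_{Q\in m(S)}\HD^d(Q)\le C\delta^{-d/2}\cdot\HD^d|_E(B_{Q(S)}\setminus\Gamma)$; for $d\ge 2$ this is not small. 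You flag the packing as the main obstacle, and it is, but the proposal does not supply a mechanism that resolves it.

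The paper avoids both issues by reversing the roles of distance and measure. It applies the $\delta$-UR hypothesis once at the top, at the inflated scale $M\diam Q(S)$ with $M=\delta^{-\theta'}$, obtaining
\[
\HD^d|_E\bigl(MB_{Q(S)}\setminus\Gamma\bigr)\;\le\;\delta\,(M\diam Q(S))^d\;=\;\delta^{1-d\theta'}(\diam Q(S))^d,
\]
and then stops at $Q$ when a sibling $Q'$ is \emph{mediocre}, meaning $\HD^d(Q'\setminus\Gamma)>\eta\,\HD^d(Q')$ with $\eta=\delta^{\theta}$. This criterion is local to $Q'$ itself, so mediocre minimal cubes are genuinely disjoint subsets of $Q(S)$, and the Chebyshev step is clean:
\[
\sum_{Q\in m(S)}\HD^d(Q)\;\le\;\frac{C}{\eta}\,\HD^d|_E\bigl(MB_{Q(S)}\setminus\Gamma\bigr)\;\le\;C\delta^{\,1-d\theta'-\theta}\,\HD^d(Q(S)),
\]
which is small once $\theta+d\theta'<1$ (the paper takes $\theta=1/2$, $\theta'<\theta/(2d)$). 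The small-constant packing \eqref{cond:corona_carleson} then follows by summing a geometric series over the generations $\family_0,\family_1,\dotsc$, since the total mass of maximal cubes in $\family_k$ below any $R'$ is at most $(C\delta^{\theta_0})^{k}\HD^d(R')$. The bilateral distance estimate in \eqref{cond:corona_graph} is derived \emph{after} the measure estimate, using Ahlfors regularity of $E$ and the Reifenberg flatness of both $E$ and $\Gamma$ from Lemma~\ref{lemma:delta_ur_adr}; this is the opposite of your order, and it is precisely what makes the packing sharp. Switch to a measure-based stopping rule localized to the cube, and derive the distance control a posteriori; with that change your outline goes through.
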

\begin{proof}
As mentioned at the end of Remark \ref{rmk:corona}, since $E$ is $\delta$-UR of dimension $d$, we may fix once and for all a system of dyadic cubes $\Delta$ for $E$ with constant $\cd \le C_{n,d}$.

Now begin with some dyadic cube $R_0 \in \Delta$ for $E$.
Denote by $C_E$ the Ahlfors regularity constant of $\HD^d|_E$, which by Lemma \ref{lemma:delta_ur_adr}, is bounded.
Fix $\theta, \theta' \in (0,1)$ to be determined, and set $\eta \coloneqq \delta^\theta, M \coloneqq \delta^{-\theta'}$.
For definiteness, we state now that $\theta = 1/2$ and $\theta' < \theta/(2d)$ shall suffice here, but these parameters are different than the $\theta_0$ in the conclusion of the Theorem.
We construct our partition $\family \equiv \family(R_0)$ by sequential coherent generations.
That is, we will construct $\family$ as a disjoint union $\family = \family_0 \cup \family_1 \cup \family_2 \cup \dotsc$ where each $\family_i$ consists of coherent collections $S \subset \Delta(R_0)$, and $\family_0$ contains a single collection $S_0$ with top cube $Q(S_0) = R_0$.
Moreover, each $\family_i$ for $i \in \N$ will satisfy the following: for each $S \in \family_i$, there is a unique $S' \in \family_{i-1}$ so that $Q(S) \subset Q(S')$, and all cubes $Q \in \Delta(R_0)$ with $Q(S) \subsetneq Q \subset Q(S')$ are such that $Q \in S'$.
Also, we shall deal only with the partition $\family = \family(R_0)$ of $\Delta(R_0)$ for now, and leave to the very end of the proof how to ensure that (\ref{cond:corona_topgr}) holds for the other $R \in \Delta$ that are nearby $B_{R_0}$.

As mentioned, we shall take $R_0$ to be the top cube of the only collection $S_0$ in the zeroth generation family, $\family_0$.
Since $E$ is $\delta$-UR, we choose a $\delta$-Lipschitz graph $\Gamma \equiv \Gamma(S_0)$ so that 
\begin{align}
\HDE(M B_{R_0} \setminus \Gamma) + \HD^d|_\Gamma(MB_{R_0} \setminus E) & \le \delta ( M  \diam R_0)^d.  \nonumber \\
& = \delta^{1 - d \theta'} (\diam R_0)^d. \label{eqn:ur_cor_1}
\end{align}
In what follows, estimate (\ref{eqn:ur_cor_1}) (and the fact that $\Gamma, E$ are sufficiently flat) shall be the \textit{only} fact we use about $\Gamma$ to ensure that this particular $\Gamma$ shall suffice in the construction of $S_0$.

Now we continue adding children of $R_0$ to the collection $S_0$ until we reach a cube $Q$ that has a sibling $Q'$ (possibly $Q' = Q$) which is mediocre for $S_0$, meaning that
\begin{align}
\HD^d(Q' \setminus \Gamma) > \eta \HD^d(Q'). \label{eqn:ur_cor_2}
\end{align}
At this stage, $Q$, and all of its siblings become minimal cubes of the collection $S_0$, and all of their children become top cubes for the new collections $S \in \family_1$.
Notice that for such $Q$, if $\tilde{Q}$ is the parent of $Q$, then $\tilde{Q}$ is not mediocre for $S_0$, and thus, 
\begin{align}
\HD^d(Q \setminus \Gamma) & \le \HD^d( \tilde{Q} \setminus \Gamma )   \le \eta \HD^d(\tilde{Q})  \le C \eta \HD^d(Q), \label{eqn:ur_cor_4}
\end{align}
with constant $C$ depending only on $\cd$ and the underlying dimensions.

Let us make some observations about the family $S_0$ constructed.
First of all, $S_0$ is coherent (i.e., satisfies condition (\ref{cond:corona_coherent})) by construction.
Moreover, from (\ref{eqn:ur_cor_4}) we see that all $Q \in S_0$ satisfy 
\begin{align}
\HD^d( Q \setminus \Gamma ) \le C \eta \HD^d(Q), \label{eqn:ur_cor_3}
\end{align}
since those $Q \in S_0$ that are not minimal satisfy the above inequality with $C = 1$.
Let us show now that this measure estimate implies that for the cubes in $S_0$, $\Gamma$ and $E$ are very near each other in that for any $Q \in S_0$, we have
\begin{align}
\sup_{x \in M B_Q \cap (E \cup \Gamma) } \dist(x, \Gamma) + \dist(x, E) & \le C M^2 \eta^{1/d} \diam Q \nonumber \\
& =C \delta^{\theta/d - 2 \theta'} \diam Q ,\label{eqn:ur_cor_5}
\end{align}
for some constant $C$ depending only on the dimensions and $C_D$. 

First, suppose that $Q \in S_0$, and $x \in B(c_Q, \cd^{-1} \diam Q/2) \cap Q \setminus \Gamma$ (recall that $c_Q$ is the center of $Q$, for which (\ref{cond:dyadic4}) holds).
Denoting $r = \dist(x, \Gamma)$, then as long as $\delta_0$ is sufficiently small, we must have $r \le \cd^{-1} \diam Q/2$.
This is because otherwise we have $Q \setminus \Gamma \supset Q \cap B(x, \cd^{-1} \diam Q /2)$, and thus $\HD^d(Q  \setminus \Gamma) \ge \HD^d(Q \cap B(x, \cd^{-1} \diam Q /2)) \ge c \HD^d(Q)$ for some constant $0 < c < 1$ depending only on $\cd$ and $C_E$.
When $\delta_0$ (and thus $\eta$) is sufficiently small, this contradicts (\ref{eqn:ur_cor_3}) and whence the fact that $Q \in S_0$.
Hence, we may assume that $r \le \cd^{-1} \diam Q/2$, so $B(x,r) \subset B(c_Q, \cd^{-1} \diam Q)$, and thus
\begin{align*}
\HD^d(Q \setminus \Gamma) & \ge \HD^d (Q \cap B(x,r))   \ge C_E^{-1} r^d.
\end{align*}
Since $Q \in S_0$, we have that (\ref{eqn:ur_cor_3}) gives $r \le C \eta^{1/d} \diam Q$, i.e.,
\begin{align}
\sup_{x \in E \cap B(c_Q, \cd^{-1} \diam Q/2)} \dist(x, \Gamma) \le C \eta^{1/d} \diam Q. \label{eqn:ur_cor_6}
\end{align}
Next, recall from Lemma \ref{lemma:delta_ur_adr} that $E$ is $C \delta^{1/d}$ Reifenberg flat, and similarly, so is $\Gamma$.
We claim that for $\delta_0$ sufficiently small, this implies
\begin{align}
\text{  \parbox[t]{0.8 \textwidth}{for every  $x \in (\Gamma \setminus E) \cap B(c_Q, \cd^{-1}\diam Q /8)$, there is a point  \\ $y \in E \cap B(x, 2 \dist(x, E)) \cap  B(c_Q, \cd^{-1} \diam Q /2)$ with   $\dist(y, \Gamma) \ge (2/3)\dist(x, E)$. }} \label{eqn:nearby_good_pt}
\end{align}
Assume \eqref{eqn:nearby_good_pt} for the time being. Along with (\ref{eqn:ur_cor_6}), the existence of such a point shows that 
\begin{align}
\sup_{x \in B(c_Q, \cd^{-1} \diam Q /8 )\cap (E \Delta \Gamma)} \dist(x, \Gamma) + \dist(x,  E) \le C \eta^{1/d} \diam Q. \label{eqn:ur_cor_7}
\end{align}
Appealing again to the fact that $E$ and $\Gamma$ are $C \delta^{1/d}$ Reifenberg-flat, one deduces (\ref{eqn:ur_cor_5}) from (\ref{eqn:ur_cor_7}) with a possibly larger $C$.
The proof is slightly technical, but it merely requires choosing good approximating planes for $E$ and $\Gamma$ at different scales.
For the sake of completeness, let us sketch a few details. 

Recall here that we use the notation $d_{x,r}$ for the normalized local Hausdorff distance as in Definition \ref{defn:hd_dist}.
In addition, $\theta, \theta'$ are such that $\theta' < \theta/(2d)$, and thus we have that $M^2 \eta^{1/d}$ can be made arbitrarily small if $\delta_0$ is chosen small enough.
Since $\Gamma$ is a $\delta$-Lipschitz graph, we know that there is some plane $d$-plane $P_\Gamma$ so that 
\begin{align*}
d_{c_Q, r}(\Gamma, P_\Gamma) \le C \delta^{1/d},
\end{align*}
as long as $\delta_0$ is sufficiently small, and as long as $r \ge \cd^{-1} \diam Q/8$.
Since $E$ is $C \delta^{1/d}$-Reifenberg flat, we may choose $d$-planes $P_E$ and $P_E'$ so that 
\begin{align*}
d_{c_Q, 2 \diam Q}(E, P_E) + d_{c_Q, 2 M \diam Q}(E, P_E') \le C \delta^{1/d}.
\end{align*} 
The fact that $P_E$ and $P_E'$ are very good approximations to $E$ inside $2 B_Q$, and run very near the center of $B_Q$, imply that $d_{c_Q, 2 \diam Q}(P_E, P_E') \le C M \delta^{1/d}$, and thus
\begin{align*}
d_{c_Q, M \diam Q}(P_E, P_E') \le C M \delta^{1/d}.
\end{align*}
Finally, recalling estimate (\ref{eqn:ur_cor_7}), we see that $d_{c_Q, \diam Q}(P_\Gamma, P_E) \le C \eta^{1/d}$, which implies $d_{c_Q, M \diam Q}(P_\Gamma, P_E) \le C \eta^{1/d}$.
Thus if we compare distances from $\Gamma$, to $P_\Gamma$, to $P_E$, then $P_E'$ and finally to $E$ inside $B(c_Q, M \diam Q)$, we obtain (\ref{eqn:ur_cor_5}).

This leaves us to justifying \eqref{eqn:nearby_good_pt}, which can be argued by contradiction. 
Indeed, if no such point $y \in E \cap B(x, 2 \dist(x, E))$ exists, then each such $y$ satisfies $\dist(y, \Gamma) < (2/3)\dist(x, E)$. 
Then the fact that $E$ and $\Gamma$ are very well approximated by $d$-planes in $B(x, 2 \dist(x, E))$ and similar arguments to those described above would lead to the contradiction that there is some $z \in E$ with $\abs{x - z} < \dist(x, E)$.
Now we simply recall that $c_Q \in E$, and so since $x \in B(c_Q, \cd^{-1} \diam Q /8)$ we have that $\dist(x, E) \le \cd^{-1} \diam Q /8$, so that necessarily, $y \in B(c_Q, \cd^{-1}\diam Q/2)$.
This completes the proof of (\ref{eqn:nearby_good_pt}).

Hence, for the first family $\family_0$, we have that the first part of (\ref{cond:corona_graph}) holds with $C M^2 \eta^{1/d} = C \delta^{\theta/d - 2 \theta'}$.
By construction (recall \eqref{eqn:ur_cor_1}) , we also have the desired measure estimate 
\begin{align}
\HD^d(MB_{Q(S_0)} \cap (E \Delta \Gamma)) \le C \delta^{1 - d\theta'} \HD^d(Q(S_0)). 
\end{align}
We have one final step for $\family_0$, which is to estimate the portion of minimal cubes of $S_0$, denoted $m(S_0)$, contained in $Q(S_0)$.

Recall that if $Q \in m(S_0)$, then necessarily $Q$ has a sibling $Q' \in m(S_0)$ which is mediocre for $S_0$, i.e., (\ref{eqn:ur_cor_2}) holds.
Then since the number of siblings of any dyadic cube in $\Delta$ is uniformly bounded (by Ahlfors regularity of $E$), we have that 
\begin{align*}
\sum_{Q \in m(S_0)} \HD^d(Q) & \le C \sum_{ \substack{ Q' \in m(S_0) \\ Q' \text{ mediocre }} } \HD^d(Q')  \le \dfrac{C}{\eta} \sum_{ \substack{ Q' \in m(S_0) \\ Q' \text{ mediocre }} }   \HD^d(Q' \setminus \Gamma ) \\
& \le \dfrac{C}{\eta} \HD^d(Q (S_0)\setminus \Gamma)  \le \dfrac{C}{\eta} \HDE(M B_{R_0} \setminus \Gamma) \\
& \le C \delta^{1 - d\theta' - \theta} \HD^d(R_0)
\end{align*}
by definition of $\eta$. Notice that if $\theta, \theta'$ are sufficiently small, then $1 - d \theta' - \theta >0$, and in particular, taking $\theta = 1/2$ and $\theta' < \theta/2d$ shall suffice for these purposes.

Now assuming that $\family_{i-1}$ has been constructed for $i \in \N$, we make each child $Q_0$ of some $Q ' \in m(S')$ for $S' \in \family_{i-1}$ a top cube $Q_0 = Q(S)$ of a new collection $S \in \family_{i}$, and construct $S \in \family_{i}$ in the same way as we did $S_0 \in \family_0$.
That is, since $E$ is $\delta$-UR, we choose a Lipschitz a $\delta$-Lipschitz graph $\Gamma = \Gamma(Q_0)$ for which 
\begin{align*}
\HDE(M B_{Q_0} \setminus \Gamma) + \HD^d|_\Gamma( M B_{Q_0} \setminus E) \le \delta (M \diam Q_0)^d.
\end{align*}
We continue to add subcubes of $Q \in \Delta(Q_0)$ to the collection $S$ until we find a some cube $Q$ who has a sibling $Q'$ which is mediocre for $S$ in that (\ref{eqn:ur_cor_2}) holds.
At this stage $Q$ and all of its siblings become minimal cubes of $S$, and each of their children become top cubes in the next generation $\family_{i+1}$.
The same proof above applies to this collection $S$ in place of $S_0$: it is coherent in that (\ref{cond:corona_coherent}) holds, and in addition, we have the following estimates:
\begin{align}
\sup_{x \in M B_Q \cap ( E \cup \Gamma)} \dist(x, E) + \dist(x, \Gamma (Q(S)) ) & \le C \delta^{\theta/d - 2\theta'} \diam Q \text{ for } Q \in S,  \\
\HD^d(M B_{Q(S)} \cap  \left(E \Delta \Gamma(Q(S))) \right ) & \le C \delta^{1 - d\theta'} \HD^d(Q(S)), \\
\sum_{Q \in m(S)} \HD^d(Q) & \le C \delta^{1-d\theta' - \theta} \HD^d(Q(S)). \label{eqn:ur_cor_8}
\end{align}
Recalling that $M = \delta^{-\theta'}$, we conclude that $E$ admits a $C \delta^{\theta_0}$ Corona decomposition with $\theta_0 = \min \{\theta/d - 2\theta', 1 -d\theta' -\theta,  \theta'  \} > 0$ provided that we show  (\ref{eqn:ur_cor_8}) implies (\ref{cond:corona_carleson}) with $C \delta^{\theta_0}$ in place of $\delta$, i.e.,
\begin{align}
\sum_{\substack{S \in \family \\ Q(S) \subset R }} \HD^d(Q(S)) \le (1 + C \delta^{\theta_0}) \HD^d(R). \label{eqn:ur_cor_11}
\end{align}

Let $R \in \Delta(R_0)$, and choose an index $i^* \in \N \cup \{0\}$ and a collection $S^*$ so that $R \in S^* \in \family_{i^*}$.
Assume first $R$ is the top cube of $S^*$, $R = Q(S^*)$.
Then by construction, 
\begin{align*}
\bigcup_{\substack{S \in \family_{i^*+1} \\ Q(S) \subsetneq R }} Q(S) = \bigcup_{\substack{Q \in m(S^*) \\ Q \subset R}} Q,
\end{align*}
where each of the unions above is a disjoint union.
In particular, we see
\begin{align*}
\sum_{\substack{S \in \family_{i^*+1} \\ Q(S) \subsetneq R }} \HD^d(Q(S)) & = \sum_{\substack{Q \in m(S^*) \\ Q \subset R}} \HD^d(Q) \\
& \le C \delta^{1- d\theta' - \theta} \HD^d(R) \\
& \le C \delta^{\theta_0} \HD^d(R),
\end{align*}
by (\ref{eqn:ur_cor_8}).
Now, for any index $k \in \N$, $k \ge i^* +1$, we have that
\begin{align*}
\bigcup_{\substack{ S \in \family_k \\ Q(S) \subsetneq R }} Q(S) & = \bigcup_{\substack{S \in \family_{k-1} \\ Q(S) \subsetneq R}} \left( \bigcup_{Q \in m(S)} Q \right)
\end{align*}
where each of the unions above are disjoint.
This gives
\begin{align}
\sum_{\substack{S \in \family_k  \\ Q(S) \subsetneq R }} \HD^d(Q(S)) & = \sum_{\substack{S \in \family_{k-1} \\ Q(S) \subsetneq R }} \left( \sum_{Q \in m(S)} \HD^d(Q) \right) \nonumber  \\
& \le C \delta^{\theta_0} \sum_{\substack{S \in \family_{k-1} \\ Q(S) \subsetneq R} } \HD^d(Q(S)) \nonumber \\
& \le (C \delta^{\theta_0})^{k - i^*} \HD^d(R), \label{eqn:ur_cor_9}
\end{align}
where the first inequality in the above follows from (\ref{eqn:ur_cor_8}), and the second is by induction on $k$.
As long as $\delta_0$ is small enough (depending only on $\theta_0$ and the underlying dimensions) , $C \delta_0^{\theta_0} < 1$.
Whence from (\ref{eqn:ur_cor_9}) we obtain
\begin{align*}
\sum_{\substack{S \in \family \\ Q(S) \subsetneq R}} \HD^d(Q(S)) & \le \sum_{k \ge i^* + 1} \sum_{\substack{S \in \family_{k} \\ Q(S) \subsetneq R}} \HD^d(Q(S)) \\
& \le \sum_{k \ge i^*+1} \left(C \delta^{\theta_0} \right)^{k - i^*} \HD^d(R) \\
& = \left( \sum_{k = 1}^\infty (C \delta^{\theta_0})^k \right) \HD^d(R)  \\
& \le C \delta^{\theta_0} \HD^d(R),
\end{align*}
since again, $\delta_0$ is small enough so that $\sum_{k \ge 1} (C \delta^{\theta_0})^k = 1 - (1 - C\delta^{\theta_0})^{-1}\le C \delta^{\theta_0}$.
We have thus shown that whenever $R \in \Delta(R_0)$ is a top cube, $R = Q(S^*)$, then 
\begin{align}
\sum_{\substack{S \in \family \\ Q(S) \subsetneq Q(S^*)}} \HD^d(Q(S)) \le C \delta^{\theta_0} \HD^d(Q(S^*)). \label{eqn:ur_cor_10}
\end{align}
From here, we deduce our estimate for general $R$.

Suppose that $R \in \Delta(R_0)$ is arbitrary.
Then we can decompose the collection of top cubes $Q(S) \subset R$, $S \in \family$ into 2 disjoint collections, $\mathcal{T}(R)$ and $\mathcal{R}(R)$, where
\begin{align*}
\mathcal{T}(R) & \coloneqq \{ S \in \family \; : \; Q(S) \subset R, \text{ and } Q(S) \subset Q(S') \subset R \text{ implies } S = S' \} \\
 \mathcal{R}(R) & \coloneqq \{ S \in \family\; : \; Q(S) \subset R, S \not \in \mathcal{T}(R) \}.
\end{align*}
Put simply, $\mathcal{T}(R)$ consists of the collections $S \in \family$ whose top cubes are the ``first'' descendants of $R$ that are top cubes, and $\mathcal{R}(R)$ are the rest.
Note that necessarily the cubes in $\mathcal{T}(R)$ are disjoint, and also to each $Q(S) \in \mathcal{R}(R)$ there is some $S' \in \mathcal{T}(R)$ for which $Q(S) \subsetneq  Q(S')$.
We estimate
\begin{align*}
\sum_{\substack{S \in \family \\ Q(S) \subset R}} \HD^d(Q(S)) & = \sum_{S \in \mathcal{T}(R)} \HD^d(Q(S)) + \sum_{S \in \mathcal{R}(R)} \HD^d(Q(S)) \\
& \le \sum_{S \in \mathcal{T}(R)} \HD^d(Q(S)) + \sum_{S' \in \mathcal{T}(R)} \left( \sum_{\substack{S \in \mathcal{R}(R) \\ Q(S) \subsetneq Q(S')}} \HD^d(Q(S)) \right)\\
& \le \sum_{S \in \mathcal{T}(R)} \HD^d(Q(S)) + \sum_{S' \in \mathcal{T}(R)} \left( \sum_{\substack{S \in \family \\ Q(S) \subsetneq Q(S')}} \HD^d(Q(S)) \right) \\
& \le \sum_{S \in \mathcal{T}(R)} \HD^d(Q(S))  + C \delta^{\theta_0} \sum_{S' \in \mathcal{T}(R)} \HD^d(Q(S')) \\
& = (1 + C \delta^{\theta_0}) \sum_{S \in \mathcal{T}(R)} \HD^d(Q(S)) \\
& \le (1 + C \delta^{\theta_0}) \HD^d(R).
\end{align*}
In the above, we used (\ref{eqn:ur_cor_10}) in the third inequality, and the fact that the cubes $Q(S)$ for $S \in \mathcal{T}(R)$ are disjoint and contained in $R$ in the last.
This shows (\ref{eqn:ur_cor_11}), and thus our proof is complete once we can justify how to construct the other $\family(R)$ as in Definition \ref{defn:corona} so that (\ref{cond:corona_topgr}) holds.

However, this last step is simple.
Notice that if $R \in \Delta_j$ where $\gen R = j = \gen R_0$, and also $R \subset (M/3) B_{R_0}$, then for $\delta_0$ sufficiently small, we have that $(M/3 \cd) B_R \subset M B_{R_0}$. Hence (\ref{eqn:ur_cor_1}) gives that
\begin{align*}
\HDE\left( \left( \dfrac{M}{3 \cd} B_R \right) \setminus \Gamma(S_0) \right) + \HD^d|_{\Gamma(S_0)} \left( \left( \dfrac{M}{3\cd} B_R \right) \setminus E \right) & \le C \delta^{1-d\theta'} (\diam R)^d,
\end{align*}
where $C >0$ depends only on $n, d$ and $C_D$.
In particular, we recall that this was the only condition we used on the Lipschitz graph $\Gamma(S_0)$ to be chosen for the top cube of $S_0$ in order to construct $\family$.
In particular, by simply taking $C >0$ larger in the conclusion of the Theorem, we can use this same Lipschitz graph $\Gamma(S_0)$ for each such $R$, and repeat the construction of $\family$ essentially verbatim to construction the partition $\family(R)$ of $\Delta(R)$ for each such $R$, finishing the proof of the Theorem. 
\end{proof}

\section{\texorpdfstring{$\delta$}{delta}-Corona decompositions imply \texorpdfstring{$\alpha_{\mu}(x,r)$}{alpha} are small}\label{sec:cor_alpha}

In this section, we show that \ref{cond:delta_cor} gives \ref{cond:alpha_c}, i.e., we prove Theorem \ref{thm:corona_alpha}.
Although the Carleson measure estimate we prove is a dyadic version of \ref{cond:alpha_c}, this discrete estimate appearing in Theorem \ref{thm:corona_alpha} implies the continuous one.
This estimate can be found in \cite[Lemma 5.9]{DFM19} so we omit the proof.
The first step in the proof is to obtain small-constant Carleson measure estimates on the $\alpha_\mu(x,r)$ when $\mu$ is a measure supported on a small-constant Lipschitz graphs with density close to $1$. 
This estimate is done in \cite{TOLSAALPHA} when $\mu$ is surface measure on the graph, but for completeness we fill in the gap when one takes $\mu$ slightly more general.

To state the Theorem in the same language as in \cite{TOLSAALPHA}, whenever $E = \spt \mu$ is $d$-Ahlfors regular, $\Delta$ is a system of dyadic cubes for $E$, and $Q \in E$, we abuse notation of $\alpha_\mu$ and set 
\begin{align}
\alpha_\mu(Q) \coloneqq \alpha_\mu(c_Q, 3 \, \diam Q) \label{eqn:alpha_cube}
\end{align}
where of course, $\alpha_\mu(c_Q, 3 \, \diam Q)$ is as in Definition \ref{defn:alpha}.
There is a very minor difference in $\alpha_\mu(Q)$ and that written in \cite{TOLSAALPHA}, where the normalization is taken with respect to the quantity $\ell(Q)$ (which is $\ell(Q) = 2^j$ when $Q \in \Delta_j$) in place of $3 \, \diam Q$, but since these quantities are comparable (with constant $C_D$) this difference is unimportant in the estimates that follow.
Let us state two main Theorems proved in \cite{TOLSAALPHA},  which we shall use in our proof of Theorem \ref{thm:corona_alpha}. 
First we recall the definition of ``large-constant'' uniformly rectifiable measures.
\begin{defn} \label{defn:ur}
Suppose that $\mu$ is a $d$-Ahlfors regular measure on $\R^n$. Then $\mu$ is said to be $d$-uniformly rectifiable (with constant $M >1$) if for each $x \in \spt \mu$ and $r >0$, there exists a Lipschitz map $f: B_d(0, r) \subset \R^d \ra \R^n$ with Lipschitz constant $\le M$, so that 
\begin{align*}
\mu(B(x,r) \cap f(B_d(0,r))) \ge M^{-1} r^d.
\end{align*}
\end{defn}

\begin{thm}[Theorem 1.2 in \cite{TOLSAALPHA}]\label{thm:tolsa_ur_alpha}
Let $\mu$ be a $d$-Ahlfors regular measure in $\R^n$ with constant $C_\mu$, and suppose that $\mu$ is $d$-uniformly rectifiable (with large constant, $M > 1$). Fix a system $\Delta$ of dyadic cubes for $\mu$ with constant $\cd > 0$. 
Then there is some constant $C_0 = C_0(n,d, C_\mu, M, \cd) >1$ so that one has the Carleson condition 
\begin{align}
\sup_{R_0 \in \Delta} \mu(R_0)^{-1} \sum_{Q \in \Delta(R_0)} \alpha_\mu(Q)^2 \mu(Q) \le C_0. \label{eqn:tolsa_ur_alpha}
\end{align}
\end{thm}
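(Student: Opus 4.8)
The plan is to reduce \eqref{eqn:tolsa_ur_alpha} to the case of a Lipschitz graph by means of a corona decomposition, and then to settle the graph case with Dorronsoro-type square function estimates. Since $\mu$ is $d$-Ahlfors regular and $d$-uniformly rectifiable (Definition \ref{defn:ur}), the corona construction of David and Semmes \cite{DSSIO, DSUR} furnishes, for any prescribed small $\kappa > 0$, a partition of $\Delta(R_0)$ into coherent stopping-time regions (``trees'') $S$, each carrying a $\kappa$-Lipschitz graph $\Gamma_S$ such that every $Q \in S$ has $B_Q \cap \spt \mu$ and $B_Q \cap \Gamma_S$ within Hausdorff distance $\epsilon_Q \diam Q$ of one another, where the numbers $(\epsilon_Q)_{Q \in S}$ obey a Carleson packing estimate along the tree, $\sum_{Q \in S} \epsilon_Q^2 \mu(Q) \le C \mu(Q(S))$; moreover the top cubes $\{Q(S)\}$ satisfy $\sum_{Q(S) \subset R'} \mu(Q(S)) \le C \mu(R')$ for every $R' \in \Delta(R_0)$. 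Since $\alpha_\mu(Q) \le C(n,d,\cmu)$ always by Ahlfors regularity, it then suffices to prove the single-tree bound $\sum_{Q \in S} \alpha_\mu(Q)^2 \mu(Q) \le C \mu(Q(S))$; summing this over all the trees and using the packing of the top cubes with $R' = R_0$ gives \eqref{eqn:tolsa_ur_alpha}.

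For a fixed tree $S$, write $\Gamma = \Gamma_S$ and $\sigma = \HD^d|_\Gamma$. Since $\mathcal{D}_{x,r}$ (Definition \ref{defn:wasserstein}) obeys the triangle inequality, for $Q \in S$ and any $\nu \in \mathrm{Flat}(n,d)$ one has $\mathcal{D}_{c_Q, 3\diam Q}(\mu, \nu) \le \mathcal{D}_{c_Q, 3\diam Q}(\mu, \sigma) + \mathcal{D}_{c_Q, 3\diam Q}(\sigma, \nu)$; optimizing over $\nu$ and normalizing as in Definition \ref{defn:alpha},
\begin{align*}
\alpha_\mu(Q) \le C\, (\diam Q)^{-d-1}\, \mathcal{D}_{c_Q, 3\diam Q}(\mu, \sigma) + \alpha_\sigma(Q).
\end{align*}
For the first term, the competitors defining $\mathcal{D}_{c_Q, 3\diam Q}$ are $1$-Lipschitz and supported in $B(c_Q,3\diam Q)$, so $\mathcal{D}_{c_Q, 3\diam Q}(\mu, \sigma)$ is controlled by the cost of transporting $\mu$ onto $\sigma$ inside a fixed dilate of $B_Q$: moving mass the distance $\epsilon_Q \diam Q$ costs at most $C\epsilon_Q (\diam Q)^{d+1}$, while the discrepancy between the \emph{amounts} of mass placed in $B_Q$ by $\mu$ and by $\sigma$, after subtracting the best constant multiple, is bounded by the fluctuation of the density $d\mu/d\HD^d|_{\spt\mu}$ along the tree, itself a Carleson error in the David--Semmes construction. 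Thus $(\diam Q)^{-d-1}\mathcal{D}_{c_Q, 3\diam Q}(\mu, \sigma) \le C\epsilon_Q + (\text{density fluctuation at } Q)$, and squaring and summing against $\mu(Q)$ over $Q \in S$ produces a bound $\le C\mu(Q(S))$ from the two Carleson packing properties.

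The remaining term $\sum_{Q \in S} \alpha_\sigma(Q)^2 \mu(Q)$ is the heart of the matter. As $\mu$ and $\sigma$ are comparable Ahlfors regular measures with nearly coincident supports, this sum is dominated by the analogous sum for $\sigma$ over a dyadic system on $\Gamma$, reducing everything to the purely graphical statement that $\alpha_\sigma$ satisfies a Carleson packing estimate over dyadic cubes of a $\kappa$-Lipschitz graph $\Gamma = \{(y, A(y)) : y \in \R^d\}$. Here one proves the pointwise bound that $\alpha_\sigma(Q)^2$ is at most a scale-normalized affine-approximation error for $A$ over a ball in $\R^d$ of radius comparable to $\diam Q$, plus the squared mean oscillation of $\nabla A$ there; the first obeys a Carleson packing estimate by Dorronsoro's theorem applied to the Lipschitz function $A$, and the second since $\nabla A \in \BMO$ with $\norm{\nabla A}_{\BMO} \le C\norm{\nabla A}_{L^\infty} \le C\kappa$. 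I expect this last, graphical estimate to be the main obstacle: in particular, correctly tracking the density factor $\sqrt{1 + |\nabla A|^2}$ when matching $\sigma$ against a \emph{constant} multiple of $d$-plane measure, which requires exploiting the cancellation built into the Wasserstein functional $\alpha_\sigma$ and absent from coarser quantities such as the $L^1$ beta numbers. This graphical estimate is precisely the content of the corresponding computation in \cite{TOLSAALPHA}; the corona reduction of the first two paragraphs is standard stopping-time machinery.
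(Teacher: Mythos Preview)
The paper does not prove this theorem at all: it is quoted verbatim as Theorem~1.2 of \cite{TOLSAALPHA} and used as a black box (most notably inside the proof of Theorem~\ref{thm:corona_alpha}, where it supplies the large-constant estimate \eqref{eqn:tolsa_ur_alpha} on the non-top trees). So there is no ``paper's own proof'' to compare against.

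Your sketch is a fair outline of Tolsa's original argument in \cite{TOLSAALPHA}: reduce via a David--Semmes corona decomposition to a single stopping-time tree attached to a Lipschitz graph, split $\alpha_\mu(Q)$ by the triangle inequality for $\mathcal{D}_{x,r}$ into a $\mu$-versus-$\sigma$ transport term and the graph coefficient $\alpha_\sigma(Q)$, and then handle the graph case by Dorronsoro-type estimates on the parametrizing function $A$ together with the BMO control on $\nabla A$. The step you flag as the main obstacle---tracking the density factor $\sqrt{1+|\nabla A|^2}$ when competing against a \emph{constant} multiple of plane measure---is exactly where Tolsa's computation lives, and the present paper in fact extracts precisely that graphical estimate as its Theorem~\ref{thm:tolsa_lip}. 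One caution: your transport bound for $\mathcal{D}_{c_Q,3\diam Q}(\mu,\sigma)$ is a bit casual. Moving mass a distance $\epsilon_Q\diam Q$ controls the error only on the common portion of the supports; one still has to account for the parts of $\spt\mu$ and $\Gamma$ that are not matched inside $3B_Q$, and in the large-constant regime those leftover pieces need not have small measure. In Tolsa's argument this is handled by the stopping-time construction itself (the minimal cubes of the tree carry the mismatch, and their packing is part of the corona Carleson condition), so make sure your ``density fluctuation'' term is really absorbing that contribution rather than just the oscillation of $d\mu/d\HD^d$.
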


\begin{thm}[Theorem 1.1 and Remark 4.1 that follows in \cite{TOLSAALPHA}] \label{thm:tolsa_lip}
Suppose that $\Gamma$ is a $d$-dimensional Lipschitz graph in $\R^n$ with constant $\delta < 1$, let $\mu = \HD^d|_\Gamma$, and suppose that $\Delta$ is a system of dyadic-cubes for $E$ with constant $\cd > 0$.
Then the following Carleson condition holds:
\begin{align*}
\sup_{R_0 \in \Delta} \mu(R_0)^{-1} \sum_{Q \in \Delta(R_0)} \alpha_\mu(Q)^2 \mu(Q) \le C_0 \delta^2,
\end{align*}
where $C_0 = C_0(n,d, \cd)$ is independent of $\delta$. 
\end{thm}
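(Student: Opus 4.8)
The plan is to reduce the estimate to a statement about the graph function and feed it into a Dorronsoro-type square-function inequality. First I would write $\Gamma=\{(y,A(y)):y\in\R^d\}$ for a Lipschitz map $A:\R^d\to\R^{n-d}$ with $\|\nabla A\|_\infty\le\delta$, and attach to every dyadic cube $Q\in\Delta$ its vertical shadow $\pi(Q)\subset\R^d$. Since $\sqrt{\det(I+\nabla A^T\nabla A)}=1+O(\delta^2)$ one has $\mu(Q)=(1+O(\delta^2))|\pi(Q)|$, and the shadow of $B(c_Q,3\diam Q)$ is comparable to a ball $B_Q'\subset\R^d$ of radius $\simeq\diam Q$. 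Recall that $\alpha_\mu(Q)=(3\diam Q)^{-d-1}\inf_\nu\mathcal{D}_{c_Q,3\diam Q}(\mu,\nu)$ over $\nu\in\mathrm{Flat}(n,d)$.

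For each $Q$ I would take the competitor $\nu_Q=\kappa_Q\,\HD^d|_{P_Q}$, where $P_Q$ is the graph of the affine map $L_Q$ that best approximates $A$ in $L^1(B_Q')$ and $\kappa_Q>0$ is chosen so that $\mu$ and $\nu_Q$ carry equal mass over $B_Q'$. To estimate $\mathcal{D}_{c_Q,3\diam Q}(\mu,\nu_Q)$ I would use the explicit transport plan ``push $\mu$ onto $P_Q$ by the vertical map $(y,A(y))\mapsto(y,L_Q(y))$, then rearrange the resulting measure on $P_Q$ to match the density $\kappa_Q$''. Testing against a $1$-Lipschitz $f$ supported in $B(c_Q,3\diam Q)$, the vertical step costs at most $C\int_{B_Q'}|A-L_Q|$, so after dividing by $(3\diam Q)^{d+1}$ and using the Poincar\'e inequality its contribution to $\alpha_\mu(Q)$ is $\le C(\diam Q)^{-1}\Omega_1 A(B_Q')$, where $\Omega_1 A(B):=\inf_{L\text{ affine}}\fint_B|A-L|$. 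Granting that the density-rearrangement step is controlled by the same type of quantity (see below), one obtains $\alpha_\mu(Q)\le C(\diam Q)^{-1}\Omega_1 A(B_Q')$, and the rest is routine: by the localized dyadic form of Dorronsoro's theorem (valid since $A$ is Lipschitz, hence in $\dot{W}^{1,2}_{\mathrm{loc}}$),
\[
\sum_{Q\in\Delta(R_0)}\Big((\diam Q)^{-1}\,\Omega_1 A(B_Q')\Big)^2|\pi(Q)|\le C\|\nabla A\|_{L^2(\widetilde{R_0})}^2\le C\delta^2|\widetilde{R_0}|\le C\delta^2\mu(R_0)
\]
for a bounded dilation $\widetilde{R_0}\supset\pi(R_0)$, which is the asserted Carleson bound with $C_0=C_0(n,d,\cd)$.

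The step I expect to be the main obstacle is the density-rearrangement term, that is, the transport on $P_Q$ between the push-forward of $\mu$ --- whose density with respect to $\HD^d|_{P_Q}$ is $\sqrt{\det(I+\nabla A^T\nabla A)}/\sqrt{\det(I+\nabla L_Q^T\nabla L_Q)}$ --- and $\nu_Q$. The naive bound uses only that this discrepancy is $O(\delta^2)$ in $L^\infty$, which gives a contribution of order $\delta^2$ to $\alpha_\mu(Q)$, a constant whose Carleson sum $\sum_Q\delta^4\mu(Q)$ diverges; so that is not enough. Instead one must use the structure of the area element: $\sqrt{\det(I+\nabla A^T\nabla A)}-1$ is genuinely quadratic in $\nabla A$ and, once its mean over $B_Q'$ is removed, it only ``lives'' at scales finer than $\diam Q$, so when paired against a $1$-Lipschitz (hence effectively low-frequency) test function its effect on $\alpha_\mu(Q)$ is controlled by $\delta$ times $\Omega_1$-type coefficients of $A$ at $Q$ and at somewhat smaller scales, with geometric decay in the scale gap. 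A Schur-test/Cauchy--Schwarz argument then shows these extra terms sum to at most $C\delta^4\mu(R_0)$, dominated by the main term. Carrying this density analysis out carefully is exactly the content of the Lipschitz-graph case of \cite[Theorem 1.1]{TOLSAALPHA}; one also checks there (Remark 4.1) that all constants are independent of $\delta$, which is what produces the clean power $\delta^2$ in the statement.
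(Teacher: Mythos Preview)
The paper does not actually prove this theorem: it is quoted as Theorem 1.1 and Remark 4.1 of \cite{TOLSAALPHA}, and the only content the paper adds is the one-paragraph localization remark immediately following the statement (replace $\Gamma$ by a compactly supported $(C\delta)$-Lipschitz graph $\Gamma'$ agreeing with $\Gamma$ on $10B_{R_0}$, apply Tolsa's global result to $\Gamma'$, and use that $\alpha_\mu(Q)$ only sees $3B_{R_0}$). So there is no ``paper's own proof'' to compare against beyond this reduction.

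Your proposal goes much further and sketches the actual mechanism behind Tolsa's argument: parametrize by $A:\R^d\to\R^{n-d}$, compete with the affine graph $P_Q$, bound the vertical transport by $\Omega_1 A(B_Q')$, and finish with Dorronsoro. That is indeed the skeleton of \cite{TOLSAALPHA}, and your identification of the density/area-element term as the real obstruction is exactly right. The issue is that your treatment of that term is not a proof but a deferral: you say the naive $O(\delta^2)$ bound fails to sum, then assert that a finer frequency analysis plus Schur test works, and explicitly point to \cite{TOLSAALPHA} for the details. Since the statement \emph{is} Tolsa's theorem, this is circular as written. If you want a self-contained argument you must actually carry out that step --- e.g.\ expand $\sqrt{\det(I+\nabla A^T\nabla A)}-1$ as a quadratic form in $\nabla A$, subtract its average over $B_Q'$, and control the pairing with a $1$-Lipschitz test function by a telescoping sum of $\beta$-type coefficients of $\nabla A$ over smaller scales; this is where the extra factor of $\delta$ appears and what makes the sum converge. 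Absent that, what you have is a correct outline of Tolsa's proof together with an honest flag on the hard step, which is more than the present paper attempts but not yet an independent proof.
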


To be totally transparent, Remark 4.1 in \cite{TOLSAALPHA} is stated for true dyadic cubes in $\R^n$ that meet the Lipschitz graph $\Gamma$, and the result is stated as a global Carleson packing condition for compactly supported Lipschitz graphs.
However, it is straight-forward to deduce the Theorem above from how Remark 4.1 is stated.
Indeed, one can obtain a local result from a global one by fixing an initial cube $R_0$ of $\Gamma$, and finding a $(C\delta)$-Lipschitz graph $\Gamma'$ that agrees with $\Gamma$ in $10 B_{R_0}$ and has support in $20 B_{R_0}$.
Then Theorem \ref{thm:tolsa_lip} applied to $\Gamma'$ gives the result, since the $\alpha_\mu(Q)$ are local to $3B_{R_0}$ anyway. 

From here, we can extend this small-constant estimate to measures of the form $d\mu(x) = g(x) \; d\HD^d|_\Gamma(x)$ where $g(x)$ is some controlled density, using the following two Lemmas.

\begin{lemma}\label{lemma:osc}
Suppose that $\mu$ is a $d$-Ahlfors regular in $\R^n$ with constant $C_\mu > 0$, and $\Delta$ is a system of dyadic cubes for $\spt \mu$ with constant $\cd >0$.
Then for any $g \in L^2_{\mathrm{loc}}(\mu)$, the coefficients $\osc_{\mu, g}(Q)$ defined on the cubes $Q$ by
\begin{align} \label{eqn:oscdef}
\osc_{\mu, g}(Q) \coloneqq ( \diam Q) ^{-d-1} \inf_{\lambda \in \R } \sup_{f \in \Lambda(3 B_Q )} \abs{\int   fg - \lambda f \; d\mu }
\end{align}
satisfy the Carleson condition
\begin{align*}
\sup_{R_0 \in \Delta} \mu(R_0)^{-1} \sum_{Q \in \Delta(R_0)} \osc_{\mu, g}(Q)^2 \mu(Q) \le C_0 \left(  \fint_{C_0 R_0} \abs{g - (g)_{C_0 R_0}}^2 \; d\mu  \right).
\end{align*}
Here, $C_0 = C_0(n, d, \cmu, \cd) > 0 $ is independent of $g$. 
\end{lemma}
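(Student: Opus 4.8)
The plan is to reduce the Carleson estimate for the oscillation coefficients $\osc_{\mu,g}(Q)$ to a dyadic Carleson embedding argument, where the ``energy'' at a cube $Q$ is compared to a martingale-type difference of the averages of $g$ along the dyadic filtration. First I would observe that for a fixed cube $Q$, a natural choice of the scalar $\lambda$ in \eqref{eqn:oscdef} is the average $(g)_{3B_Q} = \mu(3B_Q)^{-1}\int_{3B_Q} g\,d\mu$ (or $(g)_Q$, up to a harmless change of constants by Ahlfors regularity). With this choice, for any $f \in \Lambda(3B_Q)$ one has $\abs{f} \le 3\diam Q$ on $\spt f$, so
\[
\abs{\int (fg - \lambda f)\,d\mu} \le 3\diam Q \int_{3B_Q} \abs{g - (g)_{3B_Q}}\,d\mu \le C(\diam Q)^{d+1}\left(\fint_{3B_Q}\abs{g - (g)_{3B_Q}}^2\,d\mu\right)^{1/2},
\]
using Ahlfors regularity and Cauchy--Schwarz. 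Hence $\osc_{\mu,g}(Q)^2 \le C\,\fint_{3B_Q}\abs{g-(g)_{3B_Q}}^2\,d\mu$, and it remains to bound $\mu(R_0)^{-1}\sum_{Q\in\Delta(R_0)}\left(\fint_{3B_Q}\abs{g-(g)_{3B_Q}}^2\,d\mu\right)\mu(Q)$.

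The second step is the standard Carleson/John--Nirenberg-flavored estimate: I would expand $\abs{g - (g)_{3B_Q}}^2$ and use the dyadic structure to telescope. Concretely, replacing $3B_Q$ by a controlled union of dyadic cubes of comparable size (possible since the $3B_Q$ have bounded overlap and comparable diameter to $Q$), one reduces to estimating $\sum_{Q\in\Delta(R_0)}\left(\fint_Q \abs{g - (g)_Q}^2\,d\mu\right)\mu(Q)$ together with the error from replacing $(g)_{3B_Q}$ by $(g)_Q$ and from the enlargement, each of which is controlled by the same type of sum on cubes of a fixed larger dilate. For the main term, writing $(g)_Q - (g)_{R_0} = \sum$ of successive dyadic differences along the chain from $Q$ up to $R_0$, and using orthogonality of the dyadic martingale differences in $L^2(\mu)$ — i.e. $\fint_Q \abs{g - (g)_Q}^2 \,d\mu$ controls $\sum_{Q'\subset Q} \abs{(g)_{Q'} - (g)_{\widehat{Q'}}}^2 \mu(Q')/\mu(Q)$ where $\widehat{Q'}$ is the parent — one gets, after summing in $Q$ and interchanging the order of summation,
\[
\sum_{Q\in\Delta(R_0)}\left(\fint_Q\abs{g - (g)_{R_0}}^2\,d\mu\right)\mu(Q) \le C\,\mu(R_0)\fint_{R_0}\abs{g - (g)_{R_0}}^2\,d\mu;
\]
the replacement of $(g)_Q$ by $(g)_{R_0}$ only improves the bound since $(g)_Q$ is the $L^2$-best constant on $Q$. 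Finally, absorbing the factor $\abs{\spt f}\le 3\diam Q$ correctly and tracking the enlargement $3B_Q \subset C_0 R_0$ for $Q\in\Delta(R_0)$ (valid since $\diam Q \le \diam R_0$ and $Q\subset R_0$) yields the claimed bound with $C_0$ depending only on $n, d, \cmu, \cd$.

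The main obstacle I anticipate is purely bookkeeping rather than conceptual: controlling the passage from the ball-averages $3B_Q$ appearing in the definition of $\osc_{\mu,g}$ to genuine dyadic-cube averages, so that the orthogonality of dyadic martingale differences can be invoked cleanly. One must check that the bounded-overlap of the $3B_Q$ (which follows from \eqref{cond:dyadic2} and Ahlfors regularity) lets one dominate the ball-average sum by a cube-average sum over cubes in $\Delta(CR_0)$ for a fixed dilate $C$, and then that restricting to $\Delta(R_0)$ versus $\Delta(CR_0)$ only costs a constant — here one uses that $R_0$ is comparable to boundedly many cubes of its own generation. None of this is deep, but it is where the constant $C_0$ acquires its dependence on $\cd$, and it is the step most prone to slippage; everything else is the classical Carleson embedding / $\mathrm{BMO}$-type argument.
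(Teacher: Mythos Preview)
Your first step is too crude and the resulting sum does not converge. Bounding $\abs{f}\le 3\diam Q$ gives $\osc_{\mu,g}(Q)^2\le C\fint_{3B_Q}\abs{g-(g)_{3B_Q}}^2\,d\mu$, but then
\[
\sum_{Q\in\Delta(R_0)}\left(\fint_{Q}\abs{g-(g)_Q}^2\,d\mu\right)\mu(Q)
=\sum_{Q\in\Delta(R_0)}\sum_{Q'\in\Delta(Q)}\norm{\Delta_{Q'}g}_{L^2(\mu)}^2
=\sum_{Q'\in\Delta(R_0)}\norm{\Delta_{Q'}g}_{L^2(\mu)}^2\cdot\#\{Q:Q'\subset Q\subset R_0\},
\]
and the number of ancestors of $Q'$ in $R_0$ is unbounded, so this diverges for any nonconstant $g$. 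Your displayed inequality with $(g)_{R_0}$ in place of $(g)_Q$ fails for the same reason: $\sum_{Q\in\Delta(R_0)}\int_Q\abs{g-(g)_{R_0}}^2\,d\mu$ equals $\int_{R_0}\abs{g-(g)_{R_0}}^2\,d\mu$ summed once per generation, which is infinite.

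The point you are missing is that the Lipschitz condition on $f$ must be exploited at \emph{small} scales, not just at scale $\diam Q$. The paper decomposes $g-(g)_{Q_R}=\sum_{S}\Delta_S g$ as a martingale difference sum over a dyadic system containing $3B_Q$ (using adjacent dyadic grids so that $3B_Q$ sits inside a genuine dyadic cube of comparable size), and then uses $\int \Delta_S g\,d\mu=0$ to write $\int f\,\Delta_S g\,d\mu=\int(f-f(z_S))\,\Delta_S g\,d\mu$, which is bounded by $C(\diam S)^{1+d/2}\norm{\Delta_S g}_{L^2(\mu)}$. The gain of a factor $\diam S/\diam Q$ (rather than $1$) is exactly what makes the double sum over $Q$ and $S$ converge after Cauchy--Schwarz and swapping the order of summation; orthogonality of the $\Delta_S g$ then collapses everything to $\norm{g-(g)_{CR_0}}_{L^2}^2$. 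Your ``bookkeeping obstacle'' of passing from balls to dyadic cubes is real and is handled in the paper via adjacent dyadic systems, but the essential analytic gap is the missing factor of $\diam S$.
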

\begin{proof}
The idea is to use a dyadic Martingale decomposition of $g$ with respect to the dyadic cubes for $E = \spt \mu$, but for a family of adjacent dyadic cubes for $E$ rather than the single system from Section \ref{subsec:lattice} (we shall see the flexibility this gives us shortly).
Lemma 2.2 in \cite{AD20} (which uses Theorems 2.9 and 5.9 in \cite{HT14}) imply that there exists $\delta \in (0,1)$ small, and $M \in \N$, $C_{\mathcal{D}} > 1$, large depending only on $n, d$ and $C_\mu$, so that the following holds. For each $\omega \in \{1, \dotsc, M\}$, one can find a system of $\mathcal{D}(\omega) = \cup_{j \in \Z} \mathcal{D}_j(\omega)$ of ``dyadic cubes'' for $E$ such that 
\begingroup
\addtolength{\jot}{0.5em}
\begin{align}
& \text{ \parbox[t]{0.8 \textwidth}{for all $j \in \Z$, $E = \cup_{Q \in \mathcal{D}_j(\omega)} Q,$ } }  \\
& \text{ \parbox[t]{0.8 \textwidth}{if $R, R' \in \mathcal{D}_j(\omega)$ and $R \ne R'$, then $\HD^d(R \cap R') = 0$, } }  \\
& \text{ \parbox[t]{0.8 \textwidth}{for each $j \le \ell$ and $Q \in \mathcal{D}_\ell(\omega)$, one has $Q = \cup_{R \subset Q, R \in \mathcal{D}_j(\omega)}R$, } }  \\
& \text{ \parbox[t]{0.8 \textwidth}{each $Q \in \mathcal{D}_j(\omega)$ has a ``center,'' $z_Q$ so that $B(z_Q, \delta^j/5) \cap E \subset Q \subset B(z_Q, 3 \delta^j)$. Consequently, $\HD^d(Q) \simeq_{\cmu} \delta^{jd}$.} }  
\end{align}
\endgroup
These first properties are essentially the same as the system $\Delta$ from Section \ref{subsec:lattice}, but here the $\{ \mathcal{D}(\omega) \; : \; 1 \le \omega \le M\}$ also satisfy 
\begin{align}
& \text{ \parbox[t]{0.8 \textwidth}{for any $x \in E, r >0$, there is some choice of $1 \le \omega \le M$, $j \in \Z$ with $C_{\mathcal{D}}^{-1} r \le \delta^j \le C_{\mathcal{D}} r$, and $Q \in \mathcal{D}_j(\omega)$ so that $B(x,r) \subset Q$.} }  \label{cond:dyadicp5}
\end{align}
We shall use the family of dyadic systems $\{ \mathcal{D}(\omega)\}_{\omega=1}^{M}$ in conjunction with the fixed dyadic system $\Delta$ from Section \ref{subsec:lattice} to prove the result.

Let $Q_0 \in \Delta$ be a dyadic cube of $E$ from Section \ref{subsec:lattice}, and fix any cube $R \subset Q_0$.
By \eqref{cond:dyadicp5}, there exists some $1 \le \omega_R \le M$, $j_R \in \Z$ with $\delta^{j_R} \simeq_{C_{\mathcal{D}}} \diam R$, and $Q_R \in \mathcal{D}_{j_R}(\omega_R)$ so that $3B_R \subset Q_R$.
Now since $g \chi_{Q_R} \in L^2(\mu)$, we may write
\begin{align}
(g - (g)_{Q_R})  \chi_{Q_R} & = \sum_{S \in \mathcal{D}(\omega_R, Q_R)} \Delta_{S} g  \label{eqn:oscred1}
\end{align}
with convergence in $L^2(\mu)$, where by definition, $\mathcal{D}(\omega_R, Q_R)$ are the subcubes of $Q_R$ (in $\mathcal{D}(\omega_R)$), \[\Delta_S g = \sum_{S' \in \mathrm{child}(S)} ((g)_{S'} - (g)_S) \chi_{S'},\] and $(g)_A = \fint_A g \; d\mu$. Moreover, one has that \[ \|(g - (g)_{Q_R}) \chi_{Q_R} \|_{L^2(\mu)}^2 = \sum_{S \in \mathcal{D}(\omega_R, Q_R)} \| \Delta_S g \|_{L^2(\mu)}^2, \] since the terms on the right-hand side of \eqref{eqn:oscred1} are pairwise orthogonal in $L^2(\mu)$.

Now since $\osc_{\mu, g}(R) = \osc_{\mu, g + a} (R)$ for any constant $a$, we may assume that $(g)_{Q_R} = 0$ in our estimate of $\osc_{\mu, g}(R)$. Fix $f \in \Lambda(3B_R)$, and note that since $f$ is $1$-Lipschitz and $\spt f \subset Q_R$,
\begin{align*}
\left | \int gf \; d\mu \right| & \le \sum_{S \in \mathcal{D}(\omega_R, Q_R)} \left| \int \Delta_S g f \; d \mu \right | \\
& = \sum_{S \in \mathcal{D}(\omega_R, Q_R)} \left| \int \Delta_S g (f - f(z_S)) \; d \mu \right | \\
& \le \cmu \sum_{S \in \mathcal{D}(\omega_R, Q_R)} (\diam S)^{1+d/2} \| \Delta_S g \|_{L^2(\mu)}.
\end{align*}
Since $f$ was arbitrary, taking $\lambda = 0$ in (\ref{eqn:oscdef}) and recalling that $\mu(R) \simeq_{\cmu} (\diam R)^d$ we obtain for some constant $C$ depending only on $n, d$ and $\cmu$,
\begin{align*}
\osc_{\mu, g}(R)^2 \mu(R) & \le C \left( \sum_{S \in \mathcal{D}(\omega_R, Q_R)} \dfrac{(\diam S)^{1 + d/2} }{(\diam R)}  ||\Delta_S g||_{L^2(\mu)}\right)^2 \dfrac{1}{\mu(R)} .
\end{align*}
Using the fact that $\diam Q_R \simeq_{C_{\mathcal{D}}} \diam R$, the Cauchy-Schwarz inequality, and the estimate $\left( \sum_{S \in \mathcal{D}(\omega_R, Q_R)} \left(\dfrac{\diam S}{ \diam R} \right) \mu(S) \right) \le C \mu(Q_R) \le C \mu(R)$ give
\begin{align}
\sum_{R \in \Delta(Q_0)}  \osc_{\mu, g}(R)^2 \mu(R) & \le C \sum_{R \in \Delta(Q_0)}   \left( \sum_{S \in \mathcal{D}(\omega_R, Q_R)} \left(\dfrac{\diam S}{ \diam R}\right) ||\Delta_S g||_{L^2(\mu)}^2 \right) \nonumber \\
& \times  \left( \sum_{S \in \mathcal{D}(\omega_R, Q_R)} \left(\dfrac{\diam S}{ \diam R} \right) \mu(S) \right)  \dfrac{1}{\mu(R)} \nonumber \\
& \le C \sum_{R \in \Delta(Q_0)}  \left( \sum_{S \in \mathcal{D}(\omega_R, Q_R)} \left(\dfrac{\diam S}{ \diam Q_R}\right) ||\Delta_S g||_{L^2(\mu)}^2 \right).  \label{eqn:osc1}
\end{align}
Setting $\mathcal{I}(\omega) = \{ Q \in \mathcal{D}(\omega) \; : \; Q \subset C_2 Q_0 \}$ for some large $C_2 >1$ depending only on $n, d, \cmu, \cd$ and $C_{\mathcal{D}}$, we can crudely estimate \eqref{eqn:osc1} by
\begin{align}
C \sum_{\omega=1}^M \sum\limits_{Q \in \mathcal{I}(\omega)} \sum_{S \in \mathcal{D}(\omega, Q)} \left(\dfrac{\diam S}{ \diam Q}\right) ||\Delta_S g||_{L^2(\mu)}^2 \label{eqn:osc2}
\end{align}
for some large constant $C>1$ depending on the same parameters. This is because each term in the summand of \eqref{eqn:osc1} appears in \eqref{eqn:osc2}, and to each term in \eqref{eqn:osc2} there are only finitely many terms coming from \eqref{eqn:osc1} which are associated to those of \eqref{eqn:osc2}, by Ahlfors regularity and the fact that $\diam Q_R \simeq_{C_{\mathcal{D}}} R$. 

Finally, switching the order of summation in \eqref{eqn:osc2} and using the fact that  \[\sum_{Q \in \mathcal{I}(\omega), Q \supset S} \left( \dfrac{\diam S}{\diam Q} \right) \le C,\] we see that
\begin{align*}
\sum_{R \in \Delta(Q_0)}\osc_{\mu, g}(R)^2 \mu(R) &  \le C  \sum_{\omega=1}^M \sum_{Q \in \mathrm{top}(\mathcal{I}(\omega))} \sum_{S \in \mathcal{D}(\omega, Q)} || \Delta_S g||_{L^2(\mu)}^2 \\
& \le  C \sum_{\omega=1}^M \sum_{Q \in \mathrm{top}(\mathcal{I}(\omega))}  \| (g - (g)_Q) \chi_Q    \|_{L^2(\mu)}^2 \\
& \le C \mu(Q_0) \left(  \fint_{C_2 Q_0} \abs{ g - (g)_{C_2 Q_0}}^2 \; d\mu  \right) 
\end{align*}
where $\mathrm{top}(\mathcal{I}(\omega))$ are the maximal cubes in $\mathcal{I}(\omega)$ whose boundaries intersect in sets of zero $\HD^d$ measure. This completes the proof, since the constants $\delta, M$ and $C_{\mathcal{D}}$ depend only on $n,d$ and $\cmu$.
\end{proof}

\begin{lemma} \label{lemma:alpha_density}
Suppose that $\mu$ is a $d$-Ahlfors regular measure with constant $(1 + \delta) < 2$.
Moreover, assume that $E = \spt \mu$ is $d$-rectifiable.
Fix $\Delta$, a system of dyadic cubes for $E$ with constant $\cd > 0$.
Then for the function $g = d\mu/d\HD^d|_E$, we have
\begin{align*}
C_0^{-1} \left( - C_0 \osc_{\mu, g^{-1}}(Q) + \alpha_{\HD^d|_E}(Q) \right) \le \alpha_{\mu}(Q) \le C_0 \left ( \osc_{\HD^d|_E, g}(Q) + \alpha_{\HD^d|_E}(Q) \right),
\end{align*}
where $C_0 = C_0(n,d, \cd) > 0$ is independent of $\mu$ and $\delta$.
\end{lemma}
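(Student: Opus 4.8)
The plan is to exploit the two identities $d\mu = g\,d\HD^d|_E$ and $d\HD^d|_E = g^{-1}\,d\mu$ and to reduce everything to the triangle inequality for the functional $\mathcal{D}_{c_Q,3\diam Q}$. First I would record the facts we need: by Lemma~\ref{lemma:adr_m_adr_set} the densities satisfy $\tfrac12 \le g, g^{-1}\le 2$ a.e.\ and $E=\spt\mu$ is $d$-Ahlfors regular with constant $<4$; and directly from the definitions (comparing $\mu$ with the flat measure $\bigl(\mu(3B_Q)/(3\diam Q)^d\bigr)\HD^d|_{P}$ for a plane $P$ through $c_Q$) one sees that $\alpha_{\HD^d|_E}(Q)$, $\alpha_\mu(Q)$, $\osc_{\HD^d|_E,g}(Q)$ and $\osc_{\mu,g^{-1}}(Q)$ are all bounded by a dimensional constant. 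Consequently, for the upper bound I may assume $\osc_{\HD^d|_E,g}(Q)<c_1$ and for the lower bound $\osc_{\mu,g^{-1}}(Q)<c_1$, for a small constant $c_1$ to be fixed: if the relevant oscillation is $\ge c_1$, the desired inequality is immediate from the uniform bound on $\alpha$ after enlarging $C_0$.

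For the upper bound, choose a near-optimal flat measure $\nu_0 = a_0\HD^d|_{P_0}\in\mathrm{Flat}(n,d)$ for $\alpha_{\HD^d|_E}(Q)$ and a near-optimal constant $\lambda_0\in\R$ for $\osc_{\HD^d|_E,g}(Q)$. Testing $\osc$ against the single $1$-Lipschitz bump $f_0(x)=\max\{0,\diam Q-\dist(x,B_Q)\}\in\Lambda(3B_Q)$, which equals $\diam Q$ on $B_Q$, and using $\HD^d|_E(B_Q)\ge \tfrac14(\diam Q)^d$ together with $\tfrac12\le g\le 2$, forces $\dist(\lambda_0,[\tfrac12,2])\le C\,\osc_{\HD^d|_E,g}(Q)$; hence in the reduced regime $\lambda_0>0$, so that $\lambda_0\nu_0\in\mathrm{Flat}(n,d)$. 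Now for every $f\in\Lambda(3B_Q)$ one writes
\[
\int f\,d\mu-\lambda_0\int f\,d\nu_0 = \int f(g-\lambda_0)\,d\HD^d|_E + \lambda_0\Bigl(\int f\,d\HD^d|_E-\int f\,d\nu_0\Bigr),
\]
and bounds the first term by $(\osc_{\HD^d|_E,g}(Q)+\varepsilon)(\diam Q)^{d+1}$ and the second by $\lambda_0\,\mathcal{D}_{c_Q,3\diam Q}(\HD^d|_E,\nu_0)\le C(\alpha_{\HD^d|_E}(Q)+\varepsilon)(\diam Q)^{d+1}$. Taking the supremum over $f$, dividing by $(3\diam Q)^{d+1}$ and letting $\varepsilon\downarrow 0$ yields $\alpha_\mu(Q)\le C_0\bigl(\osc_{\HD^d|_E,g}(Q)+\alpha_{\HD^d|_E}(Q)\bigr)$.

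The lower bound is symmetric: take a near-optimal flat measure $\nu_1\in\mathrm{Flat}(n,d)$ for $\alpha_\mu(Q)$ and a near-optimal $\lambda_1$ for $\osc_{\mu,g^{-1}}(Q)$ (again positive in the reduced regime, by the same bump-function argument applied to $g^{-1}$ and $\mu$, using $\mu(B_Q)\ge\tfrac14(\diam Q)^d$), and decompose, for $f\in\Lambda(3B_Q)$,
\[
\int f\,d\HD^d|_E-\lambda_1\int f\,d\nu_1 = \int f(g^{-1}-\lambda_1)\,d\mu + \lambda_1\Bigl(\int f\,d\mu-\int f\,d\nu_1\Bigr).
\]
Estimating exactly as above gives $\alpha_{\HD^d|_E}(Q)\le C_0\bigl(\osc_{\mu,g^{-1}}(Q)+\alpha_\mu(Q)\bigr)$, which rearranges to the left-hand inequality in the statement.

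The main obstacle is not the estimates but the bookkeeping that makes them legitimate: one must check that the near-optimal multiplier $\lambda$ is strictly positive, so that the rescaled plane measure $\lambda\nu$ genuinely belongs to $\mathrm{Flat}(n,d)$ — this is precisely what the reduction to the ``small'' regime and the single-bump test function are designed to guarantee. Everything else is the triangle inequality for $\mathcal{D}_{c_Q,3\diam Q}$ together with the uniform bounds on $\alpha$ and $\osc$ recorded at the outset.
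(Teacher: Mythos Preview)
Your proof is correct and follows essentially the same route as the paper's: write $d\mu = g\,d\HD^d|_E$, split $\int f\,d\mu - \int f\,d\nu$ into $\int f(g-\lambda)\,d\HD^d|_E$ plus $\lambda$ times a term controlled by $\alpha_{\HD^d|_E}$, and then reverse the roles of $\mu$ and $\HD^d|_E$ for the other inequality. Your reduction to the small-oscillation regime and the bump-function test to force $\lambda>0$ are a more careful justification of what the paper handles in one parenthetical remark (``which one readily sees must be in $[(1+\delta)^{-1},(1+\delta)]$, by the bounds on $g$''); otherwise the arguments coincide.
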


\begin{proof}
By virtue of Lemma \ref{lemma:adr_m_adr_set}, we know that the densities $d \HD^d|_E/d\mu$ and $d\mu/d\HD^d|_E$ exist and satisfy 
\begin{align*}
(1+ \delta)^{-1} \le \dfrac{d\HD^d|_E}{d\mu}, \dfrac{d\mu}{d\HD^d|_E} \le (1+\delta)
\end{align*}
$\mu$-almost everywhere.
Set $g \coloneqq d\mu/d\HD^d|_E$.
We readily compute for any dyadic cube $Q \in \Delta$, any $f \in \Lambda(3 B_Q)$, $\lambda \in \R$ and $\nu$ a $d$-flat measure,
\begin{align*}
\abs{ \int f ( d \mu -d \nu) } &  = \abs{ \int f \left( g d \HD^d|_E -d \nu \right) }  \\
& \le \abs{ \int f \left (  g - \lambda \right ) d\HD^d|_E } + \abs{ \int f \left( \lambda d\HD^d|_E - d\nu \right ) }.
\end{align*}
Taking the infimum over $\lambda \in \R$ which minimizes the quantity $\osc_{\HD^d|_E, g}$ (which one readily sees must be in $[(1+\delta)^{-1}, (1+ \delta)]$, by the bounds on $g$), and then taking the infimum over flat measures in the definition of $\alpha_{\HD^d|_E}$, we obtain
\begin{align*}
\alpha_{\mu}(Q) \le C \left( \osc_{\HD^d|_E,g}(Q)  + \alpha_{\HD^d|_\Gamma}(Q) \right),
\end{align*}
with constant $C >0$ depending on $n, d$ and $\cd$.
Reversing the roles of $\mu$ and $\HD^d|_E$ gives the other inequality.
\end{proof}

Putting together the previous Lemmas and Theorem \ref{thm:tolsa_lip}, we obtain our small-constant Carleson measure estimate we shall use in proving Theorem \ref{thm:corona_alpha}.
\begin{cor} \label{cor:alpha_lip}
Suppose that $\Gamma$ is a $d$-dimensional $\delta$-Lipschitz graph in $\R^n$, and $\mu$ is a $d$-Ahlfors regular measure with support $\Gamma$ and with constant $(1 + \delta)$.
Fix $\Delta$, a system of dyadic cubes for $\Gamma$ with constant $\cd$.
Then one has the Carleson packing condition,
\begin{align*}
\sup_{R_0 \in \Delta} \mu(R_0)^{-1} \sum_{Q \in \Delta(R_0)} \alpha_\mu(Q)^2 \mu(Q) \le C_0 \delta^2.
\end{align*}
Here $C_0 = C_0(n,d, \cd) > 0$ is independent of $\mu$ and $\delta$. 
\end{cor}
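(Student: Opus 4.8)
The plan is to deduce the estimate from Theorem~\ref{thm:tolsa_lip}, which already gives the small-constant Carleson bound for surface measure $\HD^d|_\Gamma$ on $\Gamma$ itself, by controlling the difference between the coefficients $\alpha_\mu$ and $\alpha_{\HD^d|_\Gamma}$. The key observation is that $\mu = g\,\HD^d|_\Gamma$ for a density $g$ that is $O(\delta)$-close to being constant, so the passage from $\HD^d|_\Gamma$ to $\mu$ costs only another factor of $\delta^2$. Concretely, I would chain together Lemma~\ref{lemma:alpha_density}, which bounds $\alpha_\mu(Q)$ by $C_0\bigl(\osc_{\HD^d|_\Gamma,g}(Q)+\alpha_{\HD^d|_\Gamma}(Q)\bigr)$, with the oscillation Carleson estimate of Lemma~\ref{lemma:osc} for the first term and Theorem~\ref{thm:tolsa_lip} for the second.

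First I would collect the preliminary facts. Since $\Gamma$ is a $d$-dimensional $\delta$-Lipschitz graph with $\delta<1$, it is $d$-rectifiable and $\HD^d|_\Gamma$ is $d$-Ahlfors regular with a constant bounded by a dimensional constant; hence every constant produced by applying Lemmas~\ref{lemma:adr_m_adr_set},~\ref{lemma:osc}, and~\ref{lemma:alpha_density} to $\HD^d|_\Gamma$ depends only on $n$, $d$, and $\cd$. Next, $\mu$ is $d$-Ahlfors regular with constant $(1+\delta)<2$ and $\spt\mu=\Gamma$ is rectifiable, so Lemma~\ref{lemma:adr_m_adr_set} gives that $g:=d\mu/d\HD^d|_\Gamma$ exists and satisfies the sharp two-sided bound $(1+\delta)^{-1}\le g\le (1+\delta)$ $\HD^d|_\Gamma$-a.e., together with $\mu(A)\simeq\HD^d|_\Gamma(A)$ with constants $(1+\delta)^{\pm1}$ for every Borel set $A$; in particular $\mu(Q)\simeq\HD^d|_\Gamma(Q)$ on all dyadic cubes.

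With these in hand, for a fixed $R_0\in\Delta$ I would apply the upper bound of Lemma~\ref{lemma:alpha_density} (with $E=\Gamma$), square it, sum against $\mu(Q)\le 2\,\HD^d|_\Gamma(Q)$ over $Q\in\Delta(R_0)$, and split the result into the oscillation sum $\sum_{Q\in\Delta(R_0)}\osc_{\HD^d|_\Gamma,g}(Q)^2\HD^d|_\Gamma(Q)$ and the flat sum $\sum_{Q\in\Delta(R_0)}\alpha_{\HD^d|_\Gamma}(Q)^2\HD^d|_\Gamma(Q)$. Theorem~\ref{thm:tolsa_lip} applied to $\HD^d|_\Gamma$ bounds the flat sum by $C_0\delta^2\,\HD^d|_\Gamma(R_0)$. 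Lemma~\ref{lemma:osc} bounds the oscillation sum by $C_0\bigl(\fint_{C_0R_0}\abs{g-(g)_{C_0R_0}}^2\,d\HD^d|_\Gamma\bigr)\HD^d|_\Gamma(R_0)$, and since $(1+\delta)^{-1}\le g\le(1+\delta)$ the oscillation of $g$ is at most $(1+\delta)-(1+\delta)^{-1}\le 3\delta$, so this is also at most $C_0\delta^2\,\HD^d|_\Gamma(R_0)$. Adding the two contributions, dividing by $\mu(R_0)\simeq\HD^d|_\Gamma(R_0)$, and taking the supremum over $R_0\in\Delta$ gives the claimed bound $C_0\delta^2$.

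I do not expect a genuine obstacle here — the argument is an assembly of results already established in the excerpt — but the step that needs care is the oscillation bound on $g$. The crude density bounds available for a general $(1+\delta)$-Ahlfors regular measure only yield $g\in[(1+\delta)^{-1},2^d(1+\delta)]$, which is far too weak to produce any power of $\delta$. It is precisely the $d$-rectifiability of $\Gamma$, exploited through Lemma~\ref{lemma:adr_m_adr_set} (which uses that the density of $\HD^d|_\Gamma$ equals $1$ a.e.), that sharpens this to $g\in[(1+\delta)^{-1},1+\delta]$; this sharpening is exactly what makes the oscillation term — and hence the whole estimate — carry the factor $\delta^2$.
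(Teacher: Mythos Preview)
Your proposal is correct and follows essentially the same route as the paper's proof: invoke Lemma~\ref{lemma:adr_m_adr_set} to get the sharp bounds $(1+\delta)^{-1}\le g\le 1+\delta$, apply the pointwise inequality of Lemma~\ref{lemma:alpha_density}, and then control the two resulting sums via Lemma~\ref{lemma:osc} (using $\osct g\le(1+\delta)-(1+\delta)^{-1}\le C\delta$) and Theorem~\ref{thm:tolsa_lip}. Your closing remark about why rectifiability of $\Gamma$ is essential for the $\delta^2$ bound is also exactly the point.
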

\begin{proof}
Set $g \coloneqq d\mu/d\HD^d|_\Gamma$.
Lemma \ref{lemma:adr_m_adr_set} implies that $g, g^{-1} \le (1 + \delta)$.
We combine the cube-wise inequality from Lemma \ref{lemma:alpha_density} along with the Carleson packing conditions from Theorem \ref{thm:tolsa_lip} and Lemma \ref{lemma:osc} to see that for any $R_0 \in \Delta$ and $\osct g = \esssup g - \essinf g$, 
\begin{align*}
\mu(R_0)^{-1} \sum_{Q \in \Delta(R_0)} \alpha_\mu(Q)^2 \mu(Q) & \le C \mu(R_0)^{-1} \left( \sum_{Q \in \Delta(R_0)} \left( \osc_{\HD^d|_\Gamma, g}(Q)^2 + \alpha_{\HD^d|_\Gamma}(Q)^2 \right) \mu(Q) \right) \\
& \le C (\left( \osct{g}\right)^2 + \delta^2) \\
& \le  C \left( \left( (1 + \delta) - (1+\delta)^{-1} \right)^2 + \delta^2 \right) \le  C \delta^2,
\end{align*}
completing the proof of the Corollary.
\end{proof}

Finally, we transfer the $\alpha$-Carleson packing conditions from Lipschitz graphs to $\delta$-UR sets with the small-constant Corona decomposition, i.e., we prove \ref{cond:delta_cor} implies \ref{cond:alpha_c}. The upper Ahlfors regular in this implication essentially comes for free.

\begin{thm} \label{thm:corona_alpha}
There are constants $C_0 > 1$ and $\delta_0, \theta_0 \in (0,1)$ depending only on the dimensions $n$ and $d$ so that the following holds.
Whenever $0 < \delta < \delta_0$, $E \subset \R^n$ is $d$-Ahlfors regular and admits $\delta$-Corona decompositions, then for any measure $\mu$ of the form $d\mu(x) = g(x) d\HDE(x)$ where $g$ is Borel and satisfies $(1 + \delta)^{-1} \le g \le (1 + \delta)$, we have the Carleson condition 
\begin{align}
\sup_{R_0 \in \Delta} \mu(R_0)^{-1} \sum_{Q \in \Delta(R_0)} \alpha_\mu(Q)^2 \mu(Q) \le C_0 \delta^{\theta_0}.
\end{align}
Moreover, $E$ is upper $d$-Ahlfors regular with constant $1+C_0 \delta^{\theta_0}$.

Here $\Delta$ is a fixed system of dyadic cubes for $E$ with bounded constant $\cd \le C_{n,d}$ coming from the definition of $\delta$-Corona decompositions. 
\end{thm}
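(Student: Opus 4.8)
The plan is to prove the dyadic Carleson estimate by fixing $R_0 \in \Delta$, decomposing $\Delta(R_0)$ via the $\delta$-Corona decomposition $\family(R_0)$ furnished by conditions \eqref{cond:corona_coherent}--\eqref{cond:corona_topgr}, and transferring the small-constant estimate for Lipschitz graphs (Corollary \ref{cor:alpha_lip}) to each coherent region. The easy parts come first. By Lemma \ref{lem:cor_ur}, $E$ is $(C_D\delta)$-UR, so Lemma \ref{lemma:delta_ur_adr} gives that $\HDE$ is $d$-Ahlfors regular with constant $1 + C_0\delta^{1/d}$ (this is the asserted upper regularity) and that $E$ is rectifiable; hence $\mu = g\,\HDE$ is $d$-Ahlfors regular with constant $<2$ for $\delta$ small, and we may replace $\mu(Q)$ by $\HDE(Q)$ up to a factor $1+\delta$. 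Applying Lemma \ref{lemma:alpha_density} (upper bound) cube-wise and Lemma \ref{lemma:osc} to $(\HDE,g)$, together with $|g - (g)_{C_0 R_0}| \le (1+\delta) - (1+\delta)^{-1} \le 3\delta$, we reduce everything to proving $\sum_{Q \in \Delta(R_0)} \alpha_{\HDE}(Q)^2\,\HDE(Q) \le C_0\delta^{\theta_0}\,\HDE(R_0)$, since the $\osc_{\HDE,g}$ contribution is already $\le C\delta^2\,\HDE(R_0)$.

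\textbf{Decomposition over $\family(R_0)$.} I split $\sum_{Q\in\Delta(R_0)} = \sum_{S\in\family(R_0)}\sum_{Q\in S}$ and estimate each coherent $S$, with its $\delta$-Lipschitz graph $\Gamma = \Gamma(S)$, separately. Fixing $S$, let $\pi$ be a Borel nearest-point projection onto $\Gamma$ and set $\mu_S := \pi_{\#}\bigl(\HDE|_{E\cap\delta^{-1}B_{Q(S)}}\bigr)$, a measure carried by $\Gamma$. Since $\pi$ displaces every $x\in E\cap\delta^{-1}B_{Q(S)}$ by exactly $\dist(x,\Gamma)$, which is controlled through condition \eqref{cond:corona_graph}, and since every $f\in\Lambda(3B_Q)$ is globally $1$-Lipschitz, one gets $\mathcal{D}_{c_Q,3\diam Q}(\HDE,\mu_S) \le C\int_{E\cap C B_Q}\dist(x,\Gamma)\,d\HD^d(x)$ for $Q\in S$, and therefore, writing $\gamma(Q) := (\diam Q)^{-d-1}\int_{E\cap C B_Q}\dist(x,\Gamma)\,d\HD^d$, the triangle inequality for $\mathcal{D}$ yields $\alpha_{\HDE}(Q) \le C\,\alpha_{\mu_S}(Q) + C\,\gamma(Q)$ for all $Q\in S$.

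\textbf{Estimating the two terms, and summing.} The $\mu_S$-term is controlled by Corollary \ref{cor:alpha_lip}: one verifies that $\mu_S$ is, up to a harmless modification on an exceptional subset of $\Gamma$ of $\HD^d|_\Gamma$-measure $\le C\delta^{1/d}\HD^d(Q(S))$ (the set where $\pi|_E$ fails to be essentially injective with Jacobian near $1$, whose size is bounded by an area-formula argument from the small-constant Ahlfors regularity and Reifenberg flatness of $E$), a measure on $\Gamma$ with a bounded density relative to $\HD^d|_\Gamma$ whose $L^2$-oscillation is $\le C\delta^{c}$; then Corollary \ref{cor:alpha_lip} and Lemmas \ref{lemma:osc}--\ref{lemma:alpha_density} give a small-constant $\alpha$-Carleson packing for $\mu_S$, and transferring from cubes of $E$ to cubes of $\Gamma$ of comparable size and position (a bounded-overlap argument using that $c_Q$ is within $\delta\diam Q$ of $\Gamma$) gives $\sum_{Q\in S}\alpha_{\mu_S}(Q)^2\,\HDE(Q) \le C\delta^{c}\,\HD^d(Q(S))$. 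The $\gamma$-term is the crux: by Cauchy--Schwarz and $\HDE(Q)\simeq(\diam Q)^d$, one has $\gamma(Q)^2\HDE(Q) \le C(\diam Q)^{-2}\int_{E\cap C B_Q}\dist(x,\Gamma)^2\,d\HD^d$, so after interchanging sum and integral,
\[
\sum_{Q\in S}\gamma(Q)^2\,\HDE(Q) \;\le\; C\int_{E}\dist(x,\Gamma)^2\Bigl(\sum_{\substack{Q\in S\\ x\in C B_Q}}(\diam Q)^{-2}\Bigr)\,d\HD^d(x).
\]
The integrand vanishes on $E\cap\Gamma$, and on $E\setminus\Gamma$ the per-cube Hausdorff closeness in \eqref{cond:corona_graph} forces every $Q\in S$ with $x\in C B_Q$ to satisfy $\diam Q\ge c\,\dist(x,\Gamma)$ (since $c_Q\in E\cap\delta^{-1}B_Q$ obeys $\dist(c_Q,\Gamma)\le\delta\diam Q$ and $|x-c_Q|\le C\diam Q$), so Ahlfors regularity bounds the inner sum by $C\,\dist(x,\Gamma)^{-2}$; the two powers of $\dist(x,\Gamma)$ cancel and the right-hand side is $\le C\,\HD^d\bigl((E\setminus\Gamma)\cap\delta^{-1}B_{Q(S)}\bigr) \le C\delta\,\HD^d(Q(S))$ by \eqref{cond:corona_graph}. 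Summing both contributions over $S\in\family(R_0)$ and invoking the small-constant Carleson packing \eqref{cond:corona_carleson} with $R' = R_0$, i.e. $\sum_{S\in\family(R_0)}\HD^d(Q(S)) \le (1+\delta)\HD^d(R_0)$, gives $\sum_{Q\in\Delta(R_0)}\alpha_{\HDE}(Q)^2\HDE(Q) \le C\delta^{\theta_0}\HD^d(R_0)$, and the theorem follows after shrinking $\theta_0,\delta_0$.

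\textbf{Main obstacle.} The difficulty is entirely in making the discrepancy between $\HDE$ and the model on $\Gamma(S)$ a Carleson measure with \emph{small} constant. The individual coefficients $\alpha_{\HDE}(Q)$ are only $O(\delta^{1/d})$, and summing that bound over $\Delta(R_0)$ diverges; moreover the naive comparison against $\HD^d|_{\Gamma}$ produces an error $\HD^d\bigl((E\Delta\Gamma)\cap 3B_Q\bigr)/(\diam Q)^d$ whose square-sum over a coherent $S$ need not even be finite. The resolution is to compare against the pushforward $\mu_S$ instead, which replaces that symmetric-difference error by the relative $\beta_1$-coefficient $\gamma(Q)$ whose Cauchy--Schwarz bound is tuned precisely so that the scale-summation $\sum(\diam Q)^{-2}$ collapses against $\dist(\cdot,\Gamma)^2$ and reduces everything to the top-cube measure estimate; handling the behaviour of this device at scales far below $\delta\diam Q(S)$, and verifying that $\mu_S$ nonetheless carries a small-constant $\alpha$-packing so that Corollary \ref{cor:alpha_lip} and Lemmas \ref{lemma:osc}--\ref{lemma:alpha_density} apply, is the technical heart of the proof.
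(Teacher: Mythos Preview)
Your reductions (Lemmas \ref{lem:cor_ur}--\ref{lemma:delta_ur_adr} for the upper regularity, and Lemmas \ref{lemma:alpha_density}--\ref{lemma:osc} to pass from $\mu$ to $\HDE$) are correct and match the paper. Your treatment of the discrepancy term $\gamma(Q)$ is a nice idea and is morally right, though there is a subtlety you gloss over: points $x\in E\setminus\Gamma$ that lie far outside $CB_Q$ yet have $\pi(x)\in 3B_Q$ also contribute to $\mathcal D_{c_Q,3\diam Q}(\HDE,\mu_S)$, and for small $Q\in S$ these contributions are not captured by $\int_{E\cap CB_Q}\dist(\cdot,\Gamma)\,d\HD^d$ with a fixed $C$; one has to account for them separately via the global bound $\HD^d((E\setminus\Gamma)\cap\delta^{-1}B_{Q(S)})\le\delta\HD^d(Q(S))$.

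The genuine gap is in the $\mu_S$-term. The assertion that the nearest-point pushforward $\mu_S=\pi_\#\HDE$ has, after modification on a small set, a density relative to $\HD^d|_\Gamma$ whose $L^2$-oscillation is $\le C\delta^c$ is not established and is in fact problematic: on $\Gamma\cap E$ the density is $1$, but on balls $B(y,r)\subset\Gamma\setminus E$ (which may well exist at scales $r\ll\diam Q(S)$, since only the \emph{total} measure of $\Gamma\setminus E$ in $\delta^{-1}B_{Q(S)}$ is small) the measure $\mu_S$ can vanish, so $\mu_S$ need not be lower Ahlfors regular with any good constant, and any ``harmless modification'' on a small set reintroduces exactly the symmetric-difference error you correctly flag as fatal in your Main Obstacle paragraph. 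The paper resolves this not by pushing forward, but by running a stopping-time on a dyadic lattice for $\Gamma$ to build a comparison measure $\gamma$ with \emph{piecewise-constant} density $b_{Q^\Gamma}=\HDE(Q^\Gamma)/\HD^d|_\Gamma(Q^\Gamma)$ on the minimal cubes $Q^\Gamma\in m(S^\Gamma)$; this density is shown to lie in $[1-C\delta^{\theta_1},1+C\delta^{1/d}]$ by an area-formula argument, so $\gamma$ is Ahlfors regular with small constant and Corollary \ref{cor:alpha_lip} applies directly. The Wasserstein error $\mathcal D(\HDE,\gamma)$ then reduces (since $\gamma(Q^\Gamma)=\HDE(Q^\Gamma)$ on minimal cubes) to $\sum_{Q^\Gamma\in m(S^\Gamma)}(\diam Q^\Gamma)\HDE(Q^\Gamma)$, and the key packing estimate $\sum_{Q^\Gamma\in m(S^\Gamma)}\HDE(Q^\Gamma)\le C\delta\,\HDE(Q(S))$ is proved by linking $m(S^\Gamma)$ to minimal cubes of the neighboring $E$-families $S_j$ via condition \eqref{cond:corona_topgr} and invoking \eqref{cond:corona_carleson_cor}. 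The paper also shortcuts the non-top families $S\neq S_{R_0}$ via Tolsa's large-constant bound (Theorem \ref{thm:tolsa_ur_alpha}) together with \eqref{cond:corona_carleson_cor}, rather than repeating the delicate argument for each $S$.
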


\begin{proof}
Choose a system of dyadic cubes $\Delta$ for $E$ as in the definition of a $\delta$-Corona decomposition with constant $\cd \le C_{n,d}$. We begin the proof with a reduction.
Recall that by Lemma \ref{lem:cor_ur}, $E$ is $C \delta$-UR as long as $\delta_0$ is sufficiently small.
By Lemma \ref{lemma:delta_ur_adr} then, we may assume that $E$ is Ahlfors regular with constant $(1+ C_0 \delta^{1/d}) \le (1 + C_0 \delta_0^{1/d})$.
Let $\mubar = \HD^d|_E$.
Whenever $R_0 \in \Delta$, denote by $\family(R_0)$ the partition of $\Delta(R_0)$ given by the $\delta$-Corona decomposition of $E$ in Definition \ref{defn:corona}.
Moreover, denote by $S_{R_0} \in \family(R_0)$ the subcollection of $\Delta(R_0)$ containing $R_0$.
We show that the conclusion of the Theorem follows from the estimate
\begin{align}
\sup_{R_0 \in \Delta} \mubar(R_0)^{-1} \sum_{Q \in S_{R_0}} \alpha_{\mubar}(Q)^2 \mubar(Q) \le C_0 \delta^{\theta_0}. \label{eqn:cor_alpha_1}
\end{align}

Indeed, assume that (\ref{eqn:cor_alpha_1}) holds, and let $R_0 \in \Delta$ be given.
Choose $\family(R_0)$ as in the Definition of a $\delta$-Corona decomposition.
Then
\begin{align*}
\mubar(R_0)^{-1} \sum_{Q \in \Delta(R_0)} \alpha_\mubar(Q)^2 \mubar(Q) & = \mubar(R_0)^{-1} \sum_{Q \in S_{R_0}} \alpha_\mubar(Q)^2 \mubar(Q) \\
& \qquad + \mubar(R_0)^{-1} \sum_{S \in \family(R_0) \setminus \{S_{R_0} \}} \sum_{Q \in S} \alpha_\mubar(Q)^2 \mubar(Q) \\
& \stackrel{(\ref{eqn:tolsa_ur_alpha})}{\le} C_0 \delta^{\theta_0} + C \mubar(R_0)^{-1} \sum_{S \in \family(R_0) \setminus \{ S_{R_0} \} } \mubar(Q(S)) \\
& \stackrel{(\ref{cond:corona_carleson_cor})}{\le} C_0 \delta^{\theta_0} + C \delta,
\end{align*}
as long as $\delta_0 < 1$, where $C >0$ is some dimensional constant depending only on $n, d$ here and in the future.
Next, one argues just as in the proof of Corollary \ref{cor:alpha_lip} to replace $\mubar$ with $\mu$ to obtain
\begin{align*}
\mu(R_0)^{-1} \sum_{Q \in \Delta(R_0)} \alpha_\mu(Q)^2 \mu(Q) & \le C \left( C_0 \delta^{\theta_0}  +\left( \osct \dfrac{d\mu}{d\mubar} \right)^2 \right) \\
& \le C C_0 \delta^{\theta_0} + C \delta^2.
\end{align*}
This shows that it suffices to demonstrate (\ref{eqn:cor_alpha_1}), and thus from here on we fix $R_0 \in \Delta$, $\family = \family(R_0)$, and we may as well assume in our estimates that $\mu = \mubar = \HD^d|_E$.
Fix the top family $S \equiv S_{R_0} \in \family$ to be the subcollection for which $R_0 \in S_{R_0}$, and choose an approximating $\delta$-Lipschitz graph $\Gamma = \Gamma(S)$ as in (\ref{cond:corona_graph}) that also satisfies (\ref{cond:corona_topgr}).
Notice also that $R_0 = Q(S)$.
We break the remainder of the proof into three steps.

\medskip
\noindent \textbf{Step one}: We define a measure on $\Gamma \cap 3 B_{Q(S)}$ to compare to $\mu$.
Recall that $c_{Q(S)}$ is the center of $Q(S)$, and that $B_{Q(S)} = B(c_{Q(S)}, \diam Q(S)) \supset Q(S)$.
The condition (\ref{cond:corona_graph}) on the proximity of $\Gamma$ to $E$ near $\delta^{-1} B_{Q(S)}$ implies that there is $c_{\Gamma, S} \in \Gamma \cap B_{Q(S)}$ so that $\abs{c_{Q(S)} - c_{\Gamma, S}} \le \delta \, \diam Q(S)$. 

Now we produce a system of dyadic cubes for $\Gamma$ that come from true dyadic cubes in $\R^n$ as follows.
Note that up to rotation, we may assume that $\Gamma$ is the graph of a Lipschitz function over $\R^d \subset \R^n$.
Let $Q^\Gamma(S)$ be a (true) closed cube in $\R^n$ with axis-parallel sides centered at $c_{\Gamma, S}$ such that $10 B_{Q(S)} \subset Q^\Gamma(S) \subset 20 \sqrt{n} B_{Q(S)}$, so that $\diam Q^\Gamma(S) \simeq_{n,d} \diam Q(S)$.
Denote by $\tilde{Q}^\Gamma(S)$ the projection of $Q^\Gamma(S)$ onto $\R^d$, and notice that $\tilde{Q}^\Gamma(S)$ is a true cube in $\R^d$, since $Q^\Gamma(S)$ has axis-parallel sides.
Split $\tilde{Q}^\Gamma(S)$ into $2^d$ closed subcubes $\tilde{Q}^\Gamma_{1,1} \dotsc, \tilde{Q}^\Gamma_{1, 2^d}$ of $\tilde{Q}^\Gamma(S) \subset \R^d$ with disjoint interiors.
Denote this collection of cubes in $\R^d$ by $\Delta_1^\Gamma(\tilde{Q}^\Gamma(S))$, which we call first generation (true) dyadic cubes in $\R^d$ contained in $\tilde{Q}^\Gamma(S)$.
Then one generates the family $\Delta^\Gamma_j(\tilde{Q}^\Gamma(S))$ from $\Delta^\Gamma_{j-1}(\tilde{Q}^\Gamma(S))$ by splitting each cube in the previous generation into $2^d$ more (true) closed cubes in $\R^d$.
With $\Delta_0^\Gamma(\tilde{Q}^\Gamma(S)) = \{ \tilde{Q}^\Gamma(S)\}$, denote by $$ \Delta^\Gamma(\tilde{Q}^\Gamma(S)) = \bigcup_{j \ge 0} \Delta_j^\Gamma(\tilde{Q}^\Gamma(S)).$$

We then lift the dyadic cubes in $\Delta^\Gamma(\tilde{Q}^\Gamma(S))$ to closed cubes in $\R^n$ centered on $\Gamma$.
That is, if $\tilde{Q}^\Gamma \in \Delta^\Gamma_j(\tilde{Q}^\Gamma(S))$ with center $c_{\tilde{Q}^\Gamma}$, then since $\Gamma$ is a $\delta$-Lipschitz graph defined over $\R^d$, there is a unique $c_{Q^\Gamma} \in \Gamma$ so that $\pi_{\R^d}(c_{Q^\Gamma}) = c_{\tilde{Q}^\Gamma}$.
Let $Q^\Gamma$ be a closed cube in $\R^n$ with axis-parallel sides, center equal to $c_{Q^\Gamma}$, and side-length equal to that of $\tilde{Q}^\Gamma$.
Denote the collection of all such cubes generated this way by $\Delta_j^\Gamma(Q^\Gamma(S))$, and set \[\Delta^\Gamma \equiv \Delta^\Gamma(Q^\Gamma(S)) = \bigcup_{j \ge0} \Delta_j^\Gamma(Q^\Gamma(S)).\] Notice by construction we have the following facts about the dyadic cubes in $\Delta^\Gamma(Q^\Gamma(S))$:
\begingroup
\addtolength{\jot}{0.5em}
\begin{align}
& \text{ \parbox[t]{0.8 \textwidth}{ for each $j \ge 0$, the cubes in $\Delta_j^\Gamma(Q^\Gamma(S))$ have disjoint interiors. Moreover, $\Gamma \cap Q^\Gamma(S) = \cup_{Q^\Gamma \in \Delta_j^\Gamma(Q^\Gamma(S))} ( \Gamma \cap \text{Int}(Q^\Gamma) )  \cup F_j$, where $F_j \subset \Gamma$ is an $\HD^d$-null set. }} \label{cond:sgam_1} \\
& \text{ \parbox[t]{0.8 \textwidth}{ if $Q^\Gamma \in \Delta_j^\Gamma(Q^\Gamma(S))$ where $j \in \N$,  then there is a unique $R^\Gamma  \in \Delta_{j-1}^\Gamma(Q^\Gamma(S))$ so that $Q^\Gamma \subset R^\Gamma $, } } \label{cond:sgam_2} \\
& \text{ \parbox[t]{0.8 \textwidth}{ for each $Q^\Gamma \in \Delta^\Gamma$, one has that $$ (1 + C \delta)^{-1} \HD^d(\pi_{\R^d}(Q^\Gamma)) \le  \HD^d(\Gamma \cap Q^\Gamma) \le (1 + C \delta) \HD^d(\pi_{\R^d}(Q^\Gamma)) .$$  } } \label{cond:sgam_3}
\end{align}
\endgroup
Indeed, conditions \eqref{cond:sgam_1}, (\ref{cond:sgam_2}), and (\ref{cond:sgam_3}) follow from the fact that $\Gamma$ is a $\delta$-Lipschitz graph as long as $\delta_0$ is chosen small enough.

Fix parameters $M, \lambda >1$ to be determined.
In the end, the choice of these parameters shall depend only on the underlying dimensions $n$ and $d$.
Since $E$ is $d$-Ahlfors, the set 
\begin{align*}
\left \{ R \in \Delta_{\gen R_0} \; : \; R \cap 20 \sqrt{n} B_{Q(S)} \ne \emptyset \right \}
\end{align*}
has boundedly many elements, $R_1, R_2, \dotsc, R_k$ with $k \le C$, again for some dimensional constant $C$ depending only on $n$ and $d$.
As per (\ref{cond:corona_topgr}), we know that for each $1 \le j \le k$, there is a partition $\family_i = \family(R_i)$ of $\Delta(R_i)$ that satisfy the conditions of the $\delta$-Corona decomposition, (\ref{cond:corona_coherent})-(\ref{cond:corona_carleson}).
Moreover, such a partition can be chosen so that when $S_i \in \family_i$ is the subcollection with $R_i \in S_i$, we know that the conditions (\ref{cond:corona_coherent})-(\ref{cond:corona_carleson}) are satisfied with the same Lipschitz graph $\Gamma$ as the one chosen for $S$. 

We perform a stopping time argument on the cubes in $\Delta^\Gamma$ to find some coherent collection $S^\Gamma \subset \Delta^\Gamma$ of (true) cubes in $\R^n$ for which $\Gamma$ and $E$ are sufficiently close.
Let us say that a cube $Q^\Gamma \in \Delta^\Gamma$ is `far from $S$' (written FS) if the cube $\lambda Q^\Gamma$ does not meet any $Q \in S \cup S_1 \cup \cdots \cup S_k \eqqcolon S^*$ satisfying 
\begin{align}
M^{-1} \diam Q^\Gamma \le \diam Q \le M \diam Q^\Gamma. \label{eqn:cor_alpha_st1}
\end{align}
Now we proceed generation by generation.
If $M, \lambda$ are chosen large enough and if $\delta_0$ is sufficiently small (depending on only on the underlying dimensions), then one readily checks that the first few generations of cubes in $\Delta^\Gamma$ are not FS.
Therefore, let the first generation of $\Delta^\Gamma$ be in the set $S^\Gamma$. We continue generation by generation in the dyadic system $\Delta^\Gamma$ adding cubes to $S^\Gamma$, until we reach a cube $Q^\Gamma$ which has a sibling (possibly itself) which is FS.
At this stage, $Q^\Gamma$ and all of its siblings become minimal cubes of $S^\Gamma$, denoted $m(S^\Gamma)$, and no other subcubes from the parent of $Q^\Gamma$ are added to $S^\Gamma$. 
Notice that this gives that the collection $S^\Gamma$ is coherent, and its minimal cubes are disjoint and contained in $Q^\Gamma(S)$.
In addition, if $Q^\Gamma \in m(S^\Gamma)$, then the fact that its parent, $R^\Gamma$ is not FS implies that $\lambda R^\Gamma$ meets some element $Q \in S^*$ with 
\begin{align*}
M^{-1} \diam R^\Gamma \le \diam Q \le M \diam R^\Gamma,
\end{align*}
As long as $\delta_0$ is taken sufficiently small (depending on $M$ and $\lambda$), then the fact that $\diam R^\Gamma$ and $\diam Q$ are comparable and the fact that $\lambda R^\Gamma$ meets $Q$ implies that 
\begin{align*}
\delta^{-1} B_Q \supset \delta_0^{-1} B_Q \supset R^\Gamma.
\end{align*}
Then condition (\ref{cond:corona_graph}) implies that 
\begin{align}
\sup_{x \in (E \cup \Gamma) \cap R^\Gamma} \dist(x,E) + \dist(x, \Gamma) & \le \delta \diam Q   \le \delta M \diam R^\Gamma. \label{eqn:cor_alpha_st2}
\end{align}
Since $Q^\Gamma \subset R^\Gamma$, this implies
\begin{align}
\sup_{x \in (E \cup \Gamma) \cap Q^\Gamma} \dist(x,E) + \dist(x, \Gamma) \le  2 \delta M \diam Q^\Gamma. \label{eqn:cor_alpha_st2p}
\end{align}

Now, we claim that for any cube $Q^\Gamma$ satisfying (\ref{eqn:cor_alpha_st2p}) (and thus, for all cubes $Q^\Gamma \in S^\Gamma$), we have that 
\begin{align}
1 - C_1\delta^{\theta_1} \le \dfrac{\HD^d|_E(Q^\Gamma)}{\HD^d|_\Gamma(Q^\Gamma)} \le 1 + C_1 \delta^{1/d} \label{eqn:cor_alpha_3}
\end{align}
for constants $C_1 >1$ and $\theta_1 \in (0,1/d)$ depending only the underlying dimensions $n$, $d$, and the constant $M$ as long as $\delta_0$ is small.
The proof of this fact is a bit tedious, but the main ideas are that $\Gamma$ is the graph of a $\delta$-Lipschitz graph defined over $\R^d$, and there is another $\delta$-Lipschitz graph $\Gamma'$ that gives a good (measure) approximation to $E$ inside $Q^\Gamma$ in the sense that 
\begin{align}\label{eqn:cor_alpha_3p}
\HD^d( Q^\Gamma \cap (E \Delta \Gamma')) \le C \delta (\diam Q^\Gamma)^d,
\end{align}
by virtue of (\ref{cond:corona_graph}).
Then the fact that $Q^\Gamma$ is centered on $\Gamma$, and estimate (\ref{eqn:cor_alpha_st2p}) implies that $\Gamma'$ also passes near the center of $Q^\Gamma$ in that  $\dist(c_{Q^\Gamma}, \Gamma') \le C( \delta^{1/d} +  \delta M ) \diam Q^\Gamma \le C \delta^{1/d} \diam Q^\Gamma$. Moreover, $\Gamma'$ can be written as a $C \delta^{1/d}$-Lipschitz graph over $\R^d$, so that since $Q^\Gamma$ is a true cube with sides parallel to the coordinate axes, we can estimate with the area formula that
\begin{align*}
(\diam  Q^\Gamma)^d \le \HD^d|_\Gamma(Q^\Gamma), \;  \HD^d|_{\Gamma'}(Q^\Gamma) \le (1 + C \delta^{1/d}) (\diam Q^\Gamma)^d.
\end{align*}
Along with \eqref{eqn:cor_alpha_3p}, the estimate above on $\HD^d|_{\Gamma'}(Q^\Gamma)$ then implies 
\begin{align*}
(1 - C \delta) (\diam Q^\Gamma)^d \le \HD^d|_E(Q^\Gamma) \le (1 + C \delta^{1/d}) (\diam Q^\Gamma)^d,
\end{align*}
so that the estimates on $\HD^d|_\Gamma(Q^\Gamma)$ in \eqref{eqn:cor_alpha_3p} then give \eqref{eqn:cor_alpha_3} with any $\theta_1 \in (0, 1/d)$ and $C_1$ depending on $\theta_1$.

Finally, we define our density for our measure on $\Gamma$.
Define the coefficients $b_{Q^\Gamma} \coloneqq \HD^d|_E(Q^\Gamma)/\HD^d|_\Gamma(Q^\Gamma)$, set
\begin{align*}
g(x) & \coloneqq \begin{cases}
b_{Q^\Gamma}, & \text{when } x \in \Gamma \cap Q^\Gamma, \text{ and } Q^\Gamma \in m(S^\Gamma) \\
1 & \text{ otherwise, }
\end{cases}
\end{align*}
and define $d\gamma(x) \coloneqq g(x) d\HD^d|_\Gamma(x)$.
Notice that by (\ref{eqn:cor_alpha_3}), we have that $1 - C_1(M \delta)^{\theta_1} \le g(x) \le 1 + C_1(M \delta)^{\theta_1}$ on $\Gamma$, so that since $\Gamma$ is a $\delta$-Lipschitz graph, we have that 
\begin{align}
& \text{ \parbox[t]{0.8 \textwidth}{ $\gamma$ is a $d$-Ahlfors regular measure with support $\Gamma$ and constant $1 + C_1 (M \delta)^{\theta_1}$. } } \label{eqn:cor_alpha_4}
\end{align}
for some (larger) $C_1 >1$, depending only on $n$ and $d$.
In addition, the density $g(x)$ is chosen in such a way that for each $Q^\Gamma \in m(S^\Gamma)$,
\begin{align} \label{eqn:dens_just}
\mu(Q^\Gamma) = \HD^d|_E(Q^\Gamma) = \gamma(Q^\Gamma),
\end{align}
where we recall that we are assuming without loss of generality that $\mu = \HD^d|_E$.
With our definition of our approximating measure in hand, we move to the main estimate.

\medskip
\noindent \textbf{Step two}: We estimate the $\alpha_\mu$ by the $\alpha_\gamma$.
First, notice that if $x \in (E \Delta \Gamma) \cap Q^\Gamma(S)$, then $x$ belongs to some minimal cube of $S^\Gamma$ (since otherwise, being contained in arbitrarily small cubes of $S^\Gamma$, and (\ref{eqn:cor_alpha_st2p}) gives that $x \in E \cap \Gamma$).
From this, one deduces that for any cube $Q^\Gamma \in S^\Gamma$, 
\begin{align}
\left( (E \Delta \Gamma) \cap Q^\Gamma  \right) \subset \bigcup_{\substack{R^\Gamma \in m (S^\Gamma) \\ R^\Gamma \subset Q^\Gamma}} R^\Gamma. \label{eqn:cor_alpha_st2pp}
\end{align}

Now, let us begin our estimate.
Fix any cube $Q \in S$ for $E$.
Notice $ 3 B_Q \subset 10 B_{Q(S)} \subset Q^\Gamma(S)$, by choice of $Q^\Gamma(S)$.
Now since $\family$ is coherent, any $Q^\Gamma \in \Delta^\Gamma$ meeting $3 B_Q$ that also satisfies $M \diam Q^\Gamma \ge \diam Q$ must be so that $Q^\Gamma \in S^\Gamma$.
Indeed let us argue this by contradiction and suppose that this is not true.
Then there is some minimal cube $R^\Gamma \in S^\Gamma$ containing $Q^\Gamma$, and some sibling of $R^\Gamma$, $(R^\Gamma)' \in \Delta^\Gamma$ which is FS.
That is, $\lambda (R^\Gamma)'$ does not meet any $R \in S^*$ satisfying 
\begin{align*}
M^{-1} \diam (R^\Gamma)' \le \diam R \le M \diam (R^\Gamma)'.
\end{align*}
However, notice that as long as $\lambda $ is chosen large enough (depending on $M$), then the fact that $(R^\Gamma)'$ and $R^\Gamma$ are siblings, $R^\Gamma$ meets $3B_Q$, and $\diam Q \le M \diam Q^\Gamma \le M \diam (R^\Gamma)'$ implies that $\lambda (R^\Gamma)'$ meets $Q$.
Since $(R^\Gamma)'$ is FS, then $Q \in S$ and $\diam Q \le M \diam (R^\Gamma)'$ imply $\diam (R^\Gamma)' > M \diam Q$.
However, since $\family$ is coherent, and $\lambda (R^\Gamma)'$ meets $Q$, one may find some parent $Q' \supset Q$ with $Q' \in S$ so that $$M^{-1} \diam (R^\Gamma)' \le \diam Q' \le M \diam (R^\Gamma)',$$ which contradicts the fact that $(R^\Gamma)'$ is FS.
Hence, we have proved that for any cube $Q^\Gamma \in \Delta^\Gamma$,
\begin{align}
Q \in S, 3B_Q \cap Q^\Gamma \ne \emptyset, \text{ and } M \diam Q^\Gamma \ge \diam Q \Rightarrow Q^\Gamma \in S^\Gamma \label{eqn:cor_alpha_st3}.
\end{align} 
From here, we see that (\ref{eqn:cor_alpha_st3}) and the fact that $\Delta^\Gamma$ partitions the space $Q^\Gamma(S)$, that whenever $Q \in S$,
\begin{align}
3 B_Q \text{ meets some } Q^\Gamma \in S^\Gamma \text{ satisfying } \diam Q \le \diam Q^\Gamma \le 2 \diam Q. \label{eqn:cor_alpha_st4}
\end{align}
For every $Q \in S$, denote by any such choice of a cube $Q^\Gamma$ as in (\ref{eqn:cor_alpha_st4}) by $T(Q) \in S^\Gamma$.
Remark that since $\diam T(Q) \ge \diam Q$ for $Q \in S$, we know that $10 T(Q) \supset 3 B_Q$.

For each cube $Q \in S$, choose a flat measure $\nu_{Q}$ minimizing the quantity 
\begin{align*}
\tilde{\alpha }_\gamma (T(Q)) & \coloneqq (\diam T(Q))^{-d-1} \inf_{\nu \in \mathrm{Flat}(n,d) }   \mathcal{D}_{c_{T(Q)}, 10 \diam T(Q)}(\gamma, \nu),
\end{align*}
where $\mathrm{Flat}(n,d)$ is the space of flat measures as in Definition \ref{defn:alpha} and $\mathcal{D}_{x,r}$ is as in Definition \ref{defn:wasserstein}.
Then whenever $f \in \Lambda( 3B_Q)$,
\begin{align}
(\diam Q)^{-d-1} \abs{ \int f (d\mu - d\nu_{Q}) } & \le (\diam Q)^{-d-1} \left( \abs{ \int f (d \mu - d \gamma) } + \abs{ \int f (d \gamma - d \nu_Q) }   \right ) \nonumber \\ 
& \le (\diam Q)^{-d-1} \abs{ \int f (d \mu - d\gamma) } + C \tilde{\alpha }_\gamma (T(Q)). \label{eqn:cor_alpha_st5}
\end{align}
To estimate the first term above, we notice that by (\ref{eqn:cor_alpha_st2pp}), we have 
\begin{align*}
\abs{ \int f (d \mu - d \gamma) } & \le \sum_{ \substack{Q^\Gamma \in m(S^\Gamma) \\ Q^\Gamma \cap 3B_Q \ne \emptyset}  } \abs{ \int_{Q^\Gamma} f ( d\mu - d\gamma)  } \\
& \le \sum_{ \substack{Q^\Gamma \in m(S^\Gamma) \\ Q^\Gamma \cap 3B_Q \ne \emptyset}  } \abs{ \int_{Q^\Gamma} f - f(c_{Q^\Gamma}) d \mu  } + \abs{ \int_{Q^\Gamma} f - f(c_{Q^\Gamma}) d \gamma  } \\ 
& \le C \sum_{ \substack{Q^\Gamma \in m(S^\Gamma) \\ Q^\Gamma \cap 3B_Q \ne \emptyset}  } (\diam Q^\Gamma) \mu(Q^\Gamma), 
\end{align*}
since $\mu(Q^\Gamma) = \gamma(Q^\Gamma)$ for $Q^\Gamma \in m(S^\Gamma)$ (recall \eqref{eqn:dens_just}), and since $f$ is $1$-Lipschitz.
Combining the above with (\ref{eqn:cor_alpha_st5}) then gives
\begin{align*}
\sum_{Q \in S} & \alpha_\mu(Q)^2 \mu(Q) \\
&  \le C \sum_{ Q \in S }  \tilde{\alpha}_\gamma (T(Q))^2 \mu(Q)   + C \sum_{Q \in S} \mu(Q) (\diam Q)^{-2d-2} \Big( \sum_{ \substack{Q^\Gamma \in m(S^\Gamma), \\ Q^\Gamma \cap 3B_Q \ne \emptyset}  } (\diam Q^\Gamma) \mu(Q^\Gamma)  \Big)^2 \\
& \eqqcolon (\text{I}) + (\text{II}).
\end{align*}
To estimate (I), we remark that the diameter estimate on $T(Q)$, (\ref{eqn:cor_alpha_st4}), gives us that for some dimensional constant $C >0$, and uniformly over all $R^\Gamma \in S^\Gamma$, 
\begin{align}
\# \left \{ Q \in S \; : \; T(Q) = R^\Gamma \right \} \le C. \label{eqn:cor_alpha_st6}
\end{align}
Hence we readily see,
\begin{align*}
\sum_{Q \in S} \tilde{\alpha}_\gamma(T(Q))^2 \mu(Q)  & \le C \sum_{R^\Gamma \in \Delta^\Gamma(Q^\Gamma(S)) }  \tilde{\alpha}_\gamma(R^\Gamma)^2 \gamma(R^\Gamma)  \\
& \le C (M \delta)^{\theta_1} \gamma(Q^\Gamma(S))  \le C (M \delta)^{\theta_1} \mu(Q(S)).
\end{align*}
To be clear, the second to last inequality above follows from the estimate on the $d$-Ahlfors regularity constant of $\gamma$, (\ref{eqn:cor_alpha_4}), and the fact that $\Gamma$ is a $\delta$-Lipschitz graph.
Along with the fact that the cubes $Q^\Gamma \cap \Gamma$ for $Q^\Gamma \in \Delta^\Gamma$ serve as a system of dyadic cubes for $\Gamma$, the estimate then follows from Corollary (\ref{cor:alpha_lip}).
This gives our desired estimate on (I). 

We now move onto (II).
Define the collection of cubes $\mathcal{I}(Q) \subset m(S^\Gamma)$ for $Q \in S$ by $$ \mathcal{I}(Q) \coloneqq  \{ Q^\Gamma \in m(S^\Gamma) \; : \; Q^\Gamma \cap 3 B_Q \ne \emptyset, Q^\Gamma \subset R^\Gamma \in S^\Gamma \text{ for some } R^\Gamma \text{ satisfying (\ref{eqn:cor_alpha_st4})}  \}.$$
Notice that the same argument that precedes (\ref{eqn:cor_alpha_st3}) can be used to demonstrate that if $Q^\Gamma \in m(S^\Gamma)$ and $Q^\Gamma \cap 3B_Q \ne \emptyset$ for $Q \in S$, then necessarily, $\diam Q^\Gamma \le 2 \diam Q$.
Indeed, otherwise, the fact that $Q^\Gamma$ is minimal and choosing $\lambda$ large enough (depending on $M$) guarantees the stronger inequality $\diam Q^\Gamma > M \diam Q$.
But then the fact that $S$ is coherent can be used to show that none of the siblings of $Q^\Gamma$ are far from $S$, so $Q^\Gamma$ is not minimal.
Thus, $$Q^\Gamma \in m(S^\Gamma) \text{ and } Q^\Gamma \cap 3B_Q \ne \emptyset \Rightarrow \diam Q^\Gamma \le 2 \diam Q,$$ from which we can easily see we have equality of sets 
\begin{align}
\mathcal{I}(Q) = \{ Q^\Gamma \in m(S^\Gamma)\; : \; Q^\Gamma \cap 3 B_Q \ne \emptyset \}. \label{eqn:cor_alpha_8}
\end{align}
We now move on to the main estimate for (II).

We have the following string of inequalities:
\begin{align*}
(\text{II}) & = \sum_{Q \in S} \mu(Q) (\diam Q)^{-2d} \left( \sum_{ \substack{Q^\Gamma \in m(S^\Gamma) \\ Q^\Gamma \cap 3B_Q \ne \emptyset}  }  \left( \dfrac{\diam Q^\Gamma}{\diam Q} \right) \mu(Q^\Gamma)  \right)^2 \\
& \le C \sum_{Q \in S} (\diam Q)^{-d} \left( \sum_{ Q^\Gamma \in \mathcal{I}(Q) }  \left( \dfrac{\diam Q^\Gamma}{\diam Q} \right) \mu(Q^\Gamma)  \right)^2 \\
& \le C \sum_{Q \in S} \left( \sum_{Q^\Gamma \in \mathcal{I}(Q)} \mu(Q^\Gamma)  \left( \dfrac{\diam Q^\Gamma }{ \diam T(Q)} \right)^2  \right)  (\diam Q)^{-d} \left( \sum_{ Q^\Gamma \in \mathcal{I}(Q) } \mu(Q^\Gamma) \right) \\
& \le C \sum_{Q \in S} \left( \sum_{Q^\Gamma \in \mathcal{I}(Q)} \mu(Q^\Gamma)  \left( \dfrac{\diam Q^\Gamma }{ \diam T(Q)} \right)^2  \right) \\
& \le C \sum_{R^\Gamma \in S^\Gamma } \sum_{ \substack{ Q^\Gamma \in m(S^\Gamma) \, : \, Q^\Gamma \subset R^\Gamma \\ Q^\Gamma \text{ meets some $3B_Q$ where } Q \in S  } }  \mu(Q^\Gamma)  \left( \dfrac{\diam Q^\Gamma }{ \diam R^\Gamma} \right)^2    \\
& \le C \sum_{ \substack{ Q^\Gamma \in m(S^\Gamma) } }\mu(Q^\Gamma) \sum_{ \substack{R^\Gamma \in S^\Gamma \\ R^\Gamma \supset Q^\Gamma } } \left( \dfrac{\diam Q^\Gamma }{\diam R^\Gamma}  \right)^2    \le C \sum_{ \substack{ Q^\Gamma \in m(S^\Gamma)}  } \mu(Q^\Gamma).
\end{align*}
In the above, the first inequality holds simply because we sum the same cubes, by (\ref{eqn:cor_alpha_8}), and since $\mu(Q) \simeq_{n,d} (\diam Q)^d$.
The second holds by Cauchy-Schwarz and since $\diam Q \simeq_{n,d} \diam T(Q)$, and the third because the cubes in $\mathcal{I}(Q)$ are disjoint minimal cubes of $S^\Gamma$ contained in $10 B_Q$.
The fourth follows from (\ref{eqn:cor_alpha_st4}) and (\ref{eqn:cor_alpha_st6}), so that to each $Q$ there corresponds an $R^\Gamma \in S^\Gamma$ for which the term appears, and each such $R^\Gamma$ corresponds to only finitely many such $Q \in S$.
The fifth is just a switching of the order of summation, and the sixth is because the remaining inner series is a geometric series. 

In view of our estimate on (I), our last step of the proof is to verify that for some $\theta >0$ (depending only on $n$ and $d$),
\begin{align}
\sum_{ Q^\Gamma \in m(S^\Gamma) } \mu(Q^\Gamma) & \le C \delta^\theta \mu(Q(S)). \label{eqn:cor_alpha_st7}
\end{align}
In fact, we shall see that we may take $\theta = 1$. 

 \medskip
\noindent \textbf{Step three}: we prove (\ref{eqn:cor_alpha_st7}).
Let $Q^\Gamma \in m(S^\Gamma)$.
By definition, there is some $R^\Gamma \in S^\Gamma$, a sibling of $Q^\Gamma$ that is FS.
Recall that $R^\Gamma \subset Q^\Gamma(S) \subset 20 \sqrt{n} B_{Q(S)}$, and thus (\ref{eqn:cor_alpha_3}) implies that $R^\Gamma$ must meet one of the $R_0, R_1, \dotsc, R_k$, say $R_j$, as long as $\delta_0$ is small. Indeed \eqref{eqn:cor_alpha_3} says that such an $R^\Gamma$ must meet some portion of $E$ for $\delta_0$ small, and since the $R_i$ cover all of $E$ in a neighborhood of $R_0$, then $R^\Gamma$ must meet one of them. For convenience write $S_0 = S = S(R_0)$.
Recall that
\begin{align*}
\diam R^\Gamma & \le \diam Q^\Gamma(S) \le C \diam Q(S) \le C_2 \diam R_j, 
\end{align*}
for some dimensional constant $C_2 > 1$.
By taking $M \ge \max \{ C_2, C_D\}$, we see that it must be the case that $\diam R_j > M \diam R^\Gamma$, since otherwise $R^\Gamma$ is not FS.
Now we repeat this argument on the children of $R_j$.
Because $R^\Gamma$ meets $R_j$, we see that there is some $R_{j}^1 \subset R_j$ a child of $R_j$ that meets $R^\Gamma$.
Since $\diam R_j^1  \ge \cd^{-2} \diam R_j$, we know that $\diam R_j^1 \ge M C_D^{-2} \diam R^\Gamma \ge M^{-1} \diam R^\Gamma$ by the choice of $M$ above. Since $R^\Gamma$ is FS, either $R_j^1 \not \in S_j$, or, $R_j^1 \in S_j$ and in fact the stronger inequality $\diam R_j^1 > M \diam R^\Gamma$ holds.
We continue this argument finitely many times until we reach a child $R_j^\ell \subset R_j, R_j^\ell \in m(S_j)$, which meets $R^\Gamma$ and satisfies $\diam R_j^\ell > M \diam R^\Gamma$.
In particular, since $Q^\Gamma$ is a sibling of $R^\Gamma$, and since $\diam R^\Gamma < M^{-1} \diam R_j^\ell$, we readily see that $Q^\Gamma \subset 3 B_{R_j^\ell}$.
This shows that 
\begin{align}
\bigcup_{Q^\Gamma \in m(S^\Gamma)} Q^\Gamma \subset \bigcup_{j=0}^k \bigcup_{ Q \in m(S_j)} 3B_Q. \label{eqn:cor_alpha_st8}
\end{align}

Using (\ref{eqn:cor_alpha_st8}) and the fact that the $Q^\Gamma \in m(S^\Gamma)$ are disjoint, we then estimate
\begin{align*}
\sum_{Q^\Gamma \in m(S^\Gamma)} \mu(Q^\Gamma)  & = \mu \left( \bigcup_{Q^\Gamma \in m(S^\Gamma)} Q^\Gamma  \right)  \le  \mu \left( \bigcup_{j=0}^k \bigcup_{Q \in m(S_j)} 3 B_Q \right) \\
& \le \sum_{j=0}^k \sum_{Q \in m(S_j)} \mu(3 B_Q)   \le C \sum_{j=0}^k \sum_{Q \in m(S_j)} \mu(Q) \\
&  \le C \sum_{j=0}^k \sum_{ S \in \family(R_j), S \ne S_j  } \mu(Q(S))   \stackrel{(\ref{cond:corona_carleson_cor})}{\le} C \sum_{j=0}^k \delta \mu(R_j)  \le C (k+1) \delta \mu(R_0).
\end{align*}
Recalling that $k \le C$ for some dimensional constant $C$ depending only on $n$ and $d$ this shows (\ref{eqn:cor_alpha_st7}), and thus the proof of the Theorem is complete.
\end{proof}

\section{Reifenberg flatness implies \texorpdfstring{$\gamma(E)$}{oscillation of tangents} is small} \label{sec:reif_gamma}
In this section show that \ref{cond:reif} gives \ref{cond:bmo}, i.e., we prove Theorem \ref{thm:reif_bmo}.
First let us introduce in what sense $E$ will have tangent planes, which requires some notation. 
If $\mu_n, \mu$ are Radon measures on $\R^n$, then we write $\mu_n \wra \mu$ (and say, $\mu_n$ converges weakly to $\mu$) to mean that for each $\phi \in C_c(\R^n)$, $\int \phi \; d\mu_n \ra \int \phi \; d \mu$ as $n \ra \infty$.
Whenever $\mu$ is a Radon measure and $f: \R^n \ra \R^n$ is proper continuous map, then the Radon measure $\pushm{f}{\mu}$ is defined by $\pushm{f}{\mu}(A) \equiv \mu(f^{-1}(A))$.
Finally, denote for $x \in \R^n$ and $r >0$, $\Phi_{x,r}(y) = (y-x)/r$. 

\begin{thm}[see Theorem 10.2, \cite{MAGGI}] \label{thm:maggi_blowup}
Let $E \subset \R^n$ be $d$-rectifiable.
Then for $\HD^d$-almost all $x \in E$, there is a unique $d$-plane $T(x) \in G(n,d)$ so that 
\begin{align*}
r^{-d} \pushm{\left( \Phi_{x,r} \right) } {\HD^d|_E} \wra \HD^d|_{T(x)}.
\end{align*}
We call $T(x)$ the approximate tangent $d$-plane to $E$ at $x$.
\end{thm}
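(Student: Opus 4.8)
The plan is to obtain this classical blow-up statement from the structure theory of rectifiable sets together with a direct computation of the tangent measure of a $C^1$ submanifold. First I would invoke the standard decomposition of a $d$-rectifiable set: there exist countably many embedded $d$-dimensional $C^1$ submanifolds $\{M_k\}_{k \ge 1}$ of $\R^n$ and a disjoint Borel partition $E = N \cup \bigsqcup_{k} F_k$ with $\HD^d(N) = 0$ and $F_k \subset M_k$ for each $k$ (obtained by applying the Whitney extension theorem to Lipschitz parametrizations of $E$; see Federer's book or \cite{MAGGI}). In particular $\HD^d|_E$ is $\sigma$-finite, and by Theorem 16.2 of \cite{Mattila} one has $\lim_{r \da 0} r^{-d} \HD^d(E \cap B(x,r)) = 1$ for $\HD^d$-a.e.\ $x \in E$. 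Next I would record two density facts, each a routine consequence of the Lebesgue density theorem for Radon measures: for $\HD^d$-a.e.\ $x \in F_k$,
\[
\lim_{r \da 0} r^{-d} \HD^d\big( (E \setminus M_k) \cap B(x,r) \big) = 0 = \lim_{r \da 0} r^{-d} \HD^d\big( (M_k \setminus E) \cap B(x,r) \big).
\]
The first limit follows from the fact that $\HD^d|_E$-a.e.\ point of $M_k$ is a density point of $M_k$, combined with the density-one estimate above; the second follows by applying the Lebesgue density theorem on $M_k$ (on which $\HD^d|_{M_k}$ has density $1$ everywhere) to the Borel set $E$, using that $\HD^d|_{M_k}$ and $\HD^d|_E$ agree on $F_k$.

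The heart of the matter is the blow-up of a single $C^1$ submanifold, which I would establish directly. Fix $x \in M_k$, put $T \coloneqq T_x M_k \in G(n,d)$, and, after translating $x$ to the origin and rotating so that $T = \R^d \times \{0\}$, write $M_k$ near the origin as the graph of a $C^1$ map $\psi \colon B_d(0, \rho_0) \ra \R^{n-d}$ with $\psi(0) = 0$ and $D\psi(0) = 0$. For small $r > 0$ the image $\Phi_{0,r}(M_k)$ coincides near the origin with the graph of $\psi_r(y) \coloneqq r^{-1} \psi(ry)$, and $D\psi_r(y) = D\psi(ry) \to 0$ uniformly on compact sets as $r \da 0$. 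By the area formula, for $\phi \in C_c(\R^n)$,
\[
\int \phi \; d\big( r^{-d} \pushm{(\Phi_{0,r})}{\HD^d|_{M_k}} \big) = \int \phi\big( y, \psi_r(y) \big) \sqrt{ \det\big( I + D\psi_r(y)^\top D\psi_r(y) \big) } \; dy \xrightarrow[r \da 0]{} \int_{\R^d} \phi(y, 0) \; dy,
\]
the convergence holding by dominated convergence, since for $r$ small the domain of integration lies in a fixed compact subset of $\R^d$ containing the projection of $\spt \phi$. As the right-hand side equals $\int \phi \; d\HD^d|_T$ under our normalization of $\HD^d$, this gives $r^{-d} \pushm{(\Phi_{x,r})}{\HD^d|_{M_k}} \wra \HD^d|_{T_x M_k}$ as $r \da 0$.

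Finally I would combine these pieces. Take $x$ in the full-$\HD^d$-measure subset of $E$ on which all of the above applies, say $x \in F_k$, and set $T(x) \coloneqq T_x M_k$. For $\phi \in C_c(\R^n)$ with $\spt \phi \subset B(0, \rho)$,
\[
\Big| \int \phi \; d\big( r^{-d} \pushm{(\Phi_{x,r})}{\HD^d|_E} \big) - \int \phi \; d\big( r^{-d} \pushm{(\Phi_{x,r})}{\HD^d|_{M_k}} \big) \Big| \le \norm{\phi}_\infty \, r^{-d} \HD^d\big( \big( (E \setminus M_k) \cup (M_k \setminus E) \big) \cap B(x, \rho r) \big),
\]
which tends to $0$ by the two density facts, while the second integral converges to $\int \phi \; d\HD^d|_{T(x)}$ by the submanifold computation; hence $r^{-d} \pushm{(\Phi_{x,r})}{\HD^d|_E} \wra \HD^d|_{T(x)}$, as desired. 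Uniqueness of $T(x)$ is automatic, since weak limits of measures are unique and $\HD^d|_P$ determines the plane $P$ (as its support). The only substantive inputs are the $C^1$ decomposition of a rectifiable set and the Lebesgue density theorem; the part I expect to require the most care — were one to write everything out — is the $C^1$ decomposition via Whitney extension, but as this is entirely standard I would, like the paper, simply cite \cite{MAGGI}.
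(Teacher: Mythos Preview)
Your argument is correct and is precisely the standard route to this blow-up theorem: decompose $E$ into pieces lying on $C^1$ submanifolds, use Lebesgue density to reduce to the submanifold case, and compute the blow-up of a $C^1$ graph via the area formula. One small bookkeeping point: with the paper's normalization $\HD^d(B(x,r)\cap P)=r^d$, the measure $\HD^d|_{\R^d}$ differs from Lebesgue measure by the constant $\omega_d^{-1}$, so the area-formula computation picks up this factor; this does not affect the argument but is worth tracking if you write it out.

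As for comparison with the paper: there is nothing to compare. The paper does not prove Theorem~\ref{thm:maggi_blowup}; it is stated with the attribution ``see Theorem~10.2, \cite{MAGGI}'' and used as a black box. Your proposal therefore supplies strictly more than the paper does on this point, and your closing remark --- that you would ultimately just cite \cite{MAGGI} for the $C^1$ decomposition --- is exactly what the paper does for the entire statement.
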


Finally, we introduce the quantity $\gamma(E)$ as in \cite{BLATT} which is the one appearing in Condition \ref{cond:bmo} in Theorem \ref{thm:main}.
Suppose that $E \subset \R^n$ is $d$-Ahlfors regular and $d$-rectifiable, so that $E$ has approximate $d$-planes $T(x)$ for $\HD^d$-almost all $x \in E$ by Theorem \ref{thm:maggi_blowup}.
Define
\begin{align*}
\gamma(E)  & \coloneqq \sup_{x \in E, r >0} \inf_{V \in G(n,d)}   \left(   \fint_{B(x,r)} \norm{ \pi_{T(x)} - \pi_V } \; d\HD^d|_E(x)  + \sup_{y \in B(x,r) \cap E} \dfrac{ \abs{\pi_{V^\perp}(y-x)}}{r} \right).
\end{align*}
Our intent in this section is to estimate this $\gamma(E)$ when $E$ is Reifenberg flat as in the following Theorem. In the next section, we will then use arguments from \cite{BLATT} to construct Lipschitz graph approximations to $E$ when $\gamma(E)$ is small.
\begin{thm}\label{thm:reif_bmo}
There are constants $C_0, \delta_0 >0$ depending only on $n$ and $d$ so that whenever $\delta \in (0, \delta_0)$, $E \subset \R^n$ is $d$-Ahlfors upper regular with constant $(1 + \delta)$ and $\delta$-Reifenberg flat, then $E$ is $d$-rectifiable, and moreover, $\gamma(E) \le C_0 \delta^{1/2}$. In addition, $E$ is lower $d$-Ahlfors regular with constant $(1+ C_0 \delta)$.
\end{thm}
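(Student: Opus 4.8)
The plan is to prove the three assertions of Theorem~\ref{thm:reif_bmo} in sequence, leveraging the well-understood consequences of Reifenberg flatness. First, $d$-rectifiability of $E$: this is a standard consequence of $\delta$-Reifenberg flatness for $\delta$ small, since $\delta$-Reifenberg flat sets with $\delta < \delta_n$ are locally bi-H\"older images of $d$-planes (Reifenberg's topological disk theorem, cf. \cite{REIF, DTREIF}), hence $d$-rectifiable; alternatively one can cite that upper Ahlfors regular $\delta$-Reifenberg flat sets satisfy the $\beta$-number condition of Jones/David-Semmes and are uniformly rectifiable, hence rectifiable. Either way, $E$ then has approximate tangent planes $T(x) \in G(n,d)$ for $\HD^d$-a.e.\ $x$ by Theorem~\ref{thm:maggi_blowup}, so $\gamma(E)$ is well-defined.

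Next, the lower Ahlfors regularity with constant $(1+C_0\delta)$. Fix $x \in E$ and $r>0$, and let $P$ be a $d$-plane realizing (nearly) the infimum in $b\beta_{\infty,E}(x,r)$, so $\dist(y,P) \le \delta r$ for all $y \in E \cap B(x,r)$ and $\dist(z,E) \le \delta r$ for all $z \in P \cap B(x,r)$. The second condition forces $E \cap B(x,r)$ to shadow a large portion of $P$: the orthogonal projection $\pi_P(E \cap B(x,(1-C\delta)r))$ covers $P \cap B(x,(1-C\delta)r)$ up to an error set of measure $\le C\delta r^d$ (any point of $P$ at distance $\ge \delta r$ from $\partial(P \cap B(x,r))$ has an $E$-point within $\delta r$, whose projection is within $C\delta r$). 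Since $\pi_P$ is $1$-Lipschitz, $\HD^d(E \cap B(x,r)) \ge \HD^d(\pi_P(E\cap B(x,r))) \ge (1 - C\delta)r^d \ge (1+C_0\delta)^{-1} r^d$ after adjusting constants. (For small scales near a compact set in the local version, the same argument runs.)

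The main content is the bound $\gamma(E) \le C_0\delta^{1/2}$. Fix $x \in E$, $r>0$, and a best-approximating plane $P = P_{x,r}$ as above; set $V = P - \pi_P(x) \in G(n,d)$ (the direction of $P$, translated so $x \in x + V$). The term $\sup_{y \in B(x,r)\cap E} |\pi_{V^\perp}(y-x)|/r$ is then $\le C\delta$ immediately from $\dist(y,P)\le \delta r$ and $x \in P$ (modulo the $\le \delta r$ slack in $x$'s membership, handled by enlarging $P$ slightly). For the oscillation-of-tangents term $\fint_{B(x,r)} \|\pi_{T(z)} - \pi_V\|\, d\HD^d|_E(z)$, I would control the \emph{angle} between $T(z)$ and $V$ at most points $z$. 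The key observation: at $\HD^d$-a.e.\ $z \in E \cap B(x,r/2)$ and for a.e.\ small scale $t$, the blow-up of $E$ at $z$ at scale $t$ is close to $T(z)$; but by Reifenberg flatness at $(z,t)$ it is also $\delta$-close to a $d$-plane $P_{z,t}$, and such planes are close to $V$ whenever they also well-approximate $E$ in a ball comparable to $B(x,r)$ — this is the standard "coherence of approximating planes across scales" estimate giving $\angle(P_{z,t}, V) \le C\delta$ for $t \simeq r$. Passing $t \downarrow 0$ along a sequence where the tangent exists, $\angle(T(z), V) \le C\delta$ for a.e.\ such $z$, hence $\|\pi_{T(z)} - \pi_V\| \le C\delta$. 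The worry is the \emph{exceptional set}: at points $z$ where this control degrades, one only has the trivial bound $\|\pi_{T(z)} - \pi_V\| \le 2$. Here is where the $\sqrt\delta$ (rather than $\delta$) enters, and where I expect the main obstacle to lie. The strategy to control the bad set's measure: use the upper Ahlfors regularity constant $(1+\delta)$ together with the just-proved lower regularity constant $(1+C\delta)$ to show $E \cap B(x,r)$ has $\HD^d$-measure within $C\delta r^d$ of $\omega_d r^d$, and simultaneously that $\pi_V(E \cap B(x,r))$ has nearly this measure (by the shadowing argument above); since $\pi_V$ restricted to $E$ has Jacobian $\le 1$ with the deficit controlled by $\sin^2 \angle(T(z),V)$ via the area formula, we get
\begin{align*}
\int_{E \cap B(x,r)} \bigl(1 - \text{Jac}\,\pi_V|_E(z)\bigr)\, d\HD^d|_E(z) \le C\delta r^d,
\end{align*}
and $1 - \text{Jac}\,\pi_V|_E(z) \simeq \frac12 \|\pi_{T(z)} - \pi_V\|^2$ (up to higher order), so $\fint_{B(x,r)} \|\pi_{T(z)} - \pi_V\|^2 \, d\HD^d|_E \le C\delta$, and Cauchy--Schwarz gives $\fint \|\pi_{T(z)} - \pi_V\| \le C\delta^{1/2}$. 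This is the crux; the area-formula/Jacobian deficit bookkeeping in arbitrary codimension (where $\text{Jac}\,\pi_V|_E = \prod_i \cos\theta_i$ for principal angles $\theta_i$, and $1 - \prod \cos\theta_i$ compares to $\sum \sin^2\theta_i \simeq \|\pi_{T(z)}-\pi_V\|^2$) is the part requiring care, but it is routine linear algebra once the measure-deficit inequality is in hand. Combining the two terms yields $\gamma(E) \le C_0 \delta^{1/2}$, completing the proof.
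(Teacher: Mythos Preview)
Your area-formula route to the $\gamma(E)$ bound is sound and more direct than the paper's, but one step is missing a genuine idea.

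\textbf{The gap: projection surjectivity.} Both your lower-regularity estimate and your key inequality $\int_{E\cap B}(1-J_{\pi_V})\,d\HD^d|_E \le C\delta r^d$ need $\pi_P(E\cap B(x,r))$ to cover nearly all of $P\cap B(x,(1-C\delta)r)$. Your justification---every interior point of $P\cap B$ has an $E$-point within $\delta r$, whose projection lands within $C\delta r$---only shows the image is $C\delta r$-\emph{dense}, which says nothing about its measure (a countable dense set has measure zero). Actual surjectivity of $\pi_P$ onto a slightly smaller disk in $P$ requires a topological input: the paper invokes Reifenberg's topological disk theorem, so that $E\cap B$ is a bi-H\"older $d$-disk and a degree/connectedness argument forces $\pi_P$ to hit every point of $P\cap B(p,(1-4\delta)r)$. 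With exact surjectivity in hand, your computation goes through: $\int_{E\cap B} J_{\pi_V} = \int_P N \ge (1-C\delta)^d r^d$, hence $\int(1-J_{\pi_V}) \le C\delta r^d$; since $1-\prod_i\cos\theta_i \ge \tfrac12\sin^2\theta_{\max} = \tfrac12\|\pi_{T}-\pi_V\|^2$, Cauchy--Schwarz finishes. The paper instead pushes $\HD^d|_E$ forward to a measure $\nu$ on $P$, shows via a blow-up argument that the lower density of $\nu$ is $\ge(1-\epsilon)^{-1}$ on the projection of $\{\|\pi_T-\pi_V\|>\epsilon\}$, and combines this with the \emph{same} surjectivity to bound the bad set by $C\delta\epsilon^{-1}r^d$, then optimizes $\epsilon=\delta^{1/2}$. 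Your approach skips the level-set detour, which is a genuine simplification, but both rest on the topological lemma you omitted.

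\textbf{Two smaller points.} Bi-H\"older parametrizations do \emph{not} imply rectifiability (the von Koch snowflake is a bi-H\"older image of an interval yet purely $1$-unrectifiable); your alternative via big pieces of Lipschitz graphs and uniform rectifiability is correct and is what the paper uses (citing \cite{DTREIF}). And your preliminary ``coherence of planes'' heuristic cannot give $\angle(T(z),V)\le C\delta$ pointwise a.e.: passing from scale $r$ down to $0$ crosses infinitely many dyadic scales, each contributing $O(\delta)$ angle drift, so there is no $L^\infty$ control on $\|\pi_{T(z)}-\pi_V\|$---only the $L^2$ control your area-formula argument correctly delivers. The $\sqrt\delta$ loss is genuine, not an artifact of an exceptional set.
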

\begin{proof}
Suppose that the hypotheses of the Theorem hold.
As long as $\delta_0 > 0$ is chosen sufficiently small (depending only on $n$ and $d$), then Theorem 15.2 in \cite{DTREIF} implies that $E$ satisfies the ``big pieces of Lipschitz graphs'' property, and thus is $d$-uniformly rectifiable (see the main Theorem in \cite{DSSIO}, or Theorem 1.57 in \cite{DSUR}).
Here we are using the fact that if $\delta_0$ is small enough, then then the Reifenberg flatness of $E$ guarantees that $E$ is lower $d$-Ahlfors regular with some bounded constant (this is made more precise in the following paragraph).
In particular, $E$ is $d$-rectifiable, and thus $T(x)$ exists for $\HD^d$-almost all $x \in E$.
Denote the set of all such $x$ by $E'$.

Fix $x \in E, r >0$, and denote by $P \in A(n,d)$ some choice of a $d$-plane so that $d_{x, 2r}(E, P) = b\beta_{\infty, E}( x, 2r) \le \delta$.
Notice that by translation invariance of the hypotheses and conclusion of the Theorem, we may as well assume that $P \in G(n,d)$.
Recall that $d_{x,r}$ is the normalized local Hausdorff distance, so that by definition,
\begin{align*}
\sup_{y \in E \cap \overline{B(x,2r)}} \dist(y, P) + \sup_{y \in P \cap \overline{B(x,2r)}} \dist(y, E) \le 2 \delta r.
\end{align*} 
Choose some point $p \in P$ so that $\abs{x-p} \le 2 \delta r$, and thus we have that $B(p, (1-2\delta)r) \subset B(x, r)$.
By Reifenberg's topological disk Theorem, we have that if $\delta_0$ is chosen sufficiently small depending only on $n$ and $d$, then necessarily $E$ is a $C^\alpha$-topological, $d$-disk \cite{REIF} (see also Section 3 in \cite{DKTDOUBLE} or \cite{REIFNEW} for other proofs).
In particular, since $E$ is very well approximated by $P$ in $B(x,2r)$, one can argue by contradiction to show that for $y \in P \cap B(p, r)$, there is some $x_y \in E \cap B(p, (3/2)r)$ so that $\pi_{P}(x_y) = y$, provided that $\delta_0$ is sufficiently small.
The argument is essentially contained in Lemma 8.3 of \cite{DKTDOUBLE}, so we omit the details.
This conclusion on projections also gives that $E$ is lower $d$-Ahlfors regular as follows.

Notice that in fact, the above implies that to each $y \in P \cap B(p, (1-4\delta)r)$, there is some $x_y \in E \cap B(p, (1-2\delta)r)$ for which $\pi_P(x_y) = y$.
Indeed, just choose the $x_y$ from above, so that $x_y \in E \cap B(p, (3/2)r)$.
Then we estimate 
\begin{align*}
\abs{x_y -p} & \le \abs{x_y - y} + \abs{y - p} \\
& = \abs{x_y - \pi_P(x_y)} + \abs{y - p} \\
& = \dist(x_y, P) + \abs{y-p} \\
& < 2\delta r +  ( 1- 4  \delta) r \\
& = ( 1- 2 \delta)r,
\end{align*}
so necessarily $x_y \in B(p, (1-2\delta)r)$. In particular, since $\pi_P$ is $1$-Lipschitz, we obtain the lower Ahlfors regularity of $E$:
\begin{align*}
\HD^d(E \cap B(x,r)) & \ge \HD^d(\pi|_P (E \cap B(x,r)) \\
& \ge \HD^d|_P(B(p, (1- 4 \delta)r)) \\
& = (1 - 4 \delta)^d r^d \\
& \ge (1 + C_0 \delta)^{-1} r^d,
\end{align*}
as long as $\delta_0$ is small enough and $C_0> 1$ is large enough. 

We are now ready to begin our estimate on the $\pi_{T(x)}$.
Fix $\epsilon > \delta$ to be determined, and set \[F \coloneqq \{x \in E' \cap B(p, (1-2\delta)r) \; : \; \norm{\pi_{T(x)} - \pi_P} > \epsilon \}, \, \tilde{F} \coloneqq \pi_P(F).\]
Define $\nu \coloneqq \pushm{(\pi_P)}{\HD^d|_{E}}$ to be the push-forward measure of $\HD^d|_E$ through the map $\pi_P$. Notice that a naive application of Chebysev gives us the estimate $\nu(F) \le C \epsilon^{-1} r^d$. The key step in estimating $\gamma(E)$ is to show the stronger estimate $\nu(F) \le C \delta \epsilon^{-1} r^d$ given that $E$ is $\delta$-Reifenberg flat. We claim that
\begin{align}
\liminf_{s \da 0} \dfrac{\nu(B(y,s))}{\HD^d|_P(B(y,s))} \ge (1 - \epsilon)^{-1} \label{eqn:reif_bmo_1}
\end{align} 
for each $y \in \tilde{F}$.
Let us show this now.

Let $y \in \tilde{F}$ so that there is some $x_y \in E' \cap B(p, (1- 2 \delta)r)$ with $\pi_P(x_y) = y$ and $\norm{\pi_{T(x_y)} - \pi_P } > \epsilon$.
Choose a unit vector $e_y \in \R^{n}$ for which $e_y \in T(x_y)$ but $\abs{\pi_P(e_y)} < 1 - \epsilon$.
We assume as well that $\pi_P(e_y) \ne 0$, but the argument when $\pi_P(e_y) = 0$ is similar, and in fact, one can show in this case that the left-hand side of (\ref{eqn:reif_bmo_1}) is $+ \infty$.
Choose unit vectors $e_1, \dotsc, e_{d-1} \in \R^n$ so that $\{ \pi_P(e_y)/\abs{\pi_P(e_y)}, e_1, \dotsc, e_{d-1} \}$ is an orthonormal basis for $P \subset \R^n$.
If we consider the set \[H_s(x_y) \coloneqq \left \{ x_y + \left(\beta_0 e_y + \sum_{i=1}^{d-1} \beta_i e_i \right) + v \; : \; v \in P^\perp, (1-\epsilon)^2 \beta_0^2 + \sum_{i=1}^{d-1} \beta_i^2 < s^2 \right\},\]
for $s \ll r$, then a direct computation of $\abs{y - \pi_P(z)} = \abs{\pi_P(x_y - z)}$ for $z \in H_s(x_y)$ using $\abs{\pi_P(e_y)}< 1- \epsilon$ gives that $\pi_P(H_s(x_y)) \subset P \cap B(y,s)$.
In particular, we obtain
\begin{align*}
\nu(B(y,s)) & = \HD^d|_E(\pi_P^{-1}(B(y,s) \cap P)) \nonumber \\
& \ge \HD^d|_E(H_s(x_y)),
\end{align*}
so that 
\begin{align}
\dfrac{\nu(B(y,s))}{\HD^d|_P(B(y,s))} & = \dfrac{\nu(B(y,s))}{s^d} \nonumber \\
& \ge \dfrac{\HD^d|_E(H_s(x_y))}{s^d} \nonumber  \\
& = \dfrac{\HD^d|_E( s H + x_y ) }{s^d}, \label{eqn:reif_bmo_2}
\end{align}
where $s H + x_y \equiv H_s(x_y)$ defines the set $H$.
However, $H$ contains a $d$-ellipsoid $A \subset T(x_y)$ of the form 
\begin{align*}
A \coloneqq \{ \beta_0 e_y + \sum_{i=1}^{d-1} \beta_i e_i' \; : \; (1 - \epsilon)^2 \beta_0^2 + \sum_{i=1}^{d-1} \beta_i^2 < 1 \},
\end{align*}
where $\{e_y, e_1', \dotsc, e_{d-1}'\}$ is some orthonormal basis for $T(x_y)$.
Thus, in view of Theorem \ref{thm:maggi_blowup}, (\ref{eqn:reif_bmo_2}), and the fact that $\HD^d|_{T(x_y)}(\partial A) = 0$, we obtain that 
\begin{align}
\liminf_{s \da 0} \dfrac{\nu(B(y,s))}{\HD^d|_P(B(y,s))} & \ge \lim_{s \da 0} \dfrac{\HD^d|_E( s H + x_y ) }{s^d} \nonumber \\
& = \lim_{s \da 0} s^{-d}\pushm{(\Phi_{x_y, s})}{\HD^d|_E} \left( H \right)  \nonumber \\
&  = \HD^d|_{T(x_y)}(A) \nonumber \\
& = (1 - \epsilon)^{-1}, \label{eqn:rad_low}
\end{align}
proving (\ref{eqn:reif_bmo_1}).
Here we have used the fact that if $\mu_n \wra \mu$ and $B \subset \R^n$ is Borel and bounded with $\mu(\partial B) = 0$, then $\mu_n(B) \ra \mu(B)$ (see for example, Proposition 4.26 in \cite{MAGGI}). We are also using our convention that $\HD^d$ is normalized so that if $B$ is a ball of radius $r$ centered on $T(x_y)$, then $\HD^d|_{T(x_y)}(B) = r^d$, which gives the value of $\HD^d|_{T(x_y)}(A)$ above.

Finally, we get an estimate on the size of $\HD^d|_E(F)$.
First, Theorem 2.13(2) in \cite{Mattila} along with estimate \eqref{eqn:rad_low} implies
\begin{align}
\HD^d|_E(F)  = \nu ( \tilde{F})  & \ge \int_{\tilde{F}} \limsup_{r \da 0} \dfrac{\nu(B(y,r))}{\HD^d|_P(B(y,r))} \; d\HD^d|_P(y)  \ge (1-\epsilon)^{-1} \HD^d|_P(\tilde{F}), \label{eqn:reif_bmo_3}
\end{align}
(in particular, we do not need $\nu \ll \HD^d|_P$ to get the first inequality above).
Denote $G = E \cap B(p, (1-2r)) \setminus F$.
Recalll that $\pi_P(E \cap B(p, (1- 2 \delta)r)) \supset P \cap B(p, (1-4\delta)r)$ and the fact that $\HD^d|_E( B(p, (1-2\delta)r) \setminus (F \cup G)) = 0$. Hence  the inequality (\ref{eqn:reif_bmo_3}), the fact that $\HD^d|_E$ is upper $d$-Ahlfors regular with constant $(1+\delta)$, and the fact that $\pi_P$ is $1$-Lipschitz gives
\begin{align*}
(1 - 4\delta)^d r^d & = \HD^d|_P(B(p, (1-4\delta)r))) \\
 & \le \HD^d(\pi_P(G)) + \HD^d|_P(\tilde{F}) \\
 & \le \HD^d|_E(G) + (1- \epsilon) \HD^d|_E(F) \\
 & = \HD^d|_E(B(p, (1-2\delta)r)) - \epsilon \HD^d|_E(F) \\
 & \le (1 + \delta) (1-2\delta)^d r^d - \epsilon \HD^d|_E(F). 
\end{align*}
Rearranging for $\HD^d|_E(F)$ yields that 
\begin{align*}
\HD^d|_E(F) & \le \epsilon^{-1} \left( (1 + \delta) - (1 - 4\delta)^d) \right)r^d  \le C \delta \epsilon^{-1} r^d,
\end{align*}
where $C$ is some constant depending only on $d$.
Thus since $B(x, (1- 4\delta)r) \subset B(p, (1- 2 \delta)r)$, 
\begin{align*}
\int_{B(x, (1-4\delta)r)} \norm{\pi_{T(x)} - \pi_P} \; d\HD^d|_E & \le \int_{G} \norm{ \pi_{T(x)} - \pi_P} \; d\HD^d|_E + \int_{F} \norm{ \pi_{T(x)} - \pi_P } \; d\HD^d|_E \\
& \le \epsilon \HD^d|_E(B(x, r)) + 2 \HD^d(F) \\
& \le C (\epsilon + \delta \epsilon^{-1}) r^d  \\
& \le C \delta^{1/2} r^d
\end{align*}
by taking $\epsilon = \delta^{1/2}$. 

Now we take on the second term of $\gamma(E)$; this estimate comes directly by choice of $P$.
Notice that for any $y \in B(x, (1-4\delta)r) \cap E$ we have that 
\begin{align*}
\abs{ \pi_{P^\perp}(y-x)} & = \abs{y - \pi_P(y) - x + \pi_P(x) } \\
& \le \abs{ y - \pi_P(y)} + \abs{ x - \pi_P(x) } \\
& \le 2 \delta r,
\end{align*}
by choice of $P$.
Thus, altogether we've shown that 
\begin{align*}
\int_{B(x, (1-4 \delta)r)} \norm{\pi_{T(x)} - \pi_P } \; d\HD^d|_E + \sup_{y \in E \cap B(x, (1-4\delta)r)} \dfrac{\abs{\pi_{P^\perp}(y-x)}}{r} \le C \left(\delta^{1/2} + C \delta\right) r^d,
\end{align*}
from which we readily see that $\gamma(E) \le C \delta^{1/2}$. \qedhere
\end{proof}
\section{\texorpdfstring{$\gamma(E)$}{oscillation of tangents} small implies good Lipschitz approximations to \texorpdfstring{$E$}{E}} \label{sec:gamma_ur}

In this last section, we briefly detail how the work of \cite{BLATT} demonstrates that \ref{cond:bmo} gives \ref{cond:delta_ur}.
The argument involving the Hardy-Littlewood Maximal function goes back to the co-dimension 1 case in \cite{CASI}.
Under different assumptions on a domain $\Omega$ when $\partial \Omega$ is $(n-1)$-Ahlfors regular, there is also a proof in \cite{HMT10}.
Our goal here is to point the reader to the fact that for these particular arguments in \cite{BLATT}, one does not need $E$ to be a $C^1$ $d$-dimensional chord-arc submanifold, as long as we instead assume Ahlfors regularity of $E$.
Since all of the arguments exist in this work, we only enumerate the various steps in the proof.
Let us state precisely the Theorem that can be obtained.

\begin{thm}[Theorem 3.1 in \cite{BLATT}] \label{thm:gamma_ur}
There are constants $\delta_0, C_0 >0$ depending only on $n$, $d$, and $C_E$, so that whenever $E$ is lower $d$-Ahlfors regular with constant $(1+\delta)$, upper $d$-Ahlfors regular with constant $C_E >0$, and $\gamma(E) \le \delta < \delta_0$, then $E$ is $(C_0 \delta^{1/2})$-UR. 
\end{thm}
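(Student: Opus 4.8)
The plan is to carry out the maximal-function construction of Semmes \cite{CASI} in the form used by Blatt \cite{BLATT}, replacing the role of the gradient of a graphing function (available only in the $C^1$ setting) by the approximate tangent plane $T(\cdot)$, and using only that $E$ is Ahlfors regular. Fix $x_0\in E$ and $r_0>0$; after translating and dilating we may assume $x_0=0$, $r_0=1$, and $E$ is $d$-rectifiable so that $T(x)$ exists $\HD^d$-a.e.\ (Theorem \ref{thm:maggi_blowup}). The hypothesis $\gamma(E)\le\delta$, applied at an arbitrary ball $B(z,t)$, provides a plane $V_{z,t}\in G(n,d)$ with
\begin{align*}
\fint_{B(z,t)}\norm{\pi_{T(y)}-\pi_{V_{z,t}}}\,d\HD^d|_E(y)+\sup_{y\in B(z,t)\cap E}\frac{\abs{\pi_{V_{z,t}^\perp}(y-z)}}{t}\le\delta .
\end{align*}
Applying this at $B(0,R_0)$ for a large dimensional constant $R_0=R_0(n,d)$, I fix $V:=V_{0,R_0}$, write $\pi=\pi_V$, $\pi^\perp=\pi_{V^\perp}$, and identify $V=\R^d$.

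Next I would introduce the good set via a Hardy--Littlewood maximal function on the space of homogeneous type $(E,\HD^d|_E)$. Setting $\phi:=\norm{\pi_{T(\cdot)}-\pi_V}\chi_{B(0,R_0)}$ and $\epsilon:=A\delta^{1/2}$ for a large constant $A=A(n,d,C_E)$, define
\begin{align*}
G:=\Big\{x\in E\cap B(0,1)\ :\ \sup_{0<t\le R_0/4}\ \fint_{B(x,t)\cap E}\phi\,d\HD^d|_E\le\epsilon\Big\}.
\end{align*}
Since $\HD^d|_E$ is $d$-Ahlfors regular with constants controlled by $n,d,C_E$, the uncentered maximal operator is weak type $(1,1)$, and $\int\phi\,d\HD^d|_E\le\delta\cdot\HD^d(E\cap B(0,R_0))\le C\delta$, so $\HD^d\big((E\cap B(0,1))\setminus G\big)\le C\epsilon^{-1}\int\phi\le CA^{-1}\delta^{1/2}\le\delta^{1/2}$ for $A$ large.

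The key step — and the one to take from Blatt's Lemma 3.2 / Semmes — is a two-sided cone estimate on $G$. For $x\in G$ and any $y\in E$ with $t:=\abs{y-x}\le R_0/4$, apply the $\gamma(E)$ bound at $B(x,t)$ to get $V_{x,t}$ with $E\cap B(x,t)$ within $\delta t$ of $x+V_{x,t}$; since the operator norm $\norm{\pi_{V_{x,t}}-\pi_V}$ does not depend on the integration variable, the triangle inequality and $x\in G$ give
\begin{align*}
\norm{\pi_{V_{x,t}}-\pi_V}=\fint_{B(x,t)\cap E}\norm{\pi_{V_{x,t}}-\pi_V}\,d\HD^d|_E\le\fint_{B(x,t)\cap E}\norm{\pi_{V_{x,t}}-\pi_{T(\cdot)}}+\norm{\pi_{T(\cdot)}-\pi_V}\,d\HD^d|_E\le\delta+\epsilon,
\end{align*}
whence $\abs{\pi^\perp(y-x)}\le(\delta+2\epsilon)t\le C\delta^{1/2}\abs{y-x}$. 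In particular $\pi|_G$ is injective and its inverse $g_0:\pi(G)\to G$ is $(1+C\delta^{1/2})$-Lipschitz, $\pi^\perp\circ g_0$ is $C\delta^{1/2}$-Lipschitz, and extending it by Kirszbraun to a $C\delta^{1/2}$-Lipschitz map $F:V\to V^\perp$, the graph $\Gamma:=\{z+F(z):z\in V\}$ is a $d$-dimensional Lipschitz graph of constant $\le C\delta^{1/2}$ containing $G\subset\Gamma\cap E$.

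Finally I would check the three clauses of Definition \ref{defn:delta_ur} with $C_0\delta^{1/2}$ in place of $\delta$. The Lipschitz constant is $\le C\delta^{1/2}$. Since $0\in E$, lower $d$-Ahlfors regularity gives $\HD^d(E\cap B(0,1))\ge(1+\delta)^{-1}$, so $\HD^d(G)\ge 1-C\delta^{1/2}$ and, $g_0$ being $(1+C\delta^{1/2})$-Lipschitz, $\HD^d(\pi(G))\ge 1-C\delta^{1/2}$; as $\pi(G)\subset V\cap B(0,1)$ and $\HD^d(V\cap B(0,1))=1$, we get $\HD^d\big((V\cap B(0,1))\setminus\pi(G)\big)\le C\delta^{1/2}$. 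Because $\Gamma\cap\pi^{-1}(\pi(G))=g_0(\pi(G))=G\subset E$, the area formula for the $C\delta^{1/2}$-Lipschitz graph $\Gamma$ yields $\HD^d\big((\Gamma\setminus E)\cap B(0,1)\big)\le(1+C\delta)\HD^d\big((V\cap B(0,1))\setminus\pi(G)\big)\le C\delta^{1/2}$, while $(E\setminus\Gamma)\cap B(0,1)\subset(E\cap B(0,1))\setminus G$ (since $G\subset\Gamma$) has measure $\le\delta^{1/2}$; hence $\HD^d|_{B(0,1)}(E\Delta\Gamma)\le C\delta^{1/2}$. Nonemptiness of $\Gamma\cap B(0,1/2)$ follows from $G\cap B(0,1/2)\neq\emptyset$ (its complement in $E\cap B(0,1/2)$ has measure $\le\delta^{1/2}\ll (1+\delta)^{-1}(1/2)^d$ for $\delta_0$ small) together with $G\subset\Gamma$. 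Undoing the rescaling gives the claimed $(C_0\delta^{1/2})$-UR bound with $C_0=C_0(n,d,C_E)$. I expect the cone estimate of the third paragraph to be the only substantive point — it is where the scale-invariant flatness in $\gamma(E)$ must be matched against the maximal-function control of the tangent tilt — with the remaining steps (weak-type bound, Kirszbraun extension, area-formula bookkeeping for $\Gamma\setminus E$) being routine once Ahlfors regularity of $E$ is available.
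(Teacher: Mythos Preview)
Your proof is correct and follows essentially the same Semmes--Blatt maximal-function approach as the paper: both define a good set via the maximal function of $\norm{\pi_{T(\cdot)}-\pi_V}$, establish a cone estimate on it (your third paragraph is exactly the content of Steps~2 and~4 of \cite[Lemma~3.2]{BLATT} that the paper cites) to obtain the $C\delta^{1/2}$-Lipschitz graph $\Gamma\supset G$, and then bound $\HD^d(E\Delta\Gamma)$. The one notable difference is that the paper invokes the John--Nirenberg exponential decay $\HD^d(B)\le Ce^{-a\tau/\delta}R^d$ from Step~1 of \cite[Lemma~3.2]{BLATT}, whereas you use only the weak-type $(1,1)$ bound to get $\HD^d((E\cap B(0,1))\setminus G)\le C\delta^{1/2}$; this simplification is harmless here since the $\Gamma\setminus E$ estimate is the $\delta^{1/2}$ bottleneck in either argument, and your projection-based bookkeeping for $\HD^d(\Gamma\setminus E)$ is in fact slightly cleaner than the paper's subtraction $\HD^d(\Gamma\cap B)-\HD^d(E\cap B)$.
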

\begin{proof}
Recall that $\gamma(\cdot)$ is only defined on $d$-rectifiable subsets, so we assume $\gamma(E) \le \delta$ and $E$ is $d$-rectifiable.
Thus Theorem \ref{thm:maggi_blowup} applies so that $E$ has approximate tangent $d$-planes almost everywhere in $E$.
We continue as in the proof of Lemma 3.2 of \cite{BLATT}. 

Fix $x_0 \in E, R >0$, and $\tau \in (10 \delta, 1/3)$.
Denote by $\mathcal{M}_R$ for $R >0$ the variant of the Hardy-Littlewood Maximal function, $\mathcal{M}_Rf(x) = \sup_{0 < r < R} \fint_{B(y,r)} \abs{f} \; d\HD^d|_E$ for $x \in E$ and $\mathcal{M}_R f(x) =0 $ otherwise. 

Set
\begin{align*}
F & \coloneqq \{y \in B(x_0,R) \cap E \; : \; \mathcal{M}_{4R}( \norm{ \pi_T - \pi_{T_{x_0, 4R}}} )(y) \le \tau \}, \\
B & \coloneqq (B(x_0, R) \cap E) \setminus F,
\end{align*}
where $T(y)$ is the approximate tangent $d$-plane to $E$ at $y$ and $T_{x_0, 4R} \in G(n,d)$ is a $d$-plane minimizing the quantity 
\begin{align*}
 \fint_{B(x_0, 4R)}  \norm{ \pi_{T(y)} - \pi_V } \; d\HD^d|_E(x)  + \sup_{y \in B(x_0,4R) \cap E} \dfrac{ \abs{\pi_{V^\perp}(y-x_0)}}{4R}
\end{align*}
over all $V \in G(n,d)$.
Then Step 1 in \cite[Lemma 3.2]{BLATT} (which only uses the Ahlfors regularity of $E$, and the definition of $\gamma(E)$) gives that for $\delta_0$ chosen small enough, there are uniform constants $a, C>0$ depending only on $n$, $d$, and $C_E$ so that 
\begin{align}
\HD^d(B) \le C e^{-a \tau/\delta} R^d.
\end{align}
Steps 2 and 4 of the proof of the same Lemma (which again, only use Ahlfors regularity of $E$ and Step 1) then give that for $\delta_0$ sufficiently small, there is a Lipschitz graph $\Gamma$ with constant $\le C \tau$ for which $F \subset \Gamma$.
Consequently, setting $\tau = \delta^{1/2}$ gives
\begin{align}
\HD^d((B(x_0, R) \cap (E \setminus \Gamma) ) & \le C e^{-a \delta^{-1/2}} R^d \nonumber \\
& \le C \delta R^d. \label{eqn:gamma_ur_1}
\end{align}
The fact that $\HD^d|_E$ is lower $d$-Ahlfors regular with constant $(1 + \delta)$ and the norm on the Lipschitz constant of $\Gamma$ readily give
\begin{align*}
\HD^d(B(x_0, R) \cap (\Gamma \setminus E)) & = \HD^d(\Gamma \cap B(x_0, R)) - \HD^d(E \cap B(x_0, R)) \\
& \le \left ( \sqrt{1 + C \delta} - ( 1 + \delta)^{-1} \right) R^d \\
& \le C \delta^{1/2} R^d.
\end{align*}
Together with \eqref{eqn:gamma_ur_1}, this shows that $E$ is $(C_0 \delta^{1/2})$-UR.
\end{proof}

In all, Theorem \ref{thm:gamma_ur} thus concludes the proof of Theorem \ref{thm:main}.
\section{A comment on local results and chord-arc domains with small constant} \label{sec:local}
As mentioned in the introduction, Theorem \ref{thm:main} has local and ``vanishing'' versions, which correlate more closely to the local definition of $\delta$-chord-arc domains as in \cite{KT99}. Instead of formulating very precise local definitions here, we simply remark the following about the proofs of Theorems \ref{thm:ur_implies_corona}, \ref{thm:corona_alpha}, \ref{thm:reif_bmo}, and \ref{thm:gamma_ur}.
In each of these proofs, the conclusion of the Theorem is deduced inside a ball $B(x,r)$ centered on $E$ using information about $E$ in the ball $B(x, C_0r)$ up to scale $C_0 r$ for some dimensional constant $C_0 = C_0(n,d, C_E) >0$, \textit{except} for Theorem \ref{thm:ur_implies_corona}.
In the argument of Theorem \ref{thm:ur_implies_corona}, we used information about $E$ inside the larger ball $B(x, C_0 r)$ up to the scale $ C_0 \delta^{-\theta'} r$ where $\theta' \in (0, 1/4d)$. 
This means that the local version of Theorem \ref{thm:main} should be loosely formulated in the following way. 

\begin{thm}[Local version of Theorem \ref{thm:main}]\label{thm:main_loc}
Fix $n, d \in \N$ with $0 < d < n$ and $C_E >0$. Then there are constants $\tau_0, \theta_0, \delta_0 \in (0,1)$ depending only on $n, d$ and $C_E$ so that the following holds.

Suppose that $E$ is a set which satisfies 
\begin{align*}
C_E^{-1} r^d \le \HD^d(E \cap B(x,r)) \le C_E r^d
\end{align*}
for each $x \in E \cap B(0,R)$ and $0 < r < r_0$. Assume in addition that any one of the conditions \ref{cond:delta_ur}-\ref{cond:bmo} hold for $x \in E \cap B(0, R)$ and $0 < r < r_0$ with constant $\delta \in (0, \delta_0)$. Then the rest hold for all points $x \in E \cap B(0, \tau_0 R_0)$ and scales $0 < r < \delta \tau_0 r_0$ with constant $\delta^{\theta_0}$.
\end{thm}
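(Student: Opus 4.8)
The plan is to localize, one implication at a time, the cyclic chain of implications that constitutes the proof of Theorem~\ref{thm:main}, and then to compose them around the cycle while carefully bookkeeping how the ambient ball and the admissible range of scales dilate at each step. Throughout I write $C_0$ for a constant depending only on $n,d,C_E$ (allowed to increase from line to line), and I assume, as I may, that $r_0\le R$.

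First I would fix a dyadic lattice adapted to the problem. Since $E$ is only assumed $d$-Ahlfors regular inside $B(0,R)$ at scales below $r_0$, I would extend $E\cap B(0,3R/4)$ to a closed, globally $C_0$-Ahlfors regular set $\hat E$ (a standard extension), fix a Christ--David lattice $\hat\Delta$ for $\hat E$ with constant $\cd\le C_{n,d}$, and note that every $Q\in\hat\Delta$ with $c_Q\in B(0,R/2)$ and $\diam Q\le c_0 r_0$, for a small dimensional $c_0$, satisfies $2Q\subset B(0,3R/4)$, where $E=\hat E$. Since every cube occurring in the proofs of Theorems~\ref{thm:ur_implies_corona} and~\ref{thm:corona_alpha} is of this type, working with $\hat\Delta$ is harmless and costs at most a dimensional factor in $\tau_0$.

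The technical heart is to verify that each of the six implications
\[
\ref{cond:delta_ur}\Rightarrow\ref{cond:delta_cor}\Rightarrow\ref{cond:alpha_c}\Rightarrow\ref{cond:beta_1}\Rightarrow\ref{cond:reif}\Rightarrow\ref{cond:bmo}\Rightarrow\ref{cond:delta_ur}
\]
is local in the following precise sense: there is a dilation factor $\Lambda$ --- equal to $C_0$ for every implication, with the single exception of \ref{cond:delta_ur}$\Rightarrow$\ref{cond:delta_cor}, for which $\Lambda=C_0\delta^{-\theta'}$ with $\theta'\in(0,1/4d)$ exactly as in the proof of Theorem~\ref{thm:ur_implies_corona} --- such that if the hypothesis holds with constant $\delta$ at every $(y,t)$ with $y\in E\cap B(x,\Lambda r)$ and $0<t<\Lambda r$, then the conclusion holds at $(x,r)$ with constant $C_0\delta^{\theta_0}$. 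For \ref{cond:alpha_c}$\Rightarrow$\ref{cond:beta_1} and \ref{cond:beta_1}$\Rightarrow$\ref{cond:reif} this is immediate from the essentially pointwise nature of those estimates; for \ref{cond:reif}$\Rightarrow$\ref{cond:bmo} I would use that the Reifenberg topological disk theorem, the result of \cite{DTREIF} invoked in the proof of Theorem~\ref{thm:reif_bmo}, and the lower Ahlfors regularity bound derived there are all local, producing the topological disk structure, $d$-rectifiability, and the regularity estimate in $B(x,r)$ from $\delta$-Reifenberg flatness in $B(x,C_0 r)$. One must also check that the ``$\delta^{-1}$'' dilation appearing in the very definition of a $\delta$-Corona decomposition (Definition~\ref{defn:corona}) is not an additional obstruction: when that definition is applied with a Corona constant of size $\sim\delta^{\theta_0}\le\delta^{\theta'}$, the corresponding dilation is $\sim\delta^{-\theta_0}\le\delta^{-\theta'}$, already dominated by the one factor $\Lambda=C_0\delta^{-\theta'}$ being accounted for.

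Finally I would chain. From any source condition $\mathrm{X}$ among \ref{cond:delta_ur}--\ref{cond:bmo} one reaches any target $\mathrm{Y}$ after at most five further steps around the cycle, and since the cycle contains exactly one ``special'' edge, at most one of these steps carries the factor $\delta^{-\theta'}$. Composing the dilations of the previous paragraph, $\mathrm{Y}$ holds at $(x,r)$ with constant $C_0\delta^{\theta_0^{5}}$ provided $\mathrm{X}$ holds with constant $\delta$ at every $(\bar x,\bar t)$ with $|\bar x-x|<\Lambda_{\mathrm{tot}}\,r$, $0<\bar t<\Lambda_{\mathrm{tot}}\,r$, where $\Lambda_{\mathrm{tot}}\le C_0^{5}\delta^{-\theta'}$. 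Now take $x\in E\cap B(0,\tau_0 R)$ and $0<r<\delta\tau_0 r_0$; since $\theta'<1$ and $\delta<1$,
\[
\Lambda_{\mathrm{tot}}\,r < C_0^{5}\delta^{-\theta'}\cdot\delta\tau_0 r_0 = C_0^{5}\delta^{1-\theta'}\tau_0 r_0 \le C_0^{5}\tau_0 r_0,
\]
so, choosing the dimensional constant $\tau_0:=\big(2(1+C_0^{5})\big)^{-1}$ and using $r_0\le R$, we get $\bar t<r_0$ and $|\bar x|\le\tau_0 R+C_0^{5}\tau_0 r_0<R$; thus the hypothesis $\mathrm{X}$ is available everywhere it is needed, and $\mathrm{Y}$ holds at $(x,r)$ with constant $C_0\delta^{\theta_0^{5}}$. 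Since $x$, $r$ and $\mathrm{Y}$ were arbitrary, it only remains to shrink $\delta_0$ so that $C_0\delta^{\theta_0^{5}}\le\delta^{\theta_0^{5}/2}$ for $\delta<\delta_0$ and to relabel $\theta_0^{5}/2$ as $\theta_0$. The step I expect to be the real work is the local reformulation of each implication in the third paragraph: pinning down, arrow by arrow, exactly which ball and which range of scales of the hypothesis the existing proof consumes, and confirming that the lone $\delta^{-\theta'}$ blow-up from Theorem~\ref{thm:ur_implies_corona} is genuinely the worst that occurs, so that all dilated balls stay inside $B(0,R)$ and all dilated scales stay below $r_0$ once $r<\delta\tau_0 r_0$. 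None of this is deep, but it must be done carefully and separately for each implication.
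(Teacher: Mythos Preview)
Your proposal is correct and matches the paper's approach exactly: the paper does not give a formal proof of Theorem~\ref{thm:main_loc}, only the remark preceding its statement that each implication in the cycle is local with dilation $C_0$, except \ref{cond:delta_ur}$\Rightarrow$\ref{cond:delta_cor}, which carries the factor $C_0\delta^{-\theta'}$ with $\theta'\in(0,1/4d)$. Your explicit chaining, choice of $\tau_0$, and the device of extending to a globally regular $\hat E$ to fix a dyadic lattice supply precisely what the paper leaves to the reader; the only slip is the inequality $\delta^{\theta_0}\le\delta^{\theta'}$, which should read $\ge$ (since $\theta_0\le\theta'$ and $\delta<1$), though your intended conclusion $\delta^{-\theta_0}\le\delta^{-\theta'}$ is nonetheless correct.
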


Here, the phrase ``condition \ref{cond:delta_cor} holds for $x \in E \cap B(0, R)$ and $0 < r < r_0$ with constant $\delta$'' really means that for each $Q_0 \in \Delta$ with $\dist(Q_0, B(0, R)) \le r_0$ and $\diam Q_0 \le r_0$, $E$ admits $\delta$-Corona decompositions in $Q_0$. For the others, the meaning is self-explanatory: we just mean the conditions defining the statement are required to hold only for such points $x \in E$ and such scales $r > 0$ as opposed to uniformly.

In particular, this remark can be used to prove Theorem \ref{thm:cad} in the following way. 

\begin{proof}[Proof of Theorem \ref{thm:cad}]
Let $\Omega \subset \R^n$ be such a domain as in the statement of the Theorem, and for convenience write $\sigma \coloneqq \HD^{n-1}|_{\partial \Omega}$. Fix $\theta_0$, $\tau_0$, and $\delta_0$ coming from Theorem \ref{thm:main_loc} depending on $n$ and $C_E$, and let $\delta < \delta_0$.

Suppose first that $\Omega$ is a $\delta$-chord arc domain. Fix a ball $B(0, R)$ with $R >1$ large, and assume without loss of generality that $0 \in \partial \Omega$. Then we may find some $\rho >0$ small so that for  $x \in \partial \Omega \cap B(0, R)$ and $r \in (0, \rho)$,
\begin{align*}
b\beta_{\infty, \partial \Omega}(x,r) ,  \; \norm{\vec{n}}_{*}(B(x, \rho))  \le \delta.
\end{align*}
This first local Reifenberg condition on $\partial \Omega$ gives, by the proof of Theorem \ref{thm:reif_bmo}, that 
\begin{align*}
\sigma(B(x,r)) \ge (1 + C \delta)^{-1} r^{n-1},
\end{align*}
whenever $x \in B(0, \tau_0 R)$ and $r \le \tau_0 \rho$ for some constant $C>0$ depending only on $n$. Moreover, \cite[equation (2.18)]{BEGTZ22} implies the estimate $\abs{\langle \vec{n}_{x,r}, y-x\rangle} \le C r \delta^{1/2}$ whenever $x \in \partial \Omega \cap B(0, R)$, $r \in (0, \rho)$ and $y \in \partial \Omega \cap B(x, r)$. Here $C$ is a constant depending only on $n$ and $C_E$. Combined with the lower Ahlfors regularity condition above, this says exactly that $\partial \Omega$ satisfies condition \ref{cond:bmo} for points $x \in \partial \Omega \cap B(0, \tau_0 R)$ and scales $r \in (0, \tau_0 \rho)$ with constant $C \delta^{1/2}$. Theorem \ref{thm:main_loc} then implies that \ref{cond:alpha_c} holds for points $x \in \partial \Omega \cap B(0, \tau_0^2 R)$ and scales $r \in (0, \delta \tau_0^2 \rho)$ with constant $C \delta^{\theta_0/2}$. Since $R> 0$ is arbitrary, this shows that \ref{cond:cad} implies \ref{cond:loc_carl} with worse constant, $\delta^{\theta_0'}$ for some $\theta_0'$ small.

Conversely, assume that $\Omega$ satisfies \ref{cond:loc_carl}.
Again fix a ball $B(0, R)$ with $R >1 $ large, and assume without loss of generality that $0 \in \partial \Omega$.
By assumption, there is some $\rho >0$, so that the measure $\sigma$ satisfies the small constant Carleson measure condition
\begin{align*}
\sigma(B(x,r))^{-1} \int_{B(x,r)} \int_0^r \alpha_\sigma(y, s)^2 \; \dfrac{d\sigma(y)ds}{s} \le \delta,
\end{align*}
for all $x \in \partial \Omega \cap B(0, R)$ and $r \in (0, \rho)$.
Also, $\sigma(B(x,r)) \le (1+\delta)r^{n-1}$ for such $x$ and $r$.
In other words, $\partial \Omega$ satisfies condition \ref{cond:alpha_c} for $x \in B(0, R)$ and $r \in (0, \rho)$ with constant $\delta$, so Theorem \ref{thm:main_loc} implies that for $x \in \partial \Omega \cap B(0, \tau_0 R)$ and $r \in (0, \delta \tau_0 \rho)$, $\partial \Omega$ satisfies conditions \ref{cond:reif} and \ref{cond:bmo} with constant $\delta^{\theta_0}$. In other words, for each $x \in \partial \Omega \cap B(0, \tau_0 \rho)$ and $r \in (0, \delta \tau_0 \rho)$, we have 
\begin{align*}
b\beta_{\infty, \partial \Omega}(x,r) \le \delta^{\theta_0}, \text{  and  }  \norm{\vec{n} }_{*}B(x, \delta \tau_0 \rho) \le \delta^{\theta_0}.
\end{align*}
Since $R >0$ is arbitrary, then joint with the underlying assumptions on $\Omega$ made in the statement of the Theorem, this implies that $\Omega$ is a $\delta^{\theta_0}$-chord-arc domain. 
\end{proof}

\bibliographystyle{alpha}
\bibliography{bibl}

\end{document}